\newtheorem{theorem}{Theorem}[section]
\newtheorem{lemma}[theorem]{Lemma}
\newtheorem{proposition}[theorem]{Proposition}
\newtheorem{problem}[theorem]{Problem}
\newtheorem{corollary}[theorem]{Corollary}
\theoremstyle{definition}
\newtheorem{definition}[theorem]{Definition}
\newtheorem{example}[theorem]{Example}
\theoremstyle{remark}
\newtheorem{remark}[theorem]{Remark}
\numberwithin{equation}{section}
\newcommand{\bA}{\mathbb A}
\newcommand{\cA}{\mathcal A}
\newcommand{\cB}{\mathcal B}
\newcommand{\cM}{\mathcal M}
\newcommand{\cN}{\mathcal N}
\newcommand{\cE}{\mathcal E}
\newcommand{\cI}{\mathcal I}
\newcommand{\cU}{\mathcal U}
\newcommand{\cV}{\mathcal V}
\newcommand{\bU}{\mathbb U}
\newcommand{\sub}{\subset}
\newcommand{\eps}{\varepsilon}
\newcommand{\er}{\mathbb R}
\newcommand{\sm}{\setminus}
\newcommand{\con}{\mathfrak c}
\newcommand{\wt}{\widetilde}
\newcommand{\vf}{\varphi}
\newcommand{\ext}{\protect{{\rm Ext}}}
\newcommand{\lra}{\longrightarrow}
\newcommand{\wh}{\widehat}
\newcommand{\sfrown}{\!\smallfrown\!}
\begin{document}
\baselineskip=17pt

\title[Twisted sums of $c_0$ and $C(K)$-spaces]{Twisted sums of $c_0$ and $C(K)$-spaces:\\ A solution to the CCKY problem}

\author[A.\ Avil\'es]{ Antonio Avil\'{e}s}
\address{Departamento de Matem\'{a}ticas\\
Facultad de Matem\'{a}ticas\\ Universidad de Murcia\\ 30100 Espinardo (Murcia)\\
Spain} \email{avileslo@um.es}

\author[W.\ Marciszewski]{Witold Marciszewski}
\address{Institute of Mathematics\\
University of Warsaw\\ Banacha 2\newline 02--097 Warszawa\\
Poland} \email{wmarcisz@mimuw.edu.pl}

\author[G.\ Plebanek]{Grzegorz Plebanek*}
\address{Instytut Matematyczny\\ Uniwersytet Wroc\l awski\\ Pl.\ Grunwaldzki 2/4\\
50-384 Wroc\-\l aw\\ Poland} \email{grzes@math.uni.wroc.pl}

\thanks{
The first author has been supported by project MTM2017-86182-P (AEI, Spain and ERDF/FEDER, EU).
The third author has been supported by the grant 2018/29/B/ST1/00223 from National Science Centre, Poland.}
\thanks{* corresponding author}
\subjclass[2010]{Primary 46B25,  46B26, 46E15; Secondary 03E35,  54D40.}
\keywords{Twisted sum of Banach spaces, the space $c_0$, space of continuous functions, the continuum hypothesis.}

\begin{abstract}
We consider the class of Banach spaces $Y$ for which  $c_0$ admits a nontrivial twisted sum with $Y$.
We present a characterization of such spaces $Y$ in terms of properties of the $weak^\ast$ topology on $Y^\ast$. We prove that under the continuum hypothesis $c_0$ has a nontrivial twisted sum
with every space of the form $Y=C(K)$, where
$K$ is compact and not metrizable. This gives a consistent affirmative solution to a problem posed by Cabello, Castillo, Kalton and Yost.
\end{abstract}

\date{}
\maketitle

\section{Introduction}

A \emph{twisted sum} of Banach spaces $Y$ and $Z$ is a short exact sequence
\[ 0\lra Y\lra X \lra Z\lra 0, \]
where $X$ is a Banach space and the maps are bounded linear operators.
Such a twisted sum is called \emph{trivial} if the exact sequence splits, i.e.,
if the map $Y\lra X$ admits a left inverse (equivalently, if the map $X\lra Z$ admits a right inverse).
This is equivalent to saying that  the range of the map $Y\lra X$ is complemented in $X$; in this case, $X$ is isomorphic to $Y\oplus Z$.
We can, informally, say that $Y$ admits a nontrivial twisted sum with $Z$, if there exists a Banach space $X$ containing a uncomplemented copy $Y'$ of  $Y$  such that the quotient space  $X/Y'$ is isomorphic to $Z$.
The algebra of exact sequences and twisted sums is well-developed and found numerous applications in the Banach space theory, see
e.g.\ \cite{CCKY03,CC04,Ca01} and  recent monographs \cite{SIBS} and \cite{CC20}. Recall that $\ext(Z,Y)=0$ means that every twisted sum
of $Y$ and $Z$ is trivial.

By the classical Sobczyk theorem the space $c_0$ is separably injective, that is any isomorphic copy of $c_0$ is complemented in any separable superspace.
This implies $\ext(Y,c_0)=0$ for every separable Banach space $Y$. In particular, $\ext(C(K),c_0)=0$ whenever $K$ is compact and metrizable.
The following problem originated in \cite{CCKY03} and \cite{CCY00}; see also \cite{CC20} where it is called the CCKY problem.

\begin{problem}\label{i:1}
Given  a nonmetrizable compact space $K$, does there exist  a nontrivial twisted sum of $c_0$ and $C(K)$?
\end{problem}

There are several classes of nonmetrizable compacta for which  Problem \ref{i:1} has an affirmative answer, see  Castillo \cite{Ca16},
Correa and Tausk \cite{CT16}, Marciszewski and Plebanek \cite{MP18} and Correa \cite{Co18}.
However, it is proved in \cite{MP18} that if $\omega_1<\con$ and Martin's axiom holds then
$\ext(C(2^{\omega_1}), c_0)=0$ and  $\ext(C(K), c_0)=0$ for separable scattered compacta of height $3$ and weight $\omega_1$. The latter result has been recently generalized by Correa and Tausk \cite{CT18}. Hence, Problem \ref{i:1} has a consistent negative answer.

In this paper we present a number of theorems related to Problem \ref{i:1} and, more generally, to the class of Banach spaces $Y$
for which $\ext(Y,c_0)\neq 0$. Here is the list of our main results.

\begin{description}

\item[Theorem \ref{main}] We present  a characterization of Banach spaces $Y$ with $\ext(Y, c_0)= 0$ in terms of some properties of
the $weak^\ast$ topology on $Y^\ast$. The result has a variety of consequences, usually of a cardinal nature, enabling one to
 check that $\ext(Y, c_0)\neq 0$ for several types of  $Y$, see Section \ref{when} for details.
\item[Theorem \ref{ch:8}] Assuming the continuum hypothesis (CH),  $\ext(C(K), c_0)\neq 0$ for every nonmetrizable compact space $K$.
This is   a consistent affirmative solution to Problem \ref{i:1}.
The result  is a consequence of a sequence of auxiliary results in Section \ref{ch} based on Theorem \ref{main}.
\item[Theorem \ref{scattered:1}] We prove, without additional axioms,  that $\ext(C(K), c_0)\neq 0$ for every scattered compactum $K$ of finite
height and weight $\ge \con$ (in fact, of weight $\ge \mathrm{cf}(\con))$. Such a result was first obtained by
Castillo \cite{Ca16} under CH  and Correa \cite{Co18} under Martin's axiom.

\item[Section \ref{tsad}] The section offers a proof that $\ext(Y,c_0)\neq 0$ whenever $Y$ is a Banach space whose dual unit ball
contains suitably placed copies, in the weak$^\ast$ topology, of the Aleksandrov compactification of discrete sets.
This, in particular,  implies that $\ext(C(2^\kappa),c_0)\neq 0$ for cardinals $\kappa$ that are consistently  smaller than $\con$,
and $\ext(C(K),c_0)\neq 0$ whenever $K$ is a compact space that can be continuously mapped onto $[0,1]^\con$.
\end{description}

The basic idea behind Theorem \ref{main} is that twisted sums of $c_0$ and $Y$ are tightly connected with properties of compact spaces
that may be constructed by adding  a countable set of isolated points to  the dual unit ball in $Y^\ast$.

In the context of $C(K)$ spaces such an  approach was already
used in \cite{MP18}. The characterization given by \ref{main} connects the question on $\ext(Y,c_0)$ with a number of subtle properties of compact spaces
and some set-theoretic considerations.

The theorems presented in  of Section \ref{tsad} extend some results due to Correa and Tausk \cite{CT16} and use auxiliary results on almost disjoint families
of subsets of $\omega$ that are discussed in Section \ref{ad}. It turns out that for some compacta $K$ the question whether
$\ext(C(K),c_0)\neq 0$ has a highly set-theoretic nature. For example, $\ext(C(2^\kappa),c_0)\neq 0$ whenever there is a subset
of the real line of cardinality $\kappa$ having full outer measure.

It has become clear that Problem \ref{i:1} is undecidable in the usual set theory; however, the following remains open.

\begin{problem}\label{i:2}
Is there a consistent example of a compact space $K$ of weight $\ge\con$ for which $\ext(c_0,C(K))= 0$?
\end{problem}

One might, for example, consider  Problem \ref{i:2} for scattered compacta of countable height, such as the space described in section \ref{ap1}.

\section{Preliminaries}\label{prel}

If $K$ is a compact space then $C(K)$ is the familiar Banach space of continuous real-valued functions on $K$.
We usually identify $C(K)^*$ with the space $M(K)$ of signed Radon measures on $K$ of finite variation. $M_1(K)$ stands for the closed unit ball in
$M(K)$; given $r>0$, we denote $r\cdot M_1(K)$ by $M_r(K)$. Most often  $M_r(K)$ is  equipped with the $weak^\ast$ topology inherited from $C(K)^\ast$. Every
signed measure
$\mu\in M(K)$ can be written as  $\mu=\mu^+ -\mu^-$, where $\mu^+,\mu^-$ are nonnegative mutually singular measures.

We write  $\mu(f)$ for $\int_K f\;{\rm d}\mu$, unless some operations on the integral are needed (as in the standard lemma below).

\begin{lemma}\label{prel:-1}
If $V\sub K$ is an open set then the set of the form
\[U=\{\mu\in M(K): \mu^+(V)>a\},\]
is $weak^\ast$ open in $M(K)$.
\end{lemma}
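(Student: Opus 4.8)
The plan is to realize $U$ as a union of basic $weak^\ast$ open sets indexed by compact approximations to $V$ from inside, using the fact that $\mu^+(V)$ can be recovered as a supremum of integrals of continuous functions supported on compact subsets of $V$. First I would recall that for a Radon measure $\mu$ and an open set $V$ one has
\[
\mu^+(V)=\sup\Big\{\int_K f\dd\mu:\ f\in C(K),\ 0\le f\le 1,\ \{f>0\}\subseteq V\Big\}.
\]
Indeed, $\mu^+$ is the restriction of $\mu$ to a Borel set $P$ with $\mu^-(P)=0$ (the Hahn decomposition), so $\int f\dd\mu=\int_{P}f\dd\mu-\int_{K\sm P}f\dd\mu^-\le\mu^+(V)$ for each such $f$, giving ``$\ge$''; conversely, by inner regularity of $\mu^+$ pick a compact $L\subseteq V\cap P$ with $\mu^+(L)$ close to $\mu^+(V)$, by Urysohn's lemma choose $f\in C(K)$ with $0\le f\le 1$, $f\equiv 1$ on $L$ and $\{f>0\}$ contained in an open set $W$ with $L\subseteq W\subseteq\overline W\subseteq V$ (using normality of $K$), and then $\int f\dd\mu\ge\mu(L)-\mu^-(W\sm L)\ge\mu^+(L)-\mu^-(W)$; since $W$ can be taken with $\mu^-(W\sm L)$ small as well (again inner/outer regularity), this is close to $\mu^+(V)$, giving ``$\le$''.

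With this formula in hand, the conclusion is essentially immediate: for $\mu\in M(K)$ we have $\mu^+(V)>a$ if and only if there exists some $f\in C(K)$ with $0\le f\le 1$ and $\{f>0\}\subseteq V$ such that $\mu(f)>a$. Hence
\[
U=\bigcup\Big\{\{\nu\in M(K):\nu(f)>a\}:\ f\in C(K),\ 0\le f\le 1,\ \{f>0\}\subseteq V\Big\},
\]
and each set $\{\nu:\nu(f)>a\}$ is $weak^\ast$ open by the very definition of the $weak^\ast$ topology on $M(K)=C(K)^\ast$ (the functional $\nu\mapsto\nu(f)$ is $weak^\ast$ continuous). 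A union of $weak^\ast$ open sets is $weak^\ast$ open, so $U$ is $weak^\ast$ open, as claimed. One should note the inclusion ``$\subseteq$'' in the displayed description of $U$ also uses the supremum formula: if $\mu^+(V)>a$ then some admissible $f$ already has $\mu(f)>a$, so $\mu$ lies in the corresponding member of the union; the reverse inclusion is the ``$\le$'' half of the formula applied pointwise.

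The only genuinely delicate point is the regularity/Urysohn argument establishing the supremum formula — in particular making sure the error terms coming from $\mu^-$ are controlled simultaneously with the approximation of $\mu^+(V)$ by $\mu^+(L)$; this is routine measure theory on a compact (hence normal) space, but it is where all the work is. Everything after that is formal. I expect the author's proof to simply quote this supremum representation, perhaps in the slightly slicker form $\mu^+(V)=\sup\{\mu(f): f\in C(K),\ 0\le f\le\chi_V\}$ where $f\le\chi_V$ is shorthand for $0\le f\le 1$ and $f$ vanishing off $V$, and then observe that $U$ is the corresponding union of $weak^\ast$-open halfspaces.
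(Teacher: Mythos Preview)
Your proof is correct and follows essentially the same approach as the paper: both arguments show that $U=\bigcup_f\{\nu:\nu(f)>a\}$, the union ranging over continuous $f:K\to[0,1]$ vanishing off $V$, and the substantive work in each case is the regularity/Urysohn step producing, for a given $\mu$ with $\mu^+(V)>a$, an admissible $f$ with $\mu(f)>a$. The only cosmetic difference is that the paper approximates both $V^+$ and $V^-$ from inside by closed sets and builds a single Urysohn function separating them, whereas you approximate only the positive part by a compact set and control $\mu^-$ on its neighbourhood via outer regularity; these are interchangeable variants of the same idea.
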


\begin{proof}
	By the definition of the weak$^\ast$ topology, each set of the form $W_f=\{\mu\in M(K) : \mu(f)>a\}$ is weak$^\ast$-open for any $f\in C(K)$. Therefore it is enough to check that
\[ U = \bigcup\left\{W_f : f\in C(K), f:K\longrightarrow [0,1] \text{ and } f|_{K\setminus V} = 0\right\}.\]
	Let $V = V^+ \cup V^-$ be a decomposition of $V$ into two Borel sets where the positive and negative part of $\mu$ are concentrated, respectively.
 For the inclusion $[\supseteq]$, if $\mu\in W_f$ for some $f$ as above, then
 \[a<\mu(f) = \int_{V} f\;{\rm d}\mu= \int_{V^+} f\;{\rm d}\mu + \int_{V^-} f\;{\rm d}\mu
    \le \int_{V^+} f\;{\rm d}\mu\le  \mu^+(V).\]
	For the reverse inclusion $[\subseteq]$ fix $\mu\in U$. By the regularity of the measure we can find closed sets $F^+\subset V^+$ and $F^-\subset V^-$ such that $|\mu|(V^\sigma\setminus F^\sigma)<(\mu^+(V)-a)/4$ for $\sigma\in\{+,-\}$. Take  a continuous function $f:K\longrightarrow [0,1]$ such that $f|_{F^+} = 1$, $f|_{F^-}=0$ and $f|_{K\setminus V}=0$. Then $\mu\in W_f$ because
	\[ \mu(f)\ge \int_{F^+} f\;{\rm d}\mu + \int_{F^-} f\;{\rm d}\mu
 -(\mu^+(V)-a)/2 > \mu({V^+})-\frac{3}{4}(\mu^+(V)-a)>a.\]
\end{proof}

The symbol $P(K)$ denotes the subspace of $M_1(K)$ consisting of all probability measures; given $x\in K$, $\delta_x\in P(K)$ is the Dirac measure, a point mass concentrated at the point $x$. Note that $K$ is homeomorphic to the subspace $\Delta_K=\{\delta_x: x\in K\}$ of  $P(K)$.

\subsection{Operators}
We recall the following standard fact for the future reference.

\begin{lemma}\label{prel:0}
Let $Y_0$ be a complemented subspace of a Banach space $Y$.
If $\ext(Y_0,c_0)\neq0$ then $\ext(Y,c_0)\neq 0$.
In particular, if $L\sub K$ is a retract of a compact space $K$ and $\ext(C(L),c_0)\neq 0$ then $\ext(C(K), c_0)\neq0$.
\end{lemma}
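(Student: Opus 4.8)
This is a standard fact in the homological theory of Banach spaces, and the plan is to give the direct construction. Since $\ext(Y_0,c_0)\neq 0$, fix a nontrivial twisted sum $0\to c_0\to Z_0\to Y_0\to 0$; write $j:c_0\to Z_0$ for the embedding and $q:Z_0\to Y_0$ for the quotient map, so that $q$ admits no bounded linear right inverse. Using that $Y_0$ is complemented in $Y$, write $Y=Y_0\oplus Y_1$, where $Y_1$ is the kernel of a bounded projection of $Y$ onto $Y_0$. I would then ``pad'' the original sequence with $Y_1$: set $Z=Z_0\oplus Y_1$, let $\iota:c_0\to Z$ be $\iota(c)=(j(c),0)$, and let $\varphi:Z\to Y$ be $\varphi(z,y_1)=(q(z),y_1)$. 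It is immediate that $\varphi$ is a bounded linear surjection and that $\ker\varphi=\{(j(c),0):c\in c_0\}=\iota(c_0)$, so $0\to c_0\to Z\to Y\to 0$ is a twisted sum of $c_0$ and $Y$. (Conceptually, $Z$ is the pullback of $q$ along the projection $Y\to Y_0$.)

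The only step requiring an argument is that this twisted sum is nontrivial, and that is where I would be slightly careful. Suppose $r:Y\to Z$ is a bounded linear map with $\varphi\circ r=\mathrm{id}_Y$, and write $r(y)=(s(y),t(y))$ with $s:Y\to Z_0$ and $t:Y\to Y_1$ bounded linear. Then $\varphi\circ r=\mathrm{id}_Y$ forces $t(y_0,y_1)=y_1$ and $q(s(y_0,y_1))=y_0$ for all $(y_0,y_1)\in Y_0\oplus Y_1$. Restricting $s$ to $Y_0$ (identified with $Y_0\oplus\{0\}$) yields a bounded linear map $s_0:Y_0\to Z_0$ with $q\circ s_0=\mathrm{id}_{Y_0}$, i.e.\ a bounded right inverse of $q$, contradicting the nontriviality of the original twisted sum. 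Hence $\ext(Y,c_0)\neq 0$.

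For the last assertion I would let $\rho:K\to L$ be a continuous retraction and note that $\rho^*:C(L)\to C(K)$, $\rho^*(f)=f\circ\rho$, is an isometric embedding, while $P:C(K)\to C(K)$, $P(g)=(g|_L)\circ\rho$, is a norm-one linear projection onto $\rho^*(C(L))$: indeed, if $g=f\circ\rho$ then $g|_L=f$ since $\rho|_L=\mathrm{id}_L$, whence $P(g)=g$. Thus $C(K)$ has a complemented subspace isometric to $C(L)$, and the first part applies with $Y=C(K)$ and $Y_0=\rho^*(C(L))$. I do not expect any real obstacle here; the only point calling for attention is the verification that a hypothetical bounded linear section of the padded quotient map $\varphi$ restricts to a section of the original map $q$.
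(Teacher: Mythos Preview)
Your proof is correct and follows essentially the same route as the paper: the paper also writes $Y\cong Y_0\oplus A$ and observes that padding the nontrivial sequence $0\to c_0\to X\to Y_0\to 0$ by $A$ yields a nontrivial sequence $0\to c_0\to X\oplus A\to Y_0\oplus A\to 0$, and then derives the second assertion from exactly the same retraction/projection argument $Pf=(f|_L)\circ\rho$. You have simply supplied the verification (that a section of the padded quotient restricts to a section of the original) which the paper leaves implicit.
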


\begin{proof}
The first assertion is easy, because if 
$0\lra c_0 \lra X \lra Y_0 \lra 0$ is a nontrivial short exact sequence, then the sequence
 $ 0 \lra c_0 \lra X\oplus A \lra Y_0 \oplus A\lra 0$ is also nontrivial.


The second assertion follows from the first one and the fact that if $r:K\lra L$ is a retraction then $C(L)$ is isometric to a complemented subspace
$X=\{g\circ r: g\in C(L)\}$ of $C(K)$, where the projection $P:C(K)\lra X$ is given by $Pf=(f|L)\circ r$ for $f\in C(K)$.
\end{proof}

If $T:X\lra Y$ is a bounded linear operator between Banach spaces then $T^\ast:Y^\ast\lra X^\ast$ is the conjugate operator
given by $T^\ast y^\ast(x)=y^\ast(Tx)$. Recall that if $T$ is surjective then $T^*[Y^*]=\ker(T)^\perp $.


\subsection{Countable discrete extensions and compactifications of $\omega$}
If $K$ is a compact space then we call a compact space $L\supset K$  a {\em countable discrete extension} of $K$ if
$L\sm K$ is a countable infinite discrete set. Whenever possible, we identify $L\sm K$ with $\omega$.
Note that if $\gamma\omega$ is some compactification of $\omega$ then $\gamma\omega$ is a countable discrete extension
of its remainder $\gamma\omega\!\sm\!\omega$.

We write $\omega^\ast$ for the remainder of the \v{C}ech-Stone compactification $\beta\omega$ of natural numbers.
We shall frequently use  the following  fact, see  \cite[3.5.13]{En}.

\begin{lemma}\label{prel:2}
If $f:\omega^\ast\lra K$ is a continuous surjection then $K$ is  homeomorphic to the remainder of a compactification
$\gamma\omega$ and $f$ can be extended to a continuous function $\beta\omega\lra\gamma\omega$.
\end{lemma}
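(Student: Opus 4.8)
The plan is to realise $\gamma\omega$ as a quotient of $\beta\omega$. Define an equivalence relation $\sim$ on $\beta\omega$ by putting $p\sim q$ if and only if either $p=q$, or $p,q\in\omega^\ast$ and $f(p)=f(q)$; thus $\sim$ collapses each fibre $f^{-1}(k)$, $k\in K$, to a single point while fixing every $n\in\omega$. Set $\gamma\omega=\beta\omega/\!\sim$ and let $q:\beta\omega\lra\gamma\omega$ be the quotient map. I claim $\gamma\omega$ is the desired compactification of $\omega$ and $q$ the desired extension of $f$.

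First I would check that $R=\{(p,q)\in\beta\omega\times\beta\omega:p\sim q\}$ is closed. Indeed $R=\Delta_{\beta\omega}\cup(f\times f)^{-1}(\Delta_K)$, where $\Delta_{\beta\omega}$ and $\Delta_K$ denote the diagonals; since $K$ is Hausdorff, $\Delta_K$ is closed, hence $(f\times f)^{-1}(\Delta_K)$ is closed in $\omega^\ast\times\omega^\ast$, which is in turn closed in $\beta\omega\times\beta\omega$ because $\omega^\ast$ is closed in $\beta\omega$, while $\Delta_{\beta\omega}$ is closed by Hausdorffness of $\beta\omega$. By the standard theorem that the quotient of a compact Hausdorff space by a closed equivalence relation is compact Hausdorff, $\gamma\omega$ is compact Hausdorff and $q$ is a closed map.

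Next I would carry out the remaining identifications using that $\omega$ is a $\sim$-saturated open subset of $\beta\omega$: for every $A\sub\omega$ one has $q^{-1}(q(A))=A$, whence $q(\omega)$ is open in $\gamma\omega$, every point of $q(\omega)$ is isolated, and $q|_\omega:\omega\lra q(\omega)$ is a homeomorphism onto an open subspace, which is dense because $\omega$ is dense in $\beta\omega$ and $q$ is a continuous surjection. Since no point of $\omega$ is $\sim$-related to a point of $\omega^\ast$, the sets $q(\omega)$ and $q(\omega^\ast)$ are disjoint, so the remainder $\gamma\omega\sm q(\omega)$ equals $q(\omega^\ast)$. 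Finally $q|_{\omega^\ast}:\omega^\ast\lra q(\omega^\ast)$ is a continuous surjection from a compact space onto a Hausdorff space whose fibres coincide with those of $f$; comparing the two quotient maps $f$ and $q|_{\omega^\ast}$ of the compactum $\omega^\ast$ yields a homeomorphism $h:K\lra q(\omega^\ast)$ with $h\circ f=q|_{\omega^\ast}$. Identifying $q(\omega)$ with $\omega$ via $(q|_\omega)^{-1}$ and $q(\omega^\ast)$ with $K$ via $h^{-1}$, the space $\gamma\omega$ becomes a compactification of $\omega$ with remainder $K$, and $q$ becomes a continuous extension of $f$ to all of $\beta\omega$.

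The only point that needs genuine (though routine) care is the closedness of $R$, which is exactly where the Hausdorffness of $K$ is used; everything else is bookkeeping with saturated sets together with the cited quotient theorem for compacta, and no set-theoretic assumptions enter.
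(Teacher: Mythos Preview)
Your proof is correct. The paper does not give its own argument for this lemma; it simply cites Engelking \cite[3.5.13]{En}. Your construction---defining a closed equivalence relation on $\beta\omega$ that collapses the fibres of $f$ on $\omega^\ast$ and passing to the quotient---is precisely the standard way this fact is established (and is essentially what one finds in Engelking). The verification that $R$ is closed, that $\omega$ is saturated, and that the induced map on the remainder is a homeomorphism are all handled cleanly; nothing is missing.
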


The space $\omega^\ast$ is projective with respect to metrizable compacta, i.e.\ the following holds, see \cite[Corollary 5.24]{SIBS}.

\begin{theorem}\label{prel:2.5}
If $K_1,K_2$ are metrizable compacta and $g:K_1\lra K_2$ is a continuous surjection then for every continuous map $\vf:\omega^\ast\lra K_2$
there is a continuous map $\psi:\omega^\ast\lra K_1$ such that the following diagram commutes
$$\xymatrix{K_1\ar[rr]^g&&K_2\\
\omega^\ast\ar@{.>}_{\psi}[u]  \ar_{\vf}[urr]&& }$$
\end{theorem}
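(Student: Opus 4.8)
The plan is to produce the lift $\psi$ as the restriction to $\omega^\ast$ of the Stone extension of a suitably chosen sequence in $K_1$. Two standard facts drive the argument: $\omega^\ast$ is $C^\ast$-embedded in $\beta\omega$ (it is a closed subset of the compact, hence normal, space $\beta\omega$, so Tietze's theorem applies coordinatewise to maps into a cube), and a continuous map into a compact Hausdorff space commutes with ultrafilter limits, i.e.\ $h\bigl(p\text{-}\lim_n c_n\bigr)=p\text{-}\lim_n h(c_n)$; here, for an ultrafilter $p\in\omega^\ast$ and a sequence $(c_n)$ in a compact Hausdorff space, $p\text{-}\lim_n c_n$ denotes the unique point each of whose neighbourhoods contains $c_n$ for a $p$-large set of indices.

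First I fix an embedding $K_2\subseteq[0,1]^\omega$ together with a metric $d$ on $[0,1]^\omega$, and, using Tietze coordinatewise, extend $\vf$ to a continuous map $\Phi\colon\beta\omega\to[0,1]^\omega$ with $\Phi|_{\omega^\ast}=\vf$. The set of cluster points of the sequence $(\Phi(n))_{n\in\omega}$ is exactly $\{p\text{-}\lim_n\Phi(n):p\in\omega^\ast\}=\Phi[\omega^\ast]=\vf[\omega^\ast]\subseteq K_2$; as $K_2$ is closed, this forces $d(\Phi(n),K_2)\to0$ — otherwise $A=\{n:d(\Phi(n),K_2)\ge\eps\}$ is infinite for some $\eps>0$, and a free ultrafilter $p$ with $A\in p$ would yield a cluster point $p\text{-}\lim_n\Phi(n)$ at distance $\ge\eps$ from $K_2$. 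Hence I may choose $a_n\in K_2$ with $d(a_n,\Phi(n))\to0$, and then $p\text{-}\lim_n a_n=\Phi(p)=\vf(p)$ for every $p\in\omega^\ast$. Since $g$ maps $K_1$ onto $K_2$, I pick $b_n\in g^{-1}(a_n)\subseteq K_1$ for each $n$, let $\beta b\colon\beta\omega\to K_1$ be the Stone extension of $n\mapsto b_n$, and set $\psi:=(\beta b)|_{\omega^\ast}$. Then $\psi\colon\omega^\ast\to K_1$ is continuous, and for $p\in\omega^\ast$
\[ g(\psi(p))=g\bigl(p\text{-}\lim_n b_n\bigr)=p\text{-}\lim_n g(b_n)=p\text{-}\lim_n a_n=\vf(p),\]
so $g\circ\psi=\vf$ and the diagram commutes.

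The only step that genuinely uses the hypotheses is the replacement of $\Phi(n)$ by a point $a_n\in K_2=g[K_1]$ without disturbing the $\omega^\ast$-limits; this relies on the compactness and, crucially, the \emph{metrizability} of $K_2$, since $\Phi(n)$ need not lie in $K_2$ and for a non-metrizable remainder no such approximation exists — indeed the conclusion then fails. Everything else — the existence of $p$-limits, their preservation by continuous maps, and the continuity of the Stone extension — is routine.
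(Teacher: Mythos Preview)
Your proof is correct. The paper itself does not prove this theorem but merely cites \cite[Corollary 5.24]{SIBS}, so there is no ``paper's own proof'' to compare against; you have supplied a complete, self-contained argument where the paper gives none.

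A couple of minor remarks. First, your proof actually uses metrizability only for $K_2$: the compactness of $K_1$ suffices for the Stone extension $\beta b\colon\beta\omega\to K_1$ to exist, so you have in fact shown a slightly stronger statement than the one asserted. Second, the detour through an extension $\Phi\colon\beta\omega\to[0,1]^\omega$ followed by the approximation $a_n\in K_2$ is perfectly fine, but one can shorten it: since $K_2$ is metrizable compact, the map $g\colon K_1\to K_2$ admits a Borel (even Baire-class-one) section $s\colon K_2\to K_1$, and one may write $\psi(p)=p\text{-}\lim_n s(a_n)$ directly once $a_n$ is produced; your route via arbitrary preimages $b_n$ is equivalent and arguably more elementary. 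Either way, the essential mechanism is the one you identify at the end: because $K_2$ is second countable, the values $\Phi(n)$ can be pushed back into $K_2$ without changing the ultrafilter limits, and then lifted through $g$.
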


If $(x_n)$ is a sequence in a compact space $K$ and $\cU$ is a non-principal ultrafilter on $\omega$ then
$x=\lim_{n\to\cU} x_n$ is the unique point in $K$ such that $\{n\in\omega: x_n\in U\}\in\cU$ for  every open neighborhood $U$ of $x$.
Recall that every accumulation point of $x_n$'s can be written as $ \lim_{n\to\cU} x_n$ for  some ultrafilter $\cU$.

\subsection{Almost disjoint families and Aleksandrov-Urysohn compacta} \label{prel:adf}
A topological space $X$ is \emph{scattered} if no nonempty subset $A\subseteq X$ is dense-in-itself.
For an ordinal $\alpha$, $X^{(\alpha)}$ denotes the $\alpha$th
Cantor-Bendixson derivative of the space $X$. For a scattered
space $X$, the scattered height of $X$ is
${ht(X)}=\min\{\alpha: X^{(\alpha)} =
\emptyset\}$.

We write $\bA(\kappa)$ for the (Aleksandrov) one-point compactification
of a discrete space of cardinality $\kappa$.

Recall that a family $\cA$ of infinite subsets of $\omega$ is almost disjoint if $A\cap B$ is finite for any distinct $A,B\in \cA$.
To every almost disjoint family $\cA$ one can associate an Aleksandrov-Urysohn  compactum $\bA\bU(\cA)$ of height 3.
That space may be simply defined
as the Stone space of the algebra of subsets of $\omega$ generated by $\cA$ and all finite sets.
In other words,
\[\bA\bU(\cA)=\omega\cup \{A:A\in\cA\}\cup\{\infty\},\]
where points in $\omega$ are isolated, basic open neighborhoods of a given point $A$ are of the form $\{A\}\cup (A\sm F)$ with $F\sub \omega$ finite, and $\bA\bU(\cA)$ is the one point compactification of
the locally compact space $\omega\cup \{A:A\in\cA\}$, where $\infty$ is the point at
infinity.

Recall that combinatorial properties of $\cA$ are often reflected by topological properties of $\bA\bU(\cA)$, see
 the survey paper  Hru\v{s}\'ak \cite{Hr14}. For instance, if
$\cA$ is a maximal almost disjoint family then $\infty$ lies in the closure of ${\omega}$ but no subsequence of $\omega$ converges to $\infty$
(so the resulting space is not Fr\'echet-Urysohn).

The class of spaces $\bA\bU(\cA)$ is usually associated with the names of Mr\'owka, Isbell, or Franklin; however,
 such compacta were already considered by Aleksandrov and Urysohn \cite{AU29} and there seems to be a good reason
to call them  Aleksandrov-Urysohn compacta, cf.\ \cite{MP09}.

\subsection{Parovi\v{c}enko's theorem}
Parovi\v{c}enko's theorem states that every compact space of weight $\le \omega_1$ is a continuous image of the space $\omega^\ast$, see
\cite{Pa63}. This implies that every Banach space of density $\le\omega_1$ can be isometrically embedded into $C(\omega^\ast)$ which is an isometric copy of
the classical Banach space $\ell_\infty/c_0$. In particular, under CH the space $C(\omega^\ast)$ is universal for the class of Banach spaces of density
not exceeding $\con$; Koszmider \cite{Ko15} offers a detailed discussion on the existence of universal object in several classes of Banach
spaces.

\subsection{Some classes of compacta}

Let us recall that a compact space $K$ is {\em Eberlein compact} if $K$ is homeomorphic to a weakly compact subset of a Banach space.
There are well-studied much wider classes of Corson and Valdivia compacta. Given a cardinal number $\kappa$,  the $\Sigma$-product $\Sigma(\er^\kappa)$ of real lines is  the subspace of
$\mathbb{R}^\kappa$ consisting of functions with countable support.
A compactum $K$ is  {\em Corson compact} if it can be
embedded into some $\Sigma(\er^\kappa)$;
$K$ is {\em Valdivia compact} if for some $\kappa$ there is an embedding $g:K\lra\er^\kappa$ such that
$g(K)\cap\Sigma(\er^\kappa)$ is dense in the image,
see Negrepontis \cite{Ne84}, Argyros, Mercouraks and Negrepontis \cite{AMN} and Kalenda \cite{Ka00}.

Denote by
$B_1(\omega^\omega)$ the space of first  Baire class functions $\omega^\omega\lra \er$, equipped with the pointwise topology. A compact space $K$ is
said to be {\em Rosenthal compact} if $K$ can be topologically embedded into $B_1(\omega^\omega)$, see \cite{Mar03} for basic properties
of Rosenthal compacta and further references.

\section{Twisted sums and discrete extensions}\label{ts}

If $K$ is a compact space, $L$ is a countable discrete extension of $K$ and $Z$ is another topological superspace of $K$, we say that $L$ {\em can be realized inside} $Z$, if the inclusion map $K\longrightarrow Z$ extends to a homeomorphic embedding $L\longrightarrow Z$. If $T:Y\lra \ell_\infty/c_0$ is a bounded operator then we say that $T$ can be lifted to $\ell_\infty$ if there is a bounded operator
$\wt{T}:Y\lra\ell_\infty$ closing the  following diagram
$$\xymatrix{
	Y\ar[rr]^T \ar@{.>}_{\wt{T}}[d]&&\ell_\infty/c_0\\
	\ell_\infty  \ar_{Q}[urr]&& }$$
where $Q:\ell_\infty\lra \ell_\infty/c_0$ is the quotient operator.

The main results of this section are Theorem~\ref{main} and~\ref{main2} below. The equivalence of $(i)$ and $(v)$ in Theorem~\ref{main} is known, one can find a proof of it using homological tools in
\cite[Proposition 1.4.f] {CG97}. The other conditions $(ii)$, $(iii)$ and $(iv)$ can be viewed as topological counterparts of $(v)$.

\begin{theorem}\label{main}
For an infinite dimensional Banach space $Y$ the following are equivalent:
\begin{enumerate}[(i)]
	\item $\ext(Y,c_0) = 0$;
	\item every countable discrete extension of $(B_{Y^\ast},weak^\ast)$ can be realized inside  $(Y^\ast,weak^\ast)$;
	\item every countable discrete extension of any compact subset of $(Y^\ast,weak^\ast)$ can be realized inside $(Y^\ast,weak^\ast)$;
	\item every continuous function $\omega^\ast\longrightarrow (Y^\ast,weak ^\ast)$ extends to a continuous function
$\beta\omega\longrightarrow (Y^\ast,weak^\ast)$;
	\item every bounded operator $T:Y\lra \ell_\infty/c_0$ can be lifted to $\ell_\infty$.
\end{enumerate}	
\end{theorem}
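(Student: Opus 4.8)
The plan is to prove a cycle of implications. The equivalence $(i)\Leftrightarrow(v)$ I will take as known (it is attributed above to \cite[Proposition 1.4.f]{CG97}), so the real work is to connect the topological conditions $(ii)$, $(iii)$, $(iv)$ to each other and to $(v)$. I would organize it as $(v)\Rightarrow(iv)\Rightarrow(iii)\Rightarrow(ii)\Rightarrow(v)$, with $(ii)\Leftrightarrow(iv)$ being essentially forced by Parovi\v{c}enko/Lemma~\ref{prel:2}. The key translation device throughout is the identification of a bounded operator $T\colon Y\to\ell_\infty/c_0 = C(\omega^\ast)$ with a weak$^\ast$-continuous map $\varphi = T^\ast|_{\Delta}\colon \omega^\ast \cong \Delta_{C(\omega^\ast)} \subset M(\omega^\ast)\to (Y^\ast, weak^\ast)$ (rescaling by $\|T\|$), and correspondingly an operator $\widetilde T\colon Y\to\ell_\infty = C(\beta\omega)$ with a weak$^\ast$-continuous map $\beta\omega\to (Y^\ast,weak^\ast)$. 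Under this dictionary, $Q\widetilde T = T$ says exactly that the map $\beta\omega\to Y^\ast$ restricts on $\omega^\ast$ to $\varphi$; this gives $(v)\Leftrightarrow(iv)$ almost immediately once one checks that every weak$^\ast$-continuous map $\omega^\ast\to Y^\ast$ has image in some $B_{Y^\ast}$ scaled, i.e. is bounded, which follows from compactness, and that lifting the operator is the same as extending the map (using the Tietze-type fact that operators into $\ell_\infty$ correspond to bounded families of functionals, i.e. to maps $\beta\omega\to Y^\ast$).

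For $(iv)\Rightarrow(iii)$: given a compact $H\subseteq(Y^\ast,weak^\ast)$ and a countable discrete extension $L = H\cup\omega$, I want to realize $L$ inside $Y^\ast$. The extension $L$ is itself a compactification $\gamma\omega$ of $\omega$ with remainder $H$ (this is the observation in the Preliminaries that $\gamma\omega$ is a countable discrete extension of its remainder), but to apply $(iv)$ I need a continuous map out of $\omega^\ast$, so I first pull $H$ back: since $\omega^\ast$ maps onto no small spaces automatically, instead I use Lemma~\ref{prel:2} in reverse — the inclusion $H\hookrightarrow Y^\ast$ need not factor through $\omega^\ast$, so the right move is: take the map $e\colon\omega\to Y^\ast$ enumerating the isolated points of $L$; this is a bounded sequence in $(Y^\ast,weak^\ast)$, hence extends to $\beta e\colon\beta\omega\to (Y^\ast,weak^\ast)$ (as $(Y^\ast, weak^\ast)$ restricted to a ball is compact Hausdorff and $\omega$ is discrete — wait, that only gives a map into the ball's Stone–Čech, so more carefully: the closure of $e(\omega)$ in $Y^\ast$ is weak$^\ast$-compact, call it $H'$, and the identity $\omega\to\omega$ extends to $\beta\omega\to H'\cup\omega = $ some compactification). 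Restricting to $\omega^\ast$ gives $\varphi = \beta e|_{\omega^\ast}\colon\omega^\ast\to Y^\ast$ whose image is exactly the set of weak$^\ast$-cluster points of $(e(n))$. The point is that $L$ is the quotient of $\beta\omega$ identifying $\omega^\ast$ via the original extension structure, and I need that the weak$^\ast$-cluster-point map agrees with the map $L\setminus\omega = H\to Y^\ast$; this requires $H$ to consist precisely of cluster points of $(e(n))$ in the extension $L$, which is automatic since $\omega$ is dense in $L$. Then $(iv)$ extends $\varphi$ to $\beta\omega\to Y^\ast$, and the induced map on the quotient $L$ is the desired embedding once we verify injectivity — this is where I expect to pay attention: we must arrange the enumeration/extension so that distinct points of $L$ go to distinct points, using that the points of $H$ were already distinct in $Y^\ast$ and the isolated points are new.

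The implications $(iii)\Rightarrow(ii)$ is trivial (take $H = B_{Y^\ast}$), and $(ii)\Rightarrow(v)$ runs the $(iv)$-argument backwards together with the reduction from $\beta\omega$ to a general countable discrete extension: given $T\colon Y\to C(\omega^\ast)$, the map $\varphi = T^\ast$ carried on $\omega^\ast$ gives, together with $\omega$, a countable discrete extension of the compact set $\varphi(\omega^\ast)\subseteq\|T\|B_{Y^\ast}$; rescale into $B_{Y^\ast}$, apply $(ii)$ to realize it inside $Y^\ast$, and read off from the realization a bounded family of functionals indexed by $\omega$ extending $\varphi$, i.e. an operator $\widetilde T\colon Y\to\ell_\infty$ with $Q\widetilde T = T$. \textbf{The main obstacle} I anticipate is bookkeeping injectivity and the boundedness normalizations in the back-and-forth between "countable discrete extension of a compact $H$" and "continuous map from $\omega^\ast$": one must be careful that an abstract countable discrete extension $L$ of $H$ need not be metrizable or even have $H$ of weight $\le\omega_1$, so one cannot always write $L$ as a quotient of $\beta\omega$ in a canonical way — the correct statement is that $L$, being a compactification of $\omega$ with remainder $H$, is always a quotient of $\beta\omega$ (via the extension to $\beta\omega$ of $\omega\hookrightarrow L$), and it is this quotient map, composed with a realization map, that must be checked to land homeomorphically in $Y^\ast$. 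Making that composition an embedding rather than merely continuous is the crux, and it is handled by noting that a continuous bijection from a compact space to a Hausdorff space is a homeomorphism, so it suffices to get a continuous \emph{bijection} $L\to\varphi(\omega^\ast)\cup\omega\subseteq Y^\ast$ extending $\mathrm{id}_\omega$, which is exactly what the realization hypothesis delivers.
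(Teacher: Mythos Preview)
Your overall plan is close to the paper's: the operator/map dictionary for $(iv)\Leftrightarrow(v)$ and the use of Lemma~\ref{prel:2} for $(iii)\Leftrightarrow(iv)$ are exactly what the authors do. The one structural difference is that you outsource $(i)\Leftrightarrow(v)$ to \cite{CG97}, while the paper proves $(i)\Leftrightarrow(ii)$ directly by building, from a discrete extension $L=B_{Y^\ast}\cup\omega$, an explicit short exact sequence $0\to c_0\to X\to Y\to 0$ with $X\subset C(L)$, and conversely reading off a realization from a splitting. Either route is fine.

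There is, however, a real gap in your $(iv)\Rightarrow(iii)$ step, and you have in fact put your finger on it without resolving it. Starting from $L=H\cup\omega$, you correctly pass to the closure $K_0=\overline{\omega}^{\,L}$, get $f\colon\omega^\ast\to K_0\setminus\omega\subset Y^\ast$, and extend by $(iv)$ to $\hat f\colon\beta\omega\to Y^\ast$. The candidate realization is then $h\mapsto h$ on $H$ and $n\mapsto\hat f(n)$ on $\omega$. But nothing prevents $\hat f(n)\in H$ or $\hat f(n)=\hat f(m)$ for $n\neq m$, so you do not yet have an injection, and your final sentence (``the realization hypothesis delivers'' the bijection) is circular here --- $(iii)$ is precisely what you are trying to establish. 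The paper's fix is a small perturbation trick stated at the very start of the proof: by the Josefson--Nissenzweig theorem, one can add to any bounded sequence $(y_n^\ast)$ a weak$^\ast$-null sequence of fixed norm (plus a tiny correction) to obtain pairwise distinct $x_n^\ast$ lying outside any prescribed ball, with $x_n^\ast-y_n^\ast\to 0$ weak$^\ast$. Applying this to $(\hat f(n))$ gives points that are distinct and avoid $H$, while preserving the ultrafilter limits needed for continuity. You should insert this device; without it the implication does not close.
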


\begin{proof}
We start with the following general remark:
\medskip

{\sc Remark.} Let $(y^\ast_n)$ be a bounded sequence in $Y^\ast$. Then, for any $a>0$, there is a sequence $(x^\ast_n)$ of pairwise distinct points of $Y^\ast$, lying outside $a\cdot B_{Y^\ast}$, and such that  $x_n^\ast-y_n^\ast$ converge to $0$ in the  $weak^\ast$ topology.
\medskip

Indeed, let $\|y^\ast_n\|\le c$ for all $n$.
By the Josefson-Nissenzweig theorem, we can pick a $weak^\ast$ null sequence $(z_n^\ast)$ in $Y^\ast$ such that $\|z_n^\ast\| = c+a+1$ for all $n$, see
Diestel \cite[Chapter 12]{Di84}.  Then we may choose vectors $u_n^\ast$ with $1>\|u_n^\ast\|\lra 0$, so that  the vectors
\[ x_n^\ast=y^\ast_n+z_n^\ast+u_n^\ast,\]
are pairwise distinct.
Then it is clear that the sequence $(x^\ast_n)$ has the required properties. Note that we also have the estimate $\|x^\ast_n\|\le 2c+a+2$.
\medskip


The implication   $(iii) \lra (ii)$ is obvious; for  the reverse implication
take $K\sub B_{Y^\ast}$ and its countable discrete extension $K\cup \omega$ (we think that $\omega$ is disjoint from $Y^\ast$).
Then we consider $B_{Y^\ast}\cup\omega$ to conclude the proof.
Hence, $(ii)$ and $(iii)$ are equivalent.

To prove $(i)\to (ii)$
	suppose that $\ext(Y,c_0)=0$, and let $L = B_{Y^\ast}\cup \omega$ be a countable discrete extension of the dual unit ball. We consider the following subspace of the Banach space of continuous functions $C(L)$:
	\[ X = \left\{x\in C(L) : (\exists y\in Y) \  (\forall y^\ast\in B_{Y^\ast}) \ x(y^\ast)=y^\ast(y) \right\}.\]
	We have a short exact sequence
	\[0\longrightarrow c_0  \overset{u}{\lra} X \overset{p} \longrightarrow Y \longrightarrow 0,\]
	where the  operator $u: c_0\lra X$ sends an element $x\in c_0$ to the continuous function $u(x)$ on $L$ that acts like $x$ on $\omega$ and vanishes on $B_{Y^\ast}$, while
 the operator $p: X\lra Y$ sends a function $x\in X$ to the unique $y=p(x)$ such that $x(y^\ast) = y^\ast(y)$ for all $y^\ast\in B_{Y^\ast}$. We know that this exact sequence splits, so there is an operator $E:Y\longrightarrow X$ such that $E(y)(y^\ast) = y^\ast(y)$ for all $y^\ast\in B_{Y^\ast}$. For every $n$  define $y_n^\ast = E^\ast(\delta_n)$, where $\delta_n\in X^\ast$ is the evaluation at the point $n\in\omega\sub L$. A natural candidate for a realization of $L$ inside $Y^\ast$ is $B_{Y^\ast}\cup \{y_n^\ast: n\in\omega\}$ (which follows directly from the formula $y^\ast_n(y)=Ey(n)$).
  Note that we may have that $y_n^\ast\in B_{Y^\ast}$; however using {\sc Remark} (for $a=1$), we can replace $y_n^\ast$ by $x_n^\ast$ and conclude that $L$ can be realized inside $Y^\ast$.
	
We shall now prove that $(ii)\to (i)$.
	Consider  a short exact sequence 	
\[ 0\longrightarrow c_0 \overset{u}{\longrightarrow} X \overset{p}{\longrightarrow} Y \longrightarrow 0.\]
To check that it splits we examine the dual sequence
	\[ 0\longleftarrow c_0^\ast \overset{u^\ast}{\longleftarrow} X^\ast \overset{p^\ast}{\longleftarrow} Y^\ast \longleftarrow 0.\]
	Let $\{e_n^\ast : n<\omega\}$ be the dual unit basis in $c_0^\ast=\ell_1$. By the open mapping theorem, we find a bounded sequence $\{x_n^\ast\}\subset X^\ast$ such that $u^\ast(x_n^\ast) = e_n^\ast$ for all $n$.

Note that if $x^\ast$ is a $weak^\ast$ cluster point of $x_n^\ast$'s then from the equality $u^\ast(x_n^\ast) = e_n^\ast$ it follows that $x^\ast$
vanishes on $u(c_0)$. We conclude  from  the fact that $u(c_0)$ is the kernel of $p$, that $x^\ast$ lies in $p^\ast(Y^\ast)$, but $x_n^\ast\not\in p^\ast(Y^\ast)$ for any $n$.
It follows that  for some radius $r>0$, we have a countable discrete extension $L=p^\ast(r\cdot B_{Y^\ast}) \cup \{x_n^\ast : n<\omega\}$
of the $weak^\ast$ compact set $p^\ast(r\cdot B_{Y^\ast})$ which is a homeomorphic copy of $K=r\cdot B_{Y^\ast}$. Identifying $p^\ast(r\cdot B_{Y^\ast})$ with $r\cdot B_{Y^\ast}$ (via $p^\ast$) we can treat $L$ as a countable discrete extension of $K$.
By our assumption,  we can realize this extension as a bounded set $r\cdot B_{Y^\ast}\cup \{y_n^\ast\}\subset Y^\ast$.

In order to prove, that the above exact sequence splits, we shall find an operator $T:X\longrightarrow c_0$ such that $Tu$ is the identity on $c_0$.
We define such $T$ by the formula
\[Tx = (x_n^\ast(x) - y_n^\ast(px))_{n<\omega},\]
 and check that it is as required.

 First  we prove that $Tx$ indeed belongs to $c_0$. Since the functionals $y_n^\ast$ were chosen to realize the  countable discrete extension given by
 $x_n^\ast$, for every nonprincipal ultrafilter $\mathcal{U}$ on $\omega$ we have
 $\lim_{n\to \mathcal{ U}}x_n^\ast = p^\ast(\lim_{n\to \mathcal{U}} y_n^\ast),$ in the $weak^\ast$ topology.
 Thus
 \[ \lim_{n\to \mathcal{U}}x_n^\ast(x) = p^\ast(\lim_{n\to \mathcal{U}} y_n^\ast)(x) =
  \lim_{n\to \mathcal{U}} p^\ast(y_n^\ast)(x) = \lim_{n\to\mathcal{U}} y_n^\ast(px),\]
  for every $x\in X$,
 and this shows that $Tx \in c_0$. Since the sequences $(x_n^\ast)$ and $(y_n^\ast)$ are both bounded, $T$ is a bounded linear operator. Finally, we prove that $Tu$ is the identity on $c_0$.  Take $z=(z_n)\in c_0$;  since $pu=0$ and $u^\ast(x_n^\ast)=e_n^\ast$, then $n$-th coordinate of $Tuz$ is
	\[ x_n^\ast(uz) - y_n^\ast(puz) = x_n^\ast(uz) = u^\ast(x_n^\ast)(z) = e_n^\ast(z) = z_n.\]

$(iii)\to (iv)$. 	Let $f:\omega^\ast\lra Y^\ast$ be a continuous mapping. Then, by Lemma \ref{prel:2}, the space $K=f(\omega^\ast)$ is a remainder of some compactification
$\gamma\omega$ and $f$ extends to   a continuous surjection  $g: \beta\omega\lra \gamma\omega$ that does not move natural numbers.
By our assumption, the inclusion map $\imath:K\lra Y^\ast$ extends into an embedding $\widetilde{\imath}:\gamma\omega\lra Y^\ast$, and
$\widetilde{\imath}\circ g:\beta\omega\lra Y^\ast$ is the required extension of $f$.

$(iv)\to (iii)$.
Take a $weak^\ast$ compact set $K\sub Y^\ast$ and consider its countable discrete extension
$L=K \cup\omega$. Let $K_0$ be the closure of $\omega$ in $L$. Then $K_0$ is a compactification of $\omega$
so there is a continuous map $F:\beta\omega \lra K_0$ which is the identity on $\omega$ and $f= F|\omega^\ast$ is a surjection of $\omega^\ast$ onto $K_0\setminus\omega$. By $(iv)$ there is an extension of $f$ to a continuous
mapping $\widehat{f}:\beta\omega\lra Y^*$. Using {\sc Remark} from the beginning of the proof,
we can assume that $\widehat{f}(n)\notin K$ for every $n$ and
$\widehat{f}(n)\neq \widehat{f}(k)$ for $n\neq k$. Then the inclusion map $K\lra Y^\ast$ extends to an embedding $K\cup\omega\lra Y^\ast$
sending $n\in\omega$ to $\widehat{f}(n)$.

Note that $(v)$ is equivalent to saying that every  bounded operator $T:Y\lra C(\omega^\ast)$ can be written as $T=R\circ\wt{T}$ for some
bounded operator $\wt{T}:Y\lra C(\beta\omega)$, where $R:C(\beta\omega)\lra C(\omega^\ast)$ is the restriction.

$(iv)\to (v)$.
Take   $T:Y\lra C(\omega^\ast)$ and consider the conjugate operator $T^\ast: M(\omega^\ast)\lra Y^\ast$.
Define $g:\omega^\ast\lra Y^\ast$ by $g(\cU)=T^\ast\delta_\cU$ for $\cU\in \omega^\ast$. Then $g$ is $weak^\ast$ continuous so by $(iv)$ it can be extended to
a continuous map $\wt{g}:\beta\omega\lra Y^\ast$. Let $y_n^\ast=\wt{g}(n)$ for every $n\in\omega$. Then we may defined the required
operator $\wt{T}:Y\lra C(\beta\omega)$ putting $\wt{T}y(n)=y^\ast_n$ for $n\in\omega$
and $\wt{T}y(\cU)=\lim_{n\to\cU} y_n^\ast(y)$ for $\cU\in\omega^\ast$, where $y\in Y$. Indeed, for any $\cU\in\omega^\ast$ and $y\in Y$,
\[ \wt{T}y(\cU)=\lim_{n\to\cU} y_n^\ast(y)=\lim_{n\to\cU} \wt{g}(n)(y)=g(\cU)(y)=T^\ast \delta_\cU (y)=Ty(\cU).\]

$(v)\to (iv)$.
Consider a continuous map $g:\omega^\ast\lra Y^\ast$.
We can define an operator $T:Y\lra C(\omega^\ast)$ by $Ty(\cU)=g(\cU)(y)$ and lift it to $\wt{T}:Y\lra C(\beta\omega)$. Then it is easy to check that
putting $\wt{g}(n)=\wt{T}^\ast\delta_n$ we define a continuous extension $\wt{g}:\beta\omega\lra Y^\ast$ of $g$.
\end{proof}

It will be useful in the sequel to have the following `bounded' version of Theorem \ref{main}.

\begin{theorem}\label{main2}
For an infinite dimensional Banach space Banach space $Y$ the following are equivalent:
\begin{enumerate}[(i)]
	\item $\ext(Y,c_0) = 0$;
	\item there is a constant $r>0$ such that  every countable discrete extension of $(B_{Y^\ast},weak^\ast)$ can be realized inside
$(r\cdot B_{Y^\ast},weak^\ast)$;
	\item there is a constant $r>0$ such that every continuous function $\omega^\ast\longrightarrow (B_{Y^\ast},weak ^\ast)$ extends to a continuous function
$\beta\omega\longrightarrow (r\cdot B_{Y^\ast},weak^\ast)$;
\item there is a constant $r>0$ such that
every bounded operator $T:Y\lra \ell_\infty/c_0$ can be lifted to $\tilde{T}:Y\lra \ell_\infty$ with $\|\tilde{T}\|\leq r \|T\|$.
\end{enumerate}	
\end{theorem}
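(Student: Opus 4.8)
The plan is to deduce the bounded version from Theorem \ref{main} by a careful bookkeeping of norms through the arguments already given, together with one genuinely new ingredient: a uniform boundedness / closed-graph type argument that produces the single constant $r$ out of the a~priori family of constants that the non-quantified statements only guarantee pointwise. First observe that the implications $(ii)\to(i)$, $(iii)\to(i)$ and $(iv)\to(i)$ are immediate, since each of these is formally stronger than the corresponding condition in Theorem \ref{main}, and those already imply $\ext(Y,c_0)=0$. The equivalences $(ii)\Leftrightarrow(iii)\Leftrightarrow(iv)$ (with the \emph{same} $r$, up to harmless multiplicative adjustments) are obtained by rereading the proofs of $(ii)\leftrightarrow(iii)\leftrightarrow(iv)\leftrightarrow(v)$ in Theorem \ref{main}: every construction there (passing from a discrete extension of $B_{Y^\ast}$ to a map $\beta\omega\to Y^\ast$, or to an operator, and back) is norm-controlled, and the \textsc{Remark} at the start of that proof already records the explicit estimate $\|x_n^\ast\|\le 2c+a+2$ when one perturbs a bounded sequence; one only needs to track that the radius of the ball into which things land is bounded by an absolute affine function of $r$. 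Thus the real content is the single implication $(i)\to(ii)$ (say), i.e.\ producing one uniform $r$ that works for \emph{all} countable discrete extensions of $B_{Y^\ast}$ simultaneously.

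For this I would argue by contradiction. Suppose $(i)$ holds but no such $r$ exists; then for each $n\in\N$ there is a countable discrete extension $L_n$ of $B_{Y^\ast}$ that can be realized inside $(Y^\ast,weak^\ast)$ but not inside $(n\cdot B_{Y^\ast},weak^\ast)$. Following the proof of $(ii)\to(i)$ in Theorem \ref{main}, each such $L_n$ gives rise to a short exact sequence $0\to c_0\to X_n\to Y\to 0$; the failure of a realization inside $n\cdot B_{Y^\ast}$ translates, after unwinding the formula $Tx=(x_k^\ast(x)-y_k^\ast(px))_{k}$ that splits the sequence, into a lower bound on the norm of \emph{every} retraction $X_n\to c_0$. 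Now take the $\ell_1$-sum (or $c_0$-sum) $X=\bigoplus X_n$ fitting into $0\to c_0\to X\to Y\to 0$ via the diagonal-type construction used in Lemma \ref{prel:0}; by $(i)$ this single sequence splits by some bounded operator $T:X\to c_0$, whose restriction to the $n$-th block yields a retraction $X_n\to c_0$ of norm $\le\|T\|$ for every $n$, contradicting the lower bounds. Equivalently, and perhaps more cleanly, one phrases this via condition $(v)$/$(iv)$: the set of operators $Y\to\ell_\infty/c_0$ that lift to $\ell_\infty$ is all of $\mathcal B(Y,\ell_\infty/c_0)$ by Theorem \ref{main}, the lifting map $T\mapsto\wt T$ can be taken linear after choosing, once and for all, a linear section of $Q\colon\ell_\infty\to\ell_\infty/c_0$ composed with the splitting, and a closed-graph argument on this linear map $\mathcal B(Y,\ell_\infty/c_0)\to\mathcal B(Y,\ell_\infty)$ — using that weak$^\ast$-limits respect pointwise bounds — gives its boundedness, i.e.\ the constant $r$ in $(iv)$.

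I expect the main obstacle to be precisely the passage from ``splits'' to ``splits with controlled norm'': a priori the operators $E$ or $T$ produced in Theorem \ref{main} carry no estimate, so one cannot just quote that proof verbatim. The diagonal-sum trick sidesteps this, but it requires checking that the sum of the exact sequences is again an exact sequence of the right form $0\to c_0\to X\to Y\to 0$ (rather than $0\to c_0(\N,c_0)\to X\to Y\to 0$), which forces a small amount of care: one should sum along a single copy of $c_0$, amalgamating the countably many $c_0$'s diagonally, exactly as in the construction $0\to c_0\to X\oplus A\to Y_0\oplus A\to 0$ in the proof of Lemma \ref{prel:0}, iterated. The secondary, routine-but-tedious point is the norm bookkeeping in $(ii)\Leftrightarrow(iii)\Leftrightarrow(iv)$: one must verify that the four occurrences of ``$r$'' can be taken to be the same up to replacing $r$ by, say, $2r+2$ at each step, which is harmless since the statement only asserts existence of \emph{some} constant.
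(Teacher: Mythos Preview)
Your diagonal-sum approach has a genuine gap. Lemma \ref{prel:0} does not amalgamate countably many sequences $0\to c_0\to X_n\to Y\to 0$ with the \emph{same} quotient $Y$ into a single sequence $0\to c_0\to X\to Y\to 0$; it only enlarges the quotient from $Y_0$ to $Y_0\oplus A$. The naive fiber product $\{(x_n)\in\prod X_n : p_n x_n = p_m x_m\}$ has kernel $\prod c_0$, not $c_0$; if instead you restrict to a $c_0$-sum then $p_n x_n\to 0$ forces the common value to be $0$ and the quotient collapses. Some scaling device is needed to make the kernel a single $c_0$ while keeping $Y$ as quotient, and once you introduce scaling you are essentially reproducing the paper's argument (see below). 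Your closed-graph variant is also not correct as written: there is no bounded linear section of $Q:\ell_\infty\to\ell_\infty/c_0$ (that is precisely the failure of complementation of $c_0$ in $\ell_\infty$), and composing an algebraic section with $T$ does not in general yield a bounded operator, so the map $\mathcal B(Y,\ell_\infty/c_0)\to\mathcal B(Y,\ell_\infty)$ you want to apply the closed-graph theorem to is not well-defined.

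That said, a minor reformulation of your second idea does work and is arguably cleaner than the paper's proof: the map $Q_\ast:\mathcal B(Y,\ell_\infty)\to\mathcal B(Y,\ell_\infty/c_0)$, $S\mapsto Q\circ S$, is a bounded linear operator between Banach spaces, and Theorem \ref{main}$(v)$ says precisely that it is surjective when $\ext(Y,c_0)=0$; the open mapping theorem then gives a constant $r$ with $Q_\ast(r\cdot B_{\mathcal B(Y,\ell_\infty)})\supseteq B_{\mathcal B(Y,\ell_\infty/c_0)}$, which is exactly $(iv)$. The paper takes a different, constructive route for $(i)\to(iii)$: assuming $(iii)$ fails, it picks for each $n$ a map $\varphi_n:\omega^\ast\to B_{Y^\ast}$ with no extension into $n\cdot B_{Y^\ast}$, and glues them into a single map $\varphi$ on $(\omega\times\omega)^\ast\cong\omega^\ast$ by setting $\varphi(\mathcal U)=\frac{1}{\sqrt n}\,\varphi_n(\mathcal U^n)$ on the clopen piece $\{\mathcal U:\omega\times\{n\}\in\mathcal U\}$ and $\varphi=0$ elsewhere; the scaling forces continuity, while any bounded extension $\widetilde\varphi:\beta(\omega\times\omega)\to Y^\ast$ would, after multiplying by $\sqrt n$, extend $\varphi_n$ into $\sqrt n\cdot\|\widetilde\varphi\|\cdot B_{Y^\ast}$, contradicting the choice of $\varphi_n$ for large $n$. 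The open-mapping argument is shorter; the paper's gluing has the virtue of staying inside the topological framework of conditions $(ii)$--$(iii)$ and exhibiting an explicit obstruction.
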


\begin{proof}
The implication $(ii)\to (i)$ follows directly from Theorem \ref{main}.

To justify $(iii)\to (ii)$ we can repeat the argument from the proof of $(iv)\to (iii)$ in Theorem \ref{main}. Here we should also use the estimate of the norm given and the end of the proof of {\sc Remark}. This estimate allows to show that if a constant $r$ satisfies the condition from $(iii)$, then the constant $2r+3$ satisfies the condition from $(ii)$ (we use {\sc Remark} for $a=1$ and $c=r$). In a similar, even easier way, the proof of the equivalence $(iv)\Leftrightarrow (v)$ in Theorem~\ref{main} gives now $(iii)\Leftrightarrow (iv)$.

It remains to check $(i)\to (iii)$.

Suppose that $(iii)$ does not hold; then for every $n$ there is a continuous map $\vf_n:\omega^\ast\lra B_{Y^\ast}$ that cannot be
extended to a continuous function $\beta\omega\lra n\cdot B_{Y^\ast}$. We consider the remainder $(\omega\times\omega)^\ast$
of $ \beta(\omega\times\omega)$,  the \v{C}ech-Stone compactification of $\omega\times\omega$,
which is clearly homeomorphic to  $\omega^\ast$. Define a function $\vf:(\omega\times\omega)^\ast\lra Y^\ast$ by the formula
\[
\vf(\cU)=
\begin{cases}
    \frac{1}{\sqrt{n}}\vf_n(\cU^n), & \text{if } \omega\times\{n\}\in\cU, \\
    0 & \text{if } \omega\times\{n\}\notin\cU \text{ for every } n,
  \end{cases}
\]
for  any nonprincipial ultrafilter $\cU$ on $\omega\times\omega$; here
 $\cU^n$ denotes
 is the ultrafilter on $\omega$ defined by $A\in \cU^n$ iff $A\times\{n\}\in\cU$ (in case when  $\omega\times\{n\}\in\cU$).
It is clear that $\vf$ is continuous when we put on $Y^\ast$ the $weak^\ast$ topology.

Let us check that  $\vf$ does not have a continuous extension $\widetilde{\vf}:\beta(\omega\times\omega)\lra Y^\ast$. Otherwise,
for every $n$,  the function  $\sqrt{n}\, \widetilde{\vf}$ restricted to $\omega\times\{n\}$ gives a continuous extension  $\widetilde{\vf_n}: \beta\omega\lra Y^\ast$ of $\vf_n$.
By the choice of $\vf_n$, we have $\widetilde{\vf_n}(\beta\omega)\not\subset n\cdot B_{Y^\ast}$; consequently,
 $\widetilde{\vf}(\beta(\omega\times\omega))\not\subset\sqrt{n}\cdot B_{Y^\ast}$ for every $n$,
contrary to the fact that the image of $\widetilde{\vf}$ should be bounded.

Now, by the implication $(iv)\to (i)$ of  Theorem \ref{main}, we infer that  $\ext(Y,c_0)\neq 0$, and we are done.

\end{proof}


\section{Consequences of $\ext(Y,c_0)=0$}\label{when}

In this section we apply Theorem \ref{main} to show that the assumption $\ext(Y,c_0)=0$ has a strong impact on the properties of the $weak^\ast$ topology of $Y^\ast$.
This yields simple cardinal tests for Banach spaces admitting nontrivial twisted sums with $c_0$.
\newcommand{\dens}{\protect{\rm dens}}

Recall that a compact topological space $K$ is {\em monolithic} if and only if the density $\dens(F)$ coincides with the weight $w(F)$ for every closed subspace $F$ of $K$.

\begin{corollary}\label{when:1}
Let $Y$ be a Banach space satisfying $\ext(Y,c_0)=0$.

\begin{enumerate}[(a)]
\item If $Y$ is isomorphic to a subspace of $\ell_\infty/c_0$, then $Y$ is isomorphic to a subspace of $\ell_\infty$.
\item
If $\dens(Y)\leq \omega_1$, then $Y$ is isomorphic to a subspace of $\ell_\infty$.
\item
Every compact subset of $(Y^\ast, weak^\ast) $ of weight $\omega_1$ is contained in a $weak^\ast$--separable bounded subset of $Y^\ast$.
\item If $Y$ is nonseparable then  $(B_{Y^\ast},w^\ast)$ is not monolithic.
\end{enumerate}
\end{corollary}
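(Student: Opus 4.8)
The plan is to treat the four items in the order (a), (b), (c), (d), deducing (b) from (a) and (d) from (c); throughout we may assume $Y$ is infinite-dimensional, the finite-dimensional case being trivial, so that Theorem~\ref{main} applies. For (a), let $T\colon Y\to\ell_\infty/c_0$ be an isomorphic embedding, regarded simply as a bounded operator. Since $\ext(Y,c_0)=0$, condition $(v)$ of Theorem~\ref{main} provides a bounded lifting $\wt T\colon Y\to\ell_\infty$ with $Q\wt T=T$, where $Q\colon\ell_\infty\to\ell_\infty/c_0$ is the quotient map. As $\|Q\|\le 1$ we get $\|\wt Ty\|\ge\|Q\wt Ty\|=\|Ty\|\ge c\|y\|$, where $c>0$ is a lower constant of the embedding $T$; together with $\|\wt Ty\|\le\|\wt T\|\,\|y\|$ this shows $\wt T$ is itself an isomorphic embedding, so $Y$ is isomorphic to the (closed) subspace $\wt T(Y)$ of $\ell_\infty$. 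Item (b) is then immediate: by Parovi\v{c}enko's theorem (see Section~\ref{prel}) every Banach space of density $\le\omega_1$ embeds isometrically into $C(\omega^\ast)\cong\ell_\infty/c_0$, and (a) applies.

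For (c), let $K\subseteq(Y^\ast,w^\ast)$ be compact of weight $\omega_1$. By Parovi\v{c}enko's theorem there is a continuous surjection $f\colon\omega^\ast\to K$; viewing it as a continuous map $\omega^\ast\to(Y^\ast,w^\ast)$, condition $(iv)$ of Theorem~\ref{main} extends it to a $w^\ast$-continuous $\wh f\colon\beta\omega\to Y^\ast$. Since $\omega$ is dense in $\beta\omega$, the image $\wh f(\beta\omega)$ equals the $w^\ast$-closure of the countable set $\{\wh f(n):n\in\omega\}$, hence is $w^\ast$-separable; it contains $K=\wh f(\omega^\ast)$; and being $w^\ast$-compact it is pointwise bounded, so norm bounded by the uniform boundedness principle. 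Thus $\wh f(\beta\omega)$ is the desired $w^\ast$-separable bounded set containing $K$.

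For (d) I argue by contradiction: suppose $Y$ is nonseparable while $(B_{Y^\ast},w^\ast)$ is monolithic. Then $(B_{Y^\ast},w^\ast)$ is non-metrizable, and monolithicity forces the $w^\ast$-closure of every countable subset of $B_{Y^\ast}$ to be separable, hence metrizable, hence a \emph{proper} closed subset of $B_{Y^\ast}$. Using that every $\alpha<\omega_1$ is countable, I recursively choose $x^\ast_\alpha\in B_{Y^\ast}$ with $x^\ast_\alpha\notin\overline{\{x^\ast_\beta:\beta<\alpha\}}^{w^\ast}$ for all $\alpha<\omega_1$, and put $F=\overline{\{x^\ast_\alpha:\alpha<\omega_1\}}^{w^\ast}$. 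Monolithicity gives $w(F)=\dens(F)\le\omega_1$; for the reverse inequality, if $F$ were metrizable it would be hereditarily separable, so some countable $\{x^\ast_{\alpha_n}:n\in\omega\}$ would be dense in $\{x^\ast_\alpha:\alpha<\omega_1\}$, and then for $\gamma=\sup_n(\alpha_n+1)<\omega_1$ we would get $F=\overline{\{x^\ast_{\alpha_n}:n\in\omega\}}^{w^\ast}\subseteq\overline{\{x^\ast_\beta:\beta<\gamma\}}^{w^\ast}$, contradicting $x^\ast_\gamma\in F\setminus\overline{\{x^\ast_\beta:\beta<\gamma\}}^{w^\ast}$. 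Hence $w(F)=\omega_1$. Now part (c) puts $F$ inside a $w^\ast$-separable bounded set, which we may take to lie in $r\cdot B_{Y^\ast}$ for some $r>0$; since $r\cdot B_{Y^\ast}$ is $w^\ast$-homeomorphic to $B_{Y^\ast}$ it is also monolithic, so the $w^\ast$-closure of that set, a separable closed subset of $r\cdot B_{Y^\ast}$ containing $F$, is metrizable; therefore $F$ is metrizable, contradicting $w(F)=\omega_1$.

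The verifications in (a)--(c) use only standard facts (norm estimates, density of $\omega$ in $\beta\omega$, the uniform boundedness principle) together with the cited results, so the point requiring real care is in (d): proving that the recursively built closed set $F$ has weight \emph{exactly} $\omega_1$, not merely $\le\omega_1$ --- i.e.\ that non-metrizability survives the transfinite construction. This is exactly where one exploits the regularity of $\omega_1$ and the fact that, under monolithicity, countable subsets of $B_{Y^\ast}$ have metrizable (hence proper) $w^\ast$-closures; granting this, (d) reduces to feeding $F$ into part (c).
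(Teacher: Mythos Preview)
Your proof is correct and follows essentially the same route as the paper's: (a) via the lifting condition $(v)$ of Theorem~\ref{main}, (b) via Parovi\v{c}enko plus (a), (c) via Parovi\v{c}enko plus the extension condition $(iv)$, and (d) by a transfinite construction of a non-separable $\omega_1$-sequence in $B_{Y^\ast}$ followed by an application of (c). You are in fact slightly more careful than the paper in two places: you make explicit why the lifted map in (a) is bounded below, and in (d) you spell out why $w(F)=\omega_1$ (rather than merely $\le\omega_1$) and why monolithicity transfers to $r\cdot B_{Y^\ast}$, points the paper leaves implicit.
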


\begin{proof}
Statement $(a)$ follows form Theorem~\ref{main} using condition $(v)$.

To check $(b)$ note that if $\dens(Y)\le\omega_1$ then $(B_{Y^\ast}, weak^\ast)$ is of topological weight $\le\omega_1$ and, by Parovi\v{c}enko's theorem,
is a continuous image of $\omega^\ast$. This means that $Y$ embeds into $C(\omega^\ast)$ and we may apply $(a)$.

Part $(c)$ follows from Parovi\v{c}enko's theorem and $(iv)\to (i)$ of Theorem \ref{main}.

We argue for $(d)$ as follows: Suppose that $(B_{Y^\ast},w^\ast)$ is monolithic. Then the $weak^\ast$ closure of any countable subset of $B_{Y^\ast}$ is metrizable, hence it is a proper subset of $B_{Y^\ast}$, which is nonmetrizable. Therefore, we can choose inductively  $y^\ast_\alpha\in B_{Y^\ast}$, for $\alpha<\omega_1$, such that $y^\ast_\alpha$ does not belong to the $weak^\ast$ closure of $\{y_\beta^\ast:\beta<\alpha\}$. Clearly, this implies that the set $A=\{y_\alpha^\ast:\alpha<\omega_1\}$ is not separable. Let $F$ be the $weak^\ast$ closure of $A$. By monolithicity $w(F)\le\omega_1$ and, by $(c)$, $F$ is contained in a $weak^\ast$ separable bounded set $F_1$. Then, again by monolithicity,  $F_1$ and $A$ are separable metrizable, a contradiction.
\end{proof}

Recall that every Corson compact space is monolithic. Hence Corollary \ref{when:1}(d) extends the result stating that
$\ext(Y,c_0)\neq 0$ for every nonseparable
Banach space $Y$ which is weakly Lindel\"of determined, i.e.\ such that $B_{Y^\ast}$ is Corson compact in its $weak^\ast$ topology.
It is not true in ZFC that $(B_{C(K)^\ast},weak^\ast)$ is Corson when $K$ is Corson, but we can say the following (cf. \cite[Theorem 10.2]{MP18}):

\begin{corollary}\label{some:2}
	Under Martin's axiom, $\ext(C(K),c_0)\neq 0$ for every nonmetrizable Corson compact space $K$.
\end{corollary}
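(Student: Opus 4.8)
\emph{Sketch of the intended proof.} The plan is to apply Corollary~\ref{when:1}(d) in contrapositive form: a nonseparable Banach space $Y$ with $(B_{Y^\ast},weak^\ast)$ monolithic satisfies $\ext(Y,c_0)\neq 0$. Since $C(K)$ is separable precisely when $K$ is metrizable, for a nonmetrizable $K$ it is enough to show that, under Martin's axiom, $(B_{C(K)^\ast},weak^\ast)=(M_1(K),weak^\ast)$ is monolithic whenever $K$ is Corson compact.

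The only point at which Martin's axiom enters is the classical fact that under $\MA_{\omega_1}$ every ccc Corson compact space is metrizable (see e.g.\ \cite{AMN}); equivalently, every Radon measure on a Corson compact space has metrizable support. Indeed, for $\mu\in M_1(K)$ the set $L=\mathrm{supp}(\mu)$ is a closed, hence Corson compact, subspace of $K$ carrying the strictly positive measure $\mu|_L$, so $L$ is ccc and therefore metrizable. I expect this to be the real content of the argument; everything that follows is carried out in ZFC, and the only mild subtlety there is keeping track of the $weak^\ast$ topology on sets of measures.

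Granting the support fact, here is how I would establish monolithicity of $(M_1(K),weak^\ast)$. Fix an infinite cardinal $\kappa$ and a set $D\subseteq M_1(K)$ with $|D|\le\kappa$; it suffices to bound the weight of the $weak^\ast$-closure of $D$ by $\kappa$. For each $\mu\in D$ choose a countable dense subset $D_\mu$ of the (metrizable) support of $\mu$ and let $S$ be the closure in $K$ of $\bigcup_{\mu\in D}D_\mu$. Then $S$ is a closed subspace of $K$ with $\dens(S)\le\kappa$, so by monolithicity of the Corson compact $K$ we get $w(S)\le\kappa$; moreover every $\mu\in D$ is concentrated on $S$. Now the set $\{\mu\in M_1(K):|\mu|(K\setminus S)=0\}$ is $weak^\ast$ closed: putting $V=K\setminus S$, Lemma~\ref{prel:-1} shows that $\{\mu:\mu^+(V)>0\}=\bigcup_n\{\mu:\mu^+(V)>1/n\}$ and likewise $\{\mu:\mu^-(V)>0\}$ are $weak^\ast$ open, and our set is the complement of their union. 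This $weak^\ast$ closed set contains $D$, hence contains the $weak^\ast$-closure of $D$, and restriction of measures identifies it $weak^\ast$-homeomorphically with $(M_1(S),weak^\ast)$, whose weight is at most $\max(w(S),\aleph_0)\le\kappa$. Therefore the $weak^\ast$-closure of $D$ has weight $\le\kappa$; as $D$ was arbitrary, $(M_1(K),weak^\ast)$ is monolithic, and Corollary~\ref{when:1}(d) yields $\ext(C(K),c_0)\neq 0$.
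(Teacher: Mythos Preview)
Your argument and the paper's agree in spirit: both reduce the $\MA+\neg\CH$ case to the monolithicity of $(M_1(K),weak^\ast)$ and then invoke Corollary~\ref{when:1}(d). The paper simply cites \cite{AMN} for the stronger fact that $M_1(K)$ is Corson compact (hence monolithic) under $\MA+\neg\CH$, whereas you reprove monolithicity by hand from the same key ingredient (under $\MA_{\omega_1}$ every ccc Corson compactum is metrizable, so supports of measures on $K$ are metrizable). Your direct argument is correct; the step showing that $\{\mu\in M_1(K):|\mu|(K\sm S)=0\}$ is $weak^\ast$ closed and $weak^\ast$ homeomorphic to $M_1(S)$ is fine.

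There is, however, a genuine gap: you use $\MA_{\omega_1}$, but the statement is formulated under Martin's axiom, which holds vacuously under $\CH$. Under $\CH$, $\MA_{\omega_1}$ fails and, more to the point, the support-metrizability fact you rely on can fail --- there exist (under $\CH$) Corson compacta carrying Radon measures with nonmetrizable support, so your route to monolithicity of $M_1(K)$ breaks down. The paper deals with this by splitting into cases: under $\CH$ the result follows from Correa--Tausk \cite{CT16} (or from Theorem~\ref{ch:8}), and under $\MA+\neg\CH$ it proceeds as above. You should add the $\CH$ case explicitly; a single sentence citing either of those results suffices.
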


\begin{proof}
	The above result was proved under CH by Correa and Tausk \cite{CT16}, and it will also come as a corollary of our Theorem \ref{ch:8}.	Under MA + $\neg$CH,  the dual unit ball $L$ in $C(K)^\ast$ is Corson compact in  its $weak^\ast$ topology for every Corson $K$, see \cite{AMN}.
	In particular,  $L$ is monolithic so $\ext(C(K),c_0)\neq 0$ by Corollary  \ref{when:1}{\em (d)}.
\end{proof}

Below we collect several cardinal restrictions on the $weak^\ast$ topology in a dual of the space $Y$ satisfying $\ext(Y,c_0)=0$.
Here $C(T,S)$ stands for the set of all continuous functions between topological spaces $T$ and $S$.

\begin{corollary}\label{when:2}
Let $Y$ be a Banach space satisfying $\ext(Y,c_0)=0$.

\begin{enumerate}[(a)]
\item \label{1}
If $K$ is a weak$^\ast$ compact subset of $Y^\ast$, then the number of (pairwise non-homeomorphic) countable discrete extensions of $K$ is bounded by $|Y^\ast|$.
\item   \label{2}
 $|C(\omega^\ast,B_{Y^\ast})|\leq |Y^\ast|$.
\item \label{5}
If $(Y^\ast, weak^\ast) $ contains a copy of $\mathbb{A}(\kappa)$ with $\kappa\leq\mathfrak{c}$, then $|Y^\ast| \ge  2^\kappa$.
 \end{enumerate}
\end{corollary}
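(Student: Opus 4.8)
The plan is to obtain all three items as consequences of Theorem~\ref{main} (applicable since $\ext(Y,c_0)=0$), together with the elementary fact that $|Y^\ast|^{\aleph_0}=|Y^\ast|$, valid for every infinite-dimensional Banach space. For (a) I would use the equivalence (i)$\Leftrightarrow$(iii): every countable discrete extension $L$ of a weak$^\ast$ compact $K\sub Y^\ast$ can be realized inside $(Y^\ast,weak^\ast)$, so $L$ is homeomorphic to a set $K\cup S$ with $S\sub Y^\ast\sm K$ countably infinite and carrying the weak$^\ast$ subspace topology. Since $K$ is fixed, the homeomorphism type of $K\cup S$ depends only on the set $S$, so every such extension is homeomorphic to one indexed by a countable subset of $Y^\ast$; hence the number of homeomorphism types is at most the number of countable subsets of $Y^\ast$, which is $|Y^\ast|^{\aleph_0}=|Y^\ast|$.

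For (b) I would invoke (i)$\Leftrightarrow$(iv). Given $f\in C(\omega^\ast,B_{Y^\ast})$, viewed as a weak$^\ast$ continuous map $\omega^\ast\lra Y^\ast$, Theorem~\ref{main}(iv) supplies a weak$^\ast$ continuous extension $\wt f\colon\beta\omega\lra Y^\ast$. As $\omega$ is dense in $\beta\omega$ and the weak$^\ast$ topology is Hausdorff, this $\wt f$ is the unique continuous extension of $f$ and is determined by the sequence $(\wt f(n))_{n\in\omega}\in(Y^\ast)^\omega$ (indeed $\wt f(\cU)=\lim_{n\to\cU}\wt f(n)$ in the weak$^\ast$ topology for $\cU\in\omega^\ast$), while $f=\wt f|_{\omega^\ast}$. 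Thus $f\mapsto(\wt f(n))_n$ injects $C(\omega^\ast,B_{Y^\ast})$ into $(Y^\ast)^\omega$, giving $|C(\omega^\ast,B_{Y^\ast})|\le|Y^\ast|^{\aleph_0}=|Y^\ast|$.

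For (c) I would first dispose of finite $\kappa$ (then $2^\kappa<\con\le|Y^\ast|$ trivially). For infinite $\kappa$: a homeomorphic copy of $\bA(\kappa)$ in $(Y^\ast,weak^\ast)$ is weak$^\ast$ compact, hence norm bounded, so after multiplying by a suitable positive scalar we may assume it is $C=\{d_\alpha:\alpha<\kappa\}\cup\{d_\infty\}\sub B_{Y^\ast}$ with the $d_\alpha$ distinct and weak$^\ast$ isolated in $C$ and $d_\infty$ their unique weak$^\ast$ accumulation point (so $d_\infty\neq d_\alpha$ for all $\alpha$). The aim is to produce $2^\kappa$ distinct elements of $C(\omega^\ast,B_{Y^\ast})$ and apply (b). Since $\kappa\le\con$, fix an almost disjoint family $\{C_\alpha:\alpha<\kappa\}$ of infinite subsets of $\omega$ and split each $C_\alpha=C_\alpha^0\cup C_\alpha^1$ into two disjoint infinite pieces; for $s\in 2^\kappa$ put $A^s_\alpha=C_\alpha^{s(\alpha)}$, so $\{A^s_\alpha:\alpha<\kappa\}$ is again almost disjoint. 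Define $g_s\colon\omega^\ast\lra C$ by $g_s(\cU)=d_\alpha$ when $A^s_\alpha\in\cU$ (at most one such $\alpha$, by almost disjointness) and $g_s(\cU)=d_\infty$ otherwise. Each $g_s$ is weak$^\ast$ continuous: $g_s^{-1}(\{d_\alpha\})$ is the clopen set of ultrafilters containing $A^s_\alpha$, and the preimage of a basic neighbourhood of $d_\infty$ (which omits only finitely many $d_\alpha$) is the complement of a finite union of such clopen sets. The decisive point is that the $g_s$ are pairwise distinct: if $s\neq s'$, pick $\alpha$ with $s(\alpha)\neq s'(\alpha)$; then $A^s_\alpha$ is disjoint from $A^{s'}_\alpha$ and almost disjoint from every $A^{s'}_\beta$ with $\beta\neq\alpha$, so any free ultrafilter $\cU$ containing $A^s_\alpha$ contains no member of $\{A^{s'}_\beta:\beta<\kappa\}$, whence $g_s(\cU)=d_\alpha\neq d_\infty=g_{s'}(\cU)$. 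By (b) this yields $2^\kappa\le|C(\omega^\ast,B_{Y^\ast})|\le|Y^\ast|$.

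I expect the cardinal arithmetic and the topological verifications (continuity of the $g_s$, uniqueness of $\wt f$) to be routine; the one step that needs genuine care is the construction in (c), namely arranging $2^\kappa$ almost disjoint families in such a way that two distinct ones are already separated by a single ultrafilter — which is precisely what the splitting $C_\alpha=C_\alpha^0\cup C_\alpha^1$ is designed to guarantee.
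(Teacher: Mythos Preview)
Your proofs of (a) and (b) follow exactly the paper's line: both use Theorem~\ref{main} together with the identity $|Y^\ast|^{\aleph_0}=|Y^\ast|$, and for (b) the paper likewise remarks that $|C(\beta\omega,Y^\ast)|\le |Y^\ast|^\omega$. Your added detail (uniqueness of the extension, the explicit injection $f\mapsto(\wt f(n))_n$) is correct and welcome but not new.

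For (c) you take a genuinely different, more hands-on route. The paper simply observes that $\bA(\kappa)$ has $2^\kappa$ closed subsets, and that for $\kappa\le\con$ each of them (being a one-point compactification of a discrete set of size $\le\con$, or finite) is a continuous image of $\omega^\ast$; the resulting $2^\kappa$ maps into $\bA(\kappa)\sub B_{Y^\ast}$ have pairwise distinct ranges, hence are distinct, and (b) finishes the job. Your construction instead produces $2^\kappa$ maps all with the \emph{same} range $C\cong\bA(\kappa)$, by splitting each member of a fixed almost disjoint family in two and using the $2^\kappa$ selectors $s\in 2^\kappa$ to pick halves. The verification that $g_s\neq g_{s'}$ for $s\neq s'$ is correct (the chosen $A^s_\alpha$ is disjoint from $A^{s'}_\alpha$ and almost disjoint from every other $A^{s'}_\beta$, so a free ultrafilter through $A^s_\alpha$ separates them). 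Both arguments are valid; the paper's is shorter because ``distinct images $\Rightarrow$ distinct maps'' avoids the splitting trick entirely, while yours has the mild virtue of being a single uniform construction rather than an appeal to surjections onto varying targets.
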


\begin{proof}
Recall first that $|X|^\omega=|X|$ for every Banach space $X$, see \cite[p. 184]{Co71}; in particular, we have $|Y^\ast|^\omega=|Y^\ast|$.
Thus {\em (\ref{1})} follows directly from Theorem \ref{main}.

For {\em (\ref{2})} we use
$(iv)$ of Theorem \ref{main}  and the fact that $|C(\beta\omega, Y^\ast)|\le |Y^\ast|^\omega=|Y^\ast|$.

For {\em (\ref{5})} observe that if $\kappa\leq\mathfrak{c}$ then $\bA(\kappa)$ contains $2^\kappa$ many closed subsets and each of them
is a continuous image of $\omega^\ast$. In particular, $|C(\omega^\ast, \bA(\kappa))|\ge 2^\kappa$ and
if $\bA(\kappa)$ embeds into $B_{Y^\ast}$ then $|C(\omega^\ast, B_{Y^\ast})|\ge 2^\kappa$ so $|Y^\ast|\ge 2^\kappa$ by {\em (\ref{2})}.
\end{proof}

\begin{lemma}\label{when:3}
If $K$ is a compact space of weight $\omega_1$ then $|C(\omega^\ast,K)|\ge 2^{\omega_1}$.
\end{lemma}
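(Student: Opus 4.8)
The plan is to reduce to the case of a scattered compact space, namely $\omega^*$ itself mapped onto a fixed model compactum, and then count continuous images. The cleanest route uses Parovi\v{c}enko's theorem: since $w(K)\le\omega_1$, the compactum $K$ is a continuous image of $\omega^*$, so at least there is \emph{one} continuous surjection $\omega^*\lra K$. The task is to produce $2^{\omega_1}$ pairwise distinct such maps. First I would fix a surjection $q:\omega^*\lra K$. The idea is to precompose $q$ with a large family of self-maps (or at least self-surjections onto suitable subspaces) of $\omega^*$ that are pairwise distinct and remain distinct after composing with $q$; but a more robust approach is to choose inside $K$ a faithfully indexed family of $\omega_1$ points and use the freedom in how $\omega^*$ maps to get $2^{\omega_1}$ different ``labelings''.

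Here is the approach I would actually carry out. Since $w(K)=\omega_1$, fix a family $\{U_\alpha:\alpha<\omega_1\}$ of open subsets of $K$ forming (together with finite Boolean combinations) a base, and pick points $x_\alpha\in K$ witnessing that the $U_\alpha$ are needed, so that $\{x_\alpha:\alpha<\omega_1\}$ is a set of $\omega_1$ points of $K$ no one of which lies in the closure of countably many of the others (this can be arranged by a standard elementary-submodel / transfinite recursion argument using $w(K)=\omega_1$; if $K$ had such a set of size $\omega_1$ with an isolated-type separation it is immediate, otherwise use a left-separated subspace of size $\omega_1$, which every space of weight $\omega_1$ and uncountable cellularity-type invariant contains — more carefully, $w(K)=\omega_1$ gives a subspace that is left-separated in type $\omega_1$). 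Then for each $S\subseteq\omega_1$ I would build a continuous surjection $g_S:\omega^*\lra K$ in such a way that $S$ is recoverable from $g_S$, forcing $S\ne S'\Rightarrow g_S\ne g_{S'}$ and hence $|C(\omega^*,K)|\ge 2^{\omega_1}$.

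To construct $g_S$: partition $\omega$ into infinitely many infinite pieces, use $\omega^*\cong(\omega\times\omega)^*$ as in the proof of Theorem~\ref{main2}, or better, use the fact that $\omega^*$ maps onto $\omega^*\times K$ (again Parovi\v{c}enko, since $\omega^*\times K$ has weight $\omega_1$) together with a fixed surjection $\omega^*\lra \mathbb A(\omega_1)$. Concretely: $\mathbb A(\omega_1)$ has weight $\omega_1$, so there is a continuous surjection $h:\omega^*\lra\mathbb A(\omega_1)$; for $S\subseteq\omega_1$ define a map $r_S:\mathbb A(\omega_1)\lra K$ sending the $\alpha$-th isolated point to $x_\alpha$ if $\alpha\in S$ and to (say) a fixed point $x_0$ if $\alpha\notin S$, and the point at infinity to a cluster point — but this need not be continuous. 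To fix continuity I would instead work the other way: realize $K$ as a remainder, $K=\gamma\omega\setminus\omega$ for a compactification $\gamma\omega$, enumerate a base of $\gamma\omega$ indexed by $\omega_1$, and for each $S$ thin out which of the countably-generated ``clopen traces'' one keeps, building $2^{\omega_1}$ non-isomorphic subalgebras of $P(\omega)$ whose Stone spaces all map onto $K$; distinctness of the resulting quotient maps $\omega^*\lra K$ then follows by tracking, for each $\alpha$, whether the preimage of $U_\alpha$ under $g_S$ has a prescribed (definable-from-$S$) clopen approximation.

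\textbf{Main obstacle.} The hard part is the continuity/surjectivity bookkeeping: it is easy to write down $2^{\omega_1}$ \emph{set-theoretic} surjections $\omega^*\lra K$, but ensuring each is continuous while keeping them pairwise distinct requires organizing the construction so that the ``$S$-coordinate'' sits inside a genuinely topological feature of $g_S$ — e.g. the weak$^*$ (here: topological) closure relations among a fixed $\omega_1$-sized set of fibers, or the Boolean algebra $g_S^{-1}[\mathrm{clop}]$ traced back into $P(\omega)/\mathrm{fin}$. I expect the slickest write-up to go through the Boolean-algebra picture: $\gamma\omega$ corresponds to a subalgebra $\mathfrak B\subseteq P(\omega)$ with $|\mathfrak B|=\omega_1$, one shows there are $2^{\omega_1}$ subalgebras $\mathfrak B_S\subseteq P(\omega)$ each containing a fixed countable ``core'' and each surjecting onto $\mathfrak B$ (equivalently, whose Stone space surjects onto $K$), with $\mathfrak B_S\ne\mathfrak B_{S'}$ for $S\ne S'$ \emph{as subalgebras of $P(\omega)$}, hence giving distinct quotient maps; the counting of such subalgebras is a routine almost-disjoint / independent-family argument of the kind already used in Sections~\ref{ad}–\ref{tsad}.
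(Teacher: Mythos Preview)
Your proposal is not a proof but a sequence of unfinished sketches, and you yourself correctly flag the obstacle: none of your three routes actually produces $2^{\omega_1}$ \emph{continuous} maps $\omega^\ast\to K$ that are pairwise distinct. The first route (precomposing a fixed surjection with self-maps of $\omega^\ast$) is abandoned; the second (sending isolated points of $\bA(\omega_1)$ to a left-separated $\omega_1$-sequence in $K$) visibly fails continuity, as you note; and the third (Boolean-algebra picture) is left at ``routine almost-disjoint / independent-family argument,'' which is not routine here at all. Distinct subalgebras $\mathfrak B_S\subseteq P(\omega)$ give distinct compactifications $\gamma_S\omega$, but you need the \emph{restricted} maps $\omega^\ast\to K$ to differ, and you also need each $\gamma_S\omega$ to have remainder exactly $K$ (equivalently, each $\mathfrak B_S$ to be sandwiched between $\mathfrak B$ and $P(\omega)$ in the right way). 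Neither the existence of $2^{\omega_1}$ such subalgebras nor the distinctness of the resulting remainder maps is justified.

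The paper's argument avoids all of this by an inverse-system construction. Write $K$ as the limit of a continuous inverse system $\langle K_\alpha:\alpha<\omega_1\rangle$ of \emph{metrizable} compacta with non-injective successor bonding maps $\pi^{\alpha+1}_\alpha$. Build, by recursion on $\alpha<\omega_1$, a binary tree of continuous surjections $g_\sigma:\omega^\ast\to K_\alpha$ for $\sigma\in 2^\alpha$, compatible with the bonding maps. At a successor step one invokes the projectivity of $\omega^\ast$ with respect to metrizable compacta (Theorem~\ref{prel:2.5}) to lift $g_\sigma$ to $g_{\sigma\sfrown 0}:\omega^\ast\to K_{\alpha+1}$; then, using that fibres of maps from $\omega^\ast$ onto metric spaces have nonempty interior, one modifies this lift on a nonempty clopen set to get $g_{\sigma\sfrown 1}\neq g_{\sigma\sfrown 0}$. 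At limits and at $\omega_1$ the map is forced by continuity of the system. This yields $g_\tau:\omega^\ast\to K$ for every $\tau\in 2^{\omega_1}$, pairwise distinct because they already differ at some finite-level restriction. The point you were missing is precisely this use of metrizable stages together with the projectivity Theorem~\ref{prel:2.5}: it is what buys continuity for free while still leaving enough freedom (a clopen set on which to change the value) to branch at every step.
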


\begin{proof}
Recall first that if $f$ is a continuous function mapping $\omega^\ast$ onto a metric space $L$ then for every $y\in L$
the set $f^{-1}(y)$ is a $G_\delta$ subset of $\omega^\ast$ and, consequently, it has a nonempty interior.

As $K$ is of weight $\omega_1$, we can express $K$  as the limit of a continuous inverse system $\langle K_\alpha: \alpha<\omega_1\rangle$
of metric compacta; for $\beta<\alpha$, denote by $\pi^\alpha_\beta:K_\alpha\lra K_\beta$ bonding maps of the  system. We can assume that
for every $\alpha$ the mapping $\pi_\alpha^{\alpha+1}:K_{\alpha+1}\lra K_\alpha$ is not injective.

We construct by induction on $\alpha<\omega_1$ continuous surjections $g_\sigma:\omega^\ast\lra K_\alpha$ , where $\sigma\in 2^\alpha$, so that
$g_{\sigma\sfrown 0}\neq g_{\sigma\sfrown 1}$ and the following diagram commutes (for $i=0,1$)
$$\xymatrix{
K_\alpha &&K_{\alpha+1}\ar[ll]_{\pi^{\alpha+1}_\alpha}\\
\omega^\ast\ar_{g_\sigma}[u]  \ar_{g_{\sigma\sfrown i}}[urr]&& }$$
We start the construction with any constant function $g_\emptyset: \omega^\ast\lra K_0$. At the successor stage,  given $g_\sigma$, the existence of $g_{\sigma\sfrown 0}$ follows directly from Theorem \ref{prel:2.5}.
To define $g_{\sigma\sfrown 1}\neq g_{\sigma\sfrown 0}$ take $t\in K_\alpha$ and distinct $s_0, s_1\in K_{\alpha+1}$ such that
$\pi^{\alpha+1}_\alpha(s_i)=t$. Then, by the remark above, $g_{\sigma\sfrown 0}=s_0$ on some
nonempty open set $V\subset \omega^\ast$. Take a nonempty clopen set $U$ properly contained in  $V$ and declare that
$g_{\sigma\sfrown 1}$ equals $s_1$ on the set $V$ and $g_{\sigma\sfrown 1}=g_{\sigma\sfrown 0}$ outside $V$.
Then the functions $g_\sigma$ are as required.

At the limit stage $\alpha<\omega_1$, by the continuity of the inverse system $\langle K_\alpha: \alpha<\omega_1\rangle$,  for $\sigma\in 2^\alpha$, there is a unique continuous  function $g_\sigma:\omega^\ast\lra K_\alpha$ such that the following diagram commutes, for every $\beta<\alpha$,
 $$\xymatrix{
K_\beta &&K_\alpha\ar[ll]_{\pi_\beta^\alpha}\\
\omega^\ast\ar_{g_{\sigma|\beta}}[u]  \ar_{g_{\sigma}}[urr]&& }$$

Now, take continuous mappings $\pi_\alpha:K\lra K_\alpha$ resulting from the inverse system and note that, by our construction,
for every $\tau\in 2^{\omega_1}$ there exists a unique continuous  function $g_\tau:\omega^\ast\lra K$ such that, for every $\alpha<\omega_1$, we have a commuting diagram
 $$\xymatrix{
K_\alpha &&K\ar[ll]_{\pi_\alpha}\\
\omega^\ast\ar_{g_{\tau|\alpha}}[u]  \ar_{g_{\tau}}[urr]&& }$$
In particular, $g_\tau\neq g_{\tau'}$ whenever $\tau\neq\tau'$, and the proof is complete.
\end{proof}

\begin{remark}
Under CH Lemma \ref{when:3} asserts that $|C(\omega^\ast,K)| = 2^\con$ whenever the weight of $K$ equals $\con$.
Let us note that this is not provable in the usual set theory.

In the Cohen model (after adding $\omega_2$ Cohen reals to a model of GCH) $\con=\omega_2$ and $2^{\omega_1}=\con$. Moreover, in the space $\omega^\ast$ any  strictly increasing
sequence of clopen sets is of length at most $\omega_1$; this result is a consequence of  Kunen's theorem
stating that in that model the set $\{(\alpha,\beta): \alpha<\beta<\omega_2\}$ is not in the $\sigma$-algebra
of subsets of $\omega_2\times\omega_2$ generated by all rectangles, see Lemma 5.3 from \cite{DH01}  and the remarks following it.

Consequently, in the Cohen model if we take the space $K=[0,\omega_2]$ (of ordinal numbers $\le\omega_2$ equipped with the order topology) then
$w(K)=\con$ and $|C(\omega^\ast,K)|=\con$. Indeed,
for a continuous function $g:\omega^\ast\lra [0,\omega_2]$ the image $I(g)=g(\omega^\ast)$ must be of size $\le\omega_1$ since, otherwise,
the chain of clopens $\{g^{-1}[0,\xi]:\xi\in I(g)\}$ would be strictly increasing.
Hence $g$ is uniquely determined by $I(g)$ and
the chain $\{g^{-1}[0,\xi]:\xi\in I(g)\}$. As  $2^{\omega_1}=\con$, this gives $|C(\omega^\ast,K)|=\con$.
\end{remark}

However, we do not know the answer to the following

\begin{problem}\label{many cont maps}
	Is $|C(\omega^\ast,K)|\ge 2^{\omega_1}$ for any nonmetrizable compact space $K$?
\end{problem}

Note that is known that we cannot prove in ZFC that every nonmetrizable compact space $K$ contains a closed subset $L$ of weight $\omega_1$, see Remark \ref{nonreflex}.
\medskip

Using Lemma \ref{when:3} we can formulate the following cardinal test for the existence of nontrivial twisted sums.

\begin{corollary}\label{when:4}
	If $Y$ is a Banach space of density $\omega_1$ and  $|Y^\ast|<2^{\omega_1}$  then $\ext(Y,c_0)\neq 0$.
\end{corollary}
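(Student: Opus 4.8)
The plan is to derive Corollary~\ref{when:4} directly from Lemma~\ref{when:3} together with condition $(iv)$ (equivalently $(ii)$) of Theorem~\ref{main}. The strategy is to argue by contraposition: assume $\ext(Y,c_0)=0$ and show that $|Y^\ast|\ge 2^{\omega_1}$ whenever $\dens(Y)=\omega_1$. First I would observe that if $\dens(Y)=\omega_1$ then the weak$^\ast$ topology on $B_{Y^\ast}$ has weight exactly $\omega_1$: the inequality $w(B_{Y^\ast},weak^\ast)\le\dens(Y)$ is standard, and weight $<\omega_1$ would force $Y$ to be separable, contradicting $\dens(Y)=\omega_1$. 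So $K:=(B_{Y^\ast},weak^\ast)$ is a compact space of weight $\omega_1$, and Lemma~\ref{when:3} applies to give $|C(\omega^\ast,K)|\ge 2^{\omega_1}$.

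Next I would invoke Theorem~\ref{main}, specifically the implication $(i)\to(iv)$: since $\ext(Y,c_0)=0$, every continuous map $\omega^\ast\lra (Y^\ast,weak^\ast)$ extends to a continuous map $\beta\omega\lra(Y^\ast,weak^\ast)$. In particular every $f\in C(\omega^\ast,B_{Y^\ast})$ extends to some $\widehat f\in C(\beta\omega,Y^\ast)$, and $\widehat f$ is determined by its restriction to $\omega$, i.e.\ by a sequence in $Y^\ast$. This gives an injection (or at worst a finite-to-one or countable-to-one map, which is harmless) from $C(\omega^\ast,B_{Y^\ast})$ into $(Y^\ast)^\omega$; more carefully, distinct $f$'s can share extensions, but each $f$ has at least one extension, so mapping $f$ to the set of its extensions and then noting $|C(\beta\omega,Y^\ast)|\le|(Y^\ast)^\omega|=|Y^\ast|^\omega=|Y^\ast|$ (using $|Y^\ast|^\omega=|Y^\ast|$ as recalled in the proof of Corollary~\ref{when:2}) shows $|C(\omega^\ast,B_{Y^\ast})|\le|Y^\ast|$. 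Combining with the lower bound from Lemma~\ref{when:3} yields $|Y^\ast|\ge|C(\omega^\ast,B_{Y^\ast})|\ge 2^{\omega_1}$, contradicting the hypothesis $|Y^\ast|<2^{\omega_1}$. Hence $\ext(Y,c_0)\neq 0$.

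The only mild subtlety — and the step I would be most careful about — is the bookkeeping in the injectivity/counting argument: a continuous $f:\omega^\ast\to B_{Y^\ast}$ may extend to $\beta\omega$ in many ways, so one should phrase the inequality as $|C(\omega^\ast,B_{Y^\ast})|\le|C(\beta\omega,Y^\ast)|$ via the restriction-to-$\omega^\ast$ map $R\colon \widehat f\mapsto\widehat f|_{\omega^\ast}$, which by $(i)\to(iv)$ is \emph{surjective} onto $C(\omega^\ast,B_{Y^\ast})$; surjectivity of $R$ immediately gives $|C(\omega^\ast,B_{Y^\ast})|\le|C(\beta\omega,Y^\ast)|\le|Y^\ast|^\omega=|Y^\ast|$. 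Everything else is routine: the weight computation for $B_{Y^\ast}$, the cardinal arithmetic $|Y^\ast|^\omega=|Y^\ast|$, and a direct appeal to the two cited results. I do not expect any genuine obstacle; the content of the corollary is entirely carried by Lemma~\ref{when:3} and Theorem~\ref{main}.
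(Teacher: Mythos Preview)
Your proposal is correct and follows essentially the same approach as the paper. The only cosmetic difference is that the paper invokes Corollary~\ref{when:2}(\ref{2}) (which says $|C(\omega^\ast,B_{Y^\ast})|\le|Y^\ast|$ whenever $\ext(Y,c_0)=0$) as a black box, whereas you unpack its short proof inline via Theorem~\ref{main}(iv) and the identity $|Y^\ast|^\omega=|Y^\ast|$.
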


\begin{proof}
Since $Y$ has density $\omega_1$, $(B_{Y^\ast}, weak^\ast)$ has weight $\omega_1$ and
$|C(\omega^\ast, B_{Y^\ast})|\ge 2^{\omega_1}$ by Lemma \ref{when:3}. Hence,
 $\ext(Y,c_0)\neq 0$ by Corollary \ref{when:2}{\em (\ref{2})}.
\end{proof}

If $Y$ is an Asplund space, then $|Y|=|Y^\ast| = dens(Y)^\omega$, so

\begin{corollary}\label{when:asplund}
	If $\mathfrak{c}<2^{\omega_1}$, then $\ext(Y,c_0)\neq 0$ for any nonseparable Asplund space $Y$ of density $\omega_1$.
\end{corollary}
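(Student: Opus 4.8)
The plan is to deduce the statement immediately from Corollary~\ref{when:4}, which already guarantees $\ext(Y,c_0)\neq 0$ once we know that $Y$ has density $\omega_1$ and $|Y^\ast|<2^{\omega_1}$. So the only thing to verify is the cardinality bound $|Y^\ast|<2^{\omega_1}$ for a nonseparable Asplund space $Y$ with $\dens(Y)=\omega_1$. For this I would invoke the fact recorded just before the statement: for an Asplund space $Y$ one has $|Y^\ast|=\dens(Y)^\omega$, so here $|Y^\ast|=\omega_1^\omega$.

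The remaining step is the routine cardinal-arithmetic identity $\omega_1^\omega=\con$. Since $\omega_1\le\con=2^\omega$, we get
\[ \con=2^\omega\le\omega_1^\omega\le(2^\omega)^\omega=2^\omega=\con, \]
hence $\omega_1^\omega=\con$. Therefore $|Y^\ast|=\con$, and the hypothesis $\con<2^{\omega_1}$ yields $|Y^\ast|<2^{\omega_1}$. Corollary~\ref{when:4} now gives $\ext(Y,c_0)\neq 0$, which is exactly the assertion.

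I do not expect any real obstacle: the proof is a one-line substitution into Corollary~\ref{when:4} together with the computation above. The only point needing a word of care is the identity $|Y^\ast|=\dens(Y)^\omega$ for Asplund spaces, which rests on the fact that Asplundness of $Y$ forces $\dens(Y^\ast,\|\cdot\|)=\dens(Y)$, so that $Y^\ast$ is an infinite-dimensional Banach space of density $\omega_1$ and hence has cardinality $\omega_1^\omega$; this is precisely the statement quoted in the line preceding the corollary, so no new argument is required. Note also that the non-separability hypothesis on $Y$ is not used beyond what is already contained in $\dens(Y)=\omega_1$.
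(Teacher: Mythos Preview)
Your argument is correct and is essentially identical to the paper's: the paper records that $|Y^\ast|=\dens(Y)^\omega$ for Asplund $Y$ and then states the corollary as an immediate consequence of Corollary~\ref{when:4}, which is precisely what you do (including the computation $\omega_1^\omega=\con$).
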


\begin{corollary}\label{ADsupplement}
	If $\ext(Y,c_0)=0$, $Y^\ast$ contains a copy $K$ of $\mathbb{A}(\kappa)$ for some $\kappa$, and $\cA$ is an almost disjoint family of subsets of $\omega$ of size $\kappa$,
	then there is $L$ with $K\sub L\sub Y^\ast$ such that $L$ is homeomorphic to the Aleksandrov-Urysohn space
	associated to $\cA$.
\end{corollary}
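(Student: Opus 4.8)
The plan is to display $\bA\bU(\cA)$ as a countable discrete extension of a homeomorphic copy of $\bA(\kappa)$, to carry this extension over to the given copy $K\sub Y^\ast$, and then to apply the equivalence $(i)\Leftrightarrow(iii)$ of Theorem \ref{main}.

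First I would enumerate $\cA=\{A_\xi:\xi<\kappa\}$ (bijectively, since $|\cA|=\kappa$) and consider, inside $\bA\bU(\cA)=\omega\cup\{A_\xi:\xi<\kappa\}\cup\{\infty\}$, the subspace $M=\{A_\xi:\xi<\kappa\}\cup\{\infty\}$. I claim that $M$ is homeomorphic to $\bA(\kappa)$ and that $\bA\bU(\cA)$ is a countable discrete extension of it. Each $A_\xi$ is isolated in $M$, since a basic neighbourhood $\{A_\xi\}\cup(A_\xi\sm F)$ meets $M$ only at $A_\xi$. As $\bA\bU(\cA)$ is compact Hausdorff, every finite subset of $\{A_\xi:\xi<\kappa\}$ is closed; and every compact subset of the locally compact space $\omega\cup\{A_\xi:\xi<\kappa\}$ meets $\{A_\xi:\xi<\kappa\}$ in a finite set, as one sees by covering such a set with the compact neighbourhoods $\{A_\xi\}\cup A_\xi$ of the points $A_\xi$ together with the singletons of $\omega$ and passing to a finite subcover. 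It follows that the traces on $M$ of the neighbourhoods of $\infty$ in $\bA\bU(\cA)$ are exactly the cofinite subsets of $M$ containing $\infty$, so $M\cong\bA(\kappa)$. Finally $M$ is closed in $\bA\bU(\cA)$ (its complement is the open set $\omega$), hence compact, while $\bA\bU(\cA)\sm M=\omega$ is a countable infinite discrete set; this proves the claim. This topological bookkeeping is the only step that needs any care.

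Now I would transport the picture onto $K$. Choosing a homeomorphism $\vf\colon M\lra K$ (it exists because $M\cong\bA(\kappa)\cong K$) and pulling back the topology of $\bA\bU(\cA)$ along the bijection $K\cup\omega\lra\bA\bU(\cA)$ that restricts to $\vf^{-1}$ on $K$ and to the identity on $\omega$ (taking $\omega$ disjoint from $Y^\ast$), I obtain a compact space $L_0\supset K$ in which $K$ keeps its original $weak^\ast$ topology, with $L_0\sm K=\omega$ a countable infinite discrete set and $L_0$ homeomorphic to $\bA\bU(\cA)$. Thus $L_0$ is a countable discrete extension of the $weak^\ast$ compact set $K\sub Y^\ast$ that is homeomorphic to the Aleksandrov--Urysohn space associated with $\cA$. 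Since $\ext(Y,c_0)=0$, condition $(iii)$ of Theorem \ref{main} allows us to realize $L_0$ inside $(Y^\ast,weak^\ast)$: the inclusion map $K\lra Y^\ast$ extends to a homeomorphic embedding $\imath\colon L_0\lra Y^\ast$, and then $L=\imath(L_0)$ satisfies $K=\imath(K)\sub L\sub Y^\ast$ and $L\cong L_0\cong\bA\bU(\cA)$, as required. Once the identification $M\cong\bA(\kappa)$ is in place the rest is formal, so I do not expect any obstacle beyond that first step.
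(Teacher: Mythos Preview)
Your proof is correct and follows exactly the approach of the paper: the paper's proof is the single sentence ``Simply $\bA\bU(\cA)$ is homeomorphic to a countable discrete extension of $K$ so we may apply Theorem \ref{main}'', and you have carefully unpacked precisely this, verifying that $M=\{A_\xi:\xi<\kappa\}\cup\{\infty\}$ is a copy of $\bA(\kappa)$ inside $\bA\bU(\cA)$ with countable discrete complement, and then invoking condition $(iii)$ of Theorem \ref{main}.
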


\begin{proof}
	Simply $\bA\bU(\cA)$ is homeomorphic to a countable discrete extension of $K$ so we may apply Theorem \ref{main}.
\end{proof}

\begin{corollary}\label{special:2}
	If $(B_{Y^\ast},weak^\ast)$ is a nonmetrizable separable Rosenthal compact space, then $\ext(Y,c_0)\neq 0$
\end{corollary}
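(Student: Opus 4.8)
The plan is to derive the statement from Theorem~\ref{main} together with the fact that Rosenthal compactness is hereditary.

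First I would record the structural facts that make the characterization of Theorem~\ref{main} usable here. For each $n\ge 1$ the homothety $y^\ast\mapsto ny^\ast$ is a $w^\ast$-homeomorphism of $B_{Y^\ast}$ onto $n\cdot B_{Y^\ast}$, so every $n\cdot B_{Y^\ast}$ is a Rosenthal compact space; since $Y^\ast=\bigcup_n n\cdot B_{Y^\ast}$, each $w^\ast$-compact subset of $Y^\ast$ lies in some $n\cdot B_{Y^\ast}$ and is therefore a subspace of a Rosenthal compact space, hence itself Rosenthal compact. In particular $|Y^\ast|\le\con$, because Rosenthal compacta embed in $B_1(\omega^\omega)$, a space of cardinality $\con$. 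Now assume, towards a contradiction, that $\ext(Y,c_0)=0$. By Theorem~\ref{main}\,$(iii)$ every countable discrete extension of an arbitrary $w^\ast$-compact subset of $Y^\ast$ can be realized inside $(Y^\ast,w^\ast)$; by the previous sentence such a realization is a $w^\ast$-compact subspace of $Y^\ast$, hence Rosenthal compact, hence Fr\'echet--Urysohn by the Bourgain--Fremlin--Talagrand theorem. So it suffices to produce a $w^\ast$-compact $R\subseteq B_{Y^\ast}$ admitting a countable discrete extension that is \emph{not} Fr\'echet--Urysohn.

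Constructing such an extension is, I expect, the main obstacle, and it is where the structure theory of Rosenthal compacta is needed. Note first that $(B_{Y^\ast},w^\ast)$, being convex, is connected, so the ``easy'' non-metrizable Rosenthal compactum --- the split interval, which is zero-dimensional --- is excluded; one should instead locate inside $(B_{Y^\ast},w^\ast)$ an uncountable discrete subspace $D=\{x_\xi:\xi<\kappa\}$, chosen large enough (of size $\con$, or at least of a size realizing some maximal almost disjoint family) that one may fix a maximal almost disjoint family $\cA=\{A_\xi:\xi<\kappa\}$ on $\omega$ indexed by a subset of $D$. Then the countable discrete extension $L=R\cup\omega$ of $R:=\overline{D}^{\,w^\ast}$, obtained by adjoining to a basic $w^\ast$-neighbourhood of each $x_\xi$ the cofinite tails of $A_\xi$ (with the elements of $\omega$ isolated), contains around $D$ a copy of the Aleksandrov--Urysohn space $\bA\bU(\cA)$ of a \emph{maximal} almost disjoint family; cf.\ Corollary~\ref{ADsupplement} and the remark in \S\ref{prel:adf}. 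Hence $L$ is not Fr\'echet--Urysohn, which contradicts the previous paragraph, and therefore $\ext(Y,c_0)\neq 0$.

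A parallel, purely cardinal-arithmetic route is also available and may be cleaner to write: from $\ext(Y,c_0)=0$ and Corollary~\ref{when:2}(\ref{2}) one gets $|C(\omega^\ast,B_{Y^\ast})|\le|Y^\ast|\le\con$, which one contradicts by exhibiting, again via the structure theory, a closed subspace of $B_{Y^\ast}$ of weight $\omega_1$ (so that $|C(\omega^\ast,B_{Y^\ast})|\ge 2^{\omega_1}$ by Lemma~\ref{when:3}) and refining the count --- by the same analysis of the $w^\ast$-compact Rosenthal subspaces of $Y^\ast$ --- to $|C(\omega^\ast,B_{Y^\ast})|\ge 2^{\con}$. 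Either way, the real content lies in converting the bare hypotheses ``separable, non-metrizable, Rosenthal'' (plus connectedness of the dual ball) into a concrete non-Fr\'echet--Urysohn countable discrete extension supported on a closed subspace of $B_{Y^\ast}$; once that is in hand, the rest is a direct application of Theorem~\ref{main} and the hereditarity of Rosenthal compactness.
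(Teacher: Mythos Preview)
Your overall strategy coincides with the paper's: assume $\ext(Y,c_0)=0$, use Theorem~\ref{main} together with hereditary Rosenthal compactness of $w^\ast$-compact subsets of $Y^\ast$ and the Bourgain--Fremlin--Talagrand theorem to force every realized countable discrete extension to be Fr\'echet--Urysohn, and then contradict this by producing an $\bA\bU(\cA)$-type extension from a maximal almost disjoint family. The gap is precisely the step you flag as ``the main obstacle'' and then do not resolve. To build a copy of $\bA\bU(\cA)$ as a countable discrete extension of some $R\subseteq B_{Y^\ast}$ you need far more than an uncountable discrete set $D$: you need $\overline{D}$ to be a one-point compactification, i.e.\ a copy of $\bA(\kappa)$, and you need $\kappa=\con$ (or at least $\kappa\ge$ the minimal size of a MAD family). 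Nothing in your argument secures this. Connectedness of $B_{Y^\ast}$ only rules out the split interval; it does not by itself produce large discrete sets with a unique accumulation point, and a bare discrete $D$ whose closure is complicated does not obviously yield a non-Fr\'echet--Urysohn extension in the way you describe.

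The paper closes this gap by invoking a theorem of Todorcevic: in a separable Rosenthal compactum every non-$G_\delta$ point is the unique accumulation point of a copy of $\bA(\con)$. Since $Y$ is nonseparable, $0$ is a non-$G_\delta$ point of $(B_{Y^\ast},w^\ast)$, so one gets $\bA(\con)\subseteq B_{Y^\ast}$ with $0$ as its limit; then Corollary~\ref{ADsupplement} with a MAD family finishes exactly as you intend. Your parallel cardinal-arithmetic route also has a hole: Lemma~\ref{when:3} only gives $|C(\omega^\ast,B_{Y^\ast})|\ge 2^{\omega_1}$, and you offer no mechanism for ``refining the count'' to $2^{\con}$; since $2^{\omega_1}=\con$ is consistent, that line does not give a ZFC proof.
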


\begin{proof}
	Since $Y$ is not separable, it is easily seen that the zero vector is a non-$G_\delta$ point of $(B_{Y^\ast},weak^\ast)$.
By a theorem of Todorcevic \cite[Theorem 9]{Tod},   $(B_{Y^\ast},w^\ast)$ contains a copy $K$ of $\bA(\con)$ whose $0$ is the only accumulation point.
 Take a maximal almost disjoint family $\mathcal{A}$ of subsets of $\omega$ and let $L=K\cup\omega$ be the Aleksandrov-Urysohn space associated to
 $\cA$. Then $L$ is not Fr\'echet-Urysohn, see Section \ref{prel:adf}. But Rosenthal compact spaces are Fr\'echet-Urysohn by the theorem of Bourgain, Fremlin and Talagrand \cite{BFT}, so $L$ cannot be embedded inside any ball of $Y^\ast$, and $\ext(Y,c_0)\neq 0$ by Corollary \ref{ADsupplement}.
 \end{proof}

Recall that, for a separable Rosenthal compact space $K$, the space $M_1(K)$ is again a separable Rosenthal compact space (cf. \cite{Mar03}), hence we immediately obtain the following

\begin{corollary}\label{special:1}
$\ext(C(K),c_0)\neq 0$ for
a nonmetrizable separable Rosenthal compact space $K$.
\end{corollary}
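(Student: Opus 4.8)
The plan is to reduce Corollary \ref{special:1} to Corollary \ref{special:2} by showing that the hypothesis on $K$ transfers to the dual unit ball $(B_{C(K)^\ast}, weak^\ast)$. Concretely, we have $C(K)^\ast = M(K)$ and $B_{C(K)^\ast} = M_1(K)$, so it suffices to check two things: first, that $(M_1(K), weak^\ast)$ is a separable Rosenthal compact space, and second, that it is nonmetrizable. The second point is immediate, since $K$ is homeomorphic to $\Delta_K = \{\delta_x : x\in K\} \subset M_1(K)$ (as noted in Section \ref{prel}), so if $M_1(K)$ were metrizable then $K$ would be metrizable too, contrary to assumption.

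For the first point, the key input is the cited fact that for a separable Rosenthal compact space $K$, the space $M_1(K)$ (equivalently $P(K)$, or one handles signed measures by writing $M_1(K)$ as a continuous image of a product involving $P(K)$) is again a separable Rosenthal compact space; this is attributed to \cite{Mar03}. So I would simply invoke that: separability of $K$ and the Rosenthal property of $K$ pass to $M_1(K)$. The underlying reason, which I would recall in one sentence rather than reprove, is that $P(K)$ embeds into $B_1(\omega^\omega)$ whenever $K$ does, because integration against a first-Baire-class-valued family stays first Baire class, and separability is preserved because finitely supported rational convex combinations of a countable dense subset of $K$ are weak$^\ast$-dense in $P(K)$.

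Having established that $(B_{C(K)^\ast}, weak^\ast) = M_1(K)$ is a nonmetrizable separable Rosenthal compact space, I apply Corollary \ref{special:2} with $Y = C(K)$ to conclude $\ext(C(K), c_0)\neq 0$.

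The only real obstacle is making sure the measure-space result is being applied in the form actually needed. Corollary \ref{special:2} is stated for the dual unit ball, and $B_{C(K)^\ast}$ is the unit ball of signed measures $M_1(K)$, not the probability measures $P(K)$; so I would either cite the version of the \cite{Mar03} result that already covers signed measures, or note the standard reduction that $M_1(K)$ is a continuous image (hence Rosenthal-and-separable if $P(K)$ is, since both properties are preserved by continuous images among compacta — Rosenthal compactness is, by Bourgain–Fremlin–Talagrand, and separability trivially) of a closed subset of $[0,1]\times P(K)\times P(K)$ via $(t,\mu,\nu)\mapsto t\mu - (1-t)\nu$. Either route is routine, so the proof is genuinely just the two-line application indicated above.
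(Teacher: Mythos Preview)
Your main line of argument is exactly the paper's: invoke the cited fact from \cite{Mar03} that $M_1(K)$ is again separable Rosenthal compact, observe it is nonmetrizable because it contains a copy of $K$, and apply Corollary~\ref{special:2} with $Y=C(K)$. That is correct and there is nothing to add.

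One caveat about your alternative route: you claim that Rosenthal compactness is preserved under continuous images of compacta ``by Bourgain--Fremlin--Talagrand''. This is not a BFT theorem, and in fact whether the class of Rosenthal compacta is closed under continuous images is a well-known open problem. So the reduction of $M_1(K)$ to $P(K)$ via the surjection $(t,\mu,\nu)\mapsto t\mu-(1-t)\nu$ does not work as stated. Stick to the direct citation for $M_1(K)$; the result in \cite{Mar03} (going back to Godefroy) is proved by exhibiting an explicit embedding of the ball of measures into $B_1(\omega^\omega)$, not by a continuous-image argument.
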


Let us recall that,  writing $P(K)$ for  the set of all regular probability Borel measures on $K$ of size $>1$, we have
$|P(K)|=|M_1(K)|$.

\begin{corollary}\label{some:1}
	Assume that  $\mathfrak{c}<2^{\omega_1}$. If $K$ is a  compact space then $\ext(C(K),c_0)\neq 0$ provided that $K$ contains a closed subspace $L$ of weight $\omega_1$, and $|P(K)|=\con$.
\end{corollary}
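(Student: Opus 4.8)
The plan is to combine Corollary~\ref{when:4} with the standard observation that for $C(K)$ spaces one has $|C(K)^\ast| = |M(K)|$, together with the identity $|M_1(K)| = |P(K)|$ recalled just before the statement. First I would set $Y = C(K)$. The hypothesis that $K$ contains a closed subspace $L$ of weight $\omega_1$ does \emph{not} by itself give $\dens(Y)=\omega_1$ — indeed $K$ may be huge — so one cannot apply Corollary~\ref{when:4} directly to $C(K)$. Instead, the natural move is to pass to the retract: if $L$ were a retract of $K$, then $\ext(C(L),c_0)\neq 0$ would follow from Corollary~\ref{when:4} applied to $C(L)$ (whose density is $w(L)=\omega_1$), provided $|C(L)^\ast| < 2^{\omega_1}$, and then Lemma~\ref{prel:0} would transfer this to $C(K)$. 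However a closed subspace of weight $\omega_1$ need not be a retract, so I expect the argument must instead work with $(B_{C(K)^\ast},weak^\ast)$ directly and the topological criterion of Theorem~\ref{main}.

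The cleaner route, and the one I would carry out, is: suppose toward a contradiction that $\ext(C(K),c_0)=0$. The restriction map $C(K)\to C(L)$ is a quotient operator (by Tietze), so its conjugate embeds $M(L)$ weak$^\ast$-homeomorphically onto a weak$^\ast$-closed subspace of $M(K)$; in particular $(B_{M(L)},weak^\ast)$ (up to rescaling, $M_r(L)$ for a suitable $r$, but by Theorem~\ref{main2} we may as well take the genuine ball) sits as a weak$^\ast$-compact subset of $C(K)^\ast$. Since $w(L)=\omega_1$, the compactum $B_{M(L)}$ has weight $\omega_1$, so by Lemma~\ref{when:3} it carries at least $2^{\omega_1}$ continuous images from $\omega^\ast$; composing with the embedding gives $|C(\omega^\ast, B_{C(K)^\ast})|\ge 2^{\omega_1}$. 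But by Corollary~\ref{when:2}(\ref{2}), $\ext(C(K),c_0)=0$ forces $|C(\omega^\ast,B_{C(K)^\ast})|\le |C(K)^\ast| = |M(K)| = |M_1(K)| = |P(K)| = \con < 2^{\omega_1}$, a contradiction. Hence $\ext(C(K),c_0)\neq 0$.

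The one technical point to be careful about: Lemma~\ref{when:3} is stated for a compact space $K$ of weight $\omega_1$ and produces maps \emph{into} that space; here I would apply it with $B_{M(L)}$ (or $M_r(L)$) in place of $K$, which is legitimate since that space is weak$^\ast$-compact of weight $\omega_1$ — I should note that $w(M_1(L)) = w(L) = \omega_1$, which holds because $L$ embeds in $M_1(L)$ via Dirac measures and $M_1(L)$ embeds in $[-1,1]^{C(L)}$ with $C(L)$ of density $\omega_1$. Also I should double-check the cardinal arithmetic chain $|M(K)| = |M_1(K)|$: this is immediate since $M(K) = \bigcup_n M_n(K)$ is a countable union, and $|M_1(K)| = |P(K)|$ is exactly the remark stated before the corollary (a probability measure differs from an arbitrary element of the ball by a bounded bookkeeping of its two mutually singular parts and total mass, giving a surjection in each direction up to factors of $\omega$). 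With $|P(K)|=\con$ by hypothesis and $\con < 2^{\omega_1}$ by assumption, the contradiction closes.

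I do not anticipate a genuine obstacle here — the statement is a direct packaging of Lemma~\ref{when:3} and Corollary~\ref{when:2}(\ref{2}) once one realizes that a closed subspace of weight $\omega_1$ yields a weak$^\ast$-compact set of weight $\omega_1$ in the dual of $C(K)$ via the conjugate of the (quotient) restriction operator. The only place requiring a little care is making the weight computation $w(M_r(L))=\omega_1$ explicit and invoking Theorem~\ref{main2} (rather than Theorem~\ref{main}) so that one may pass from an arbitrary rescaled ball back to the statement of Lemma~\ref{when:3}, which is phrased for a fixed compactum and is insensitive to the scaling anyway.
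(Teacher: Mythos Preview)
Your argument is correct and follows the same route as the paper: exhibit $2^{\omega_1}$ many continuous maps $\omega^\ast\to M_1(K)$ via Lemma~\ref{when:3} and contradict Corollary~\ref{when:2}(\ref{2}) using $|M_1(K)|=|P(K)|=\con$. The paper streamlines one step by applying Lemma~\ref{when:3} directly to $L$ (sitting inside $M_1(K)$ via Dirac measures) rather than to $M_1(L)$, which spares you the weight computation $w(M_1(L))=\omega_1$ and the unnecessary hedging about rescaling --- the adjoint of the restriction map is an isometry, so $M_1(L)$ lands in $M_1(K)$ on the nose.
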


\begin{proof}
	By Lemma \ref{when:3},  for $L\sub K$ with $w(L)=\omega_1$ we have
	\[\left|C(\omega^\ast, M_1(K))\right|\ge \left|C(\omega^\ast, M_1(L))\right|\ge  \left|C(\omega^\ast, L)\right|\ge 2^{\omega_1}.\]
	On the other hand, $|M_1(K)|=\con<2^{\omega_1}$,
	so the assertion follows from Corollary \ref{when:2}{(\ref{2})}.
\end{proof}

There are several classes of compacta $K$ for which $|P(K)|=|K|^\omega$, including Rosenthal compacta, compact lines, scattered and more generally fragmentable compacta, etc.;
however, there are consistent examples of spaces, even of Corson compacta, with $|K|=\con$ and $|P(K)|=2^\con$,  see \cite{DP19} for details.

The following was already noted in \cite[Theorem 2.8(b)]{MP18}; we adapt the previous argument to our present setting.

\begin{theorem}\label{some:3}
	If $K$ is a compact space of weight $\omega_1$ and $K$ does not carry a strictly positive measure then $\ext(C(K),c_0)\neq 0$.
\end{theorem}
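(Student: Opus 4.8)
The plan is to use Theorem~\ref{main2} (or equivalently Theorem~\ref{main}), so it suffices to exhibit a countable discrete extension of $(B_{C(K)^\ast},weak^\ast)$ that cannot be realized inside any ball $r\cdot B_{C(K)^\ast}=M_r(K)$. Since $M(K)=C(K)^\ast$, this is a statement about measures: I want to find a sequence of measures $\mu_n$ and a compact set of measures $\K$ so that $\K\cup\{\mu_n\}$ is a countable discrete extension in the weak$^\ast$ topology, but no bounded sequence $\nu_n$ together with $\K$ can reproduce the same pattern of convergence. The hypothesis that enters is: $K$ has weight $\omega_1$ and carries no strictly positive measure.

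First I would recall what ``no strictly positive measure'' buys us. Since $w(K)=\omega_1$, write $K$ as the inverse limit of a continuous well-ordered system $\langle K_\alpha:\alpha<\omega_1\rangle$ of metrizable compacta with surjective bonding maps $\pi^\alpha_\beta$ and projections $\pi_\alpha:K\to K_\alpha$. Because $K$ supports no strictly positive measure, for every Radon probability measure $\mu$ on $K$ there is a nonempty open $U\subseteq K$ with $\mu(U)=0$; pushing forward, for each $\mu$ there is some $\alpha$ and a nonempty open $V\subseteq K_\alpha$ with $(\pi_\alpha)_\#\mu(V)=0$, i.e.\ $\mu(\pi_\alpha^{-1}V)=0$. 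The key point (this is essentially the argument in \cite[Theorem 2.8(b)]{MP18}) is that one can use this, together with a counting/closing-off argument running through $\omega_1$, to build inside $M_1(K)$ a transfinite sequence of points and open sets exhibiting enough ``room'' that no single separable weak$^\ast$-compact piece can absorb them — concretely, one produces a copy of some non-Fréchet–Urysohn object, or more directly a countable discrete extension with the realizability obstruction. I would follow \cite{MP18}: construct continuous surjections $\varphi_\sigma:\omega^\ast\to M_1(K)$ indexed by $\sigma\in 2^{<\omega_1}$ respecting the inverse system, branching at each successor step using the non-injectivity of $\pi^{\alpha+1}_\alpha$ (guaranteed after passing to a subsystem) exactly as in the proof of Lemma~\ref{when:3}, but carrying an extra ``emptiness'' coordinate coming from the no-strictly-positive-measure hypothesis so that the $2^{\omega_1}$ resulting maps $\omega^\ast\to M_1(K)$ give a countable discrete extension of $(B_{C(K)^\ast},weak^\ast)$ which cannot be realized in any fixed ball; then condition $(iii)$ of Theorem~\ref{main2} fails and we conclude $\ext(C(K),c_0)\neq 0$.

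Alternatively — and this is probably the cleanest route — I would reduce to the already-proved Lemma~\ref{when:3} plus a cardinality or separability obstruction in the spirit of Corollary~\ref{when:1}(c): if $\ext(C(K),c_0)=0$, then by Corollary~\ref{when:1}(c) every weak$^\ast$ compact subset of $M(K)$ of weight $\omega_1$ lies in a weak$^\ast$-separable bounded set; but $P(K)\subseteq M_1(K)$ contains a weak$^\ast$-compact subset of weight $\omega_1$ (namely a suitable set of measures coming from $w(K)=\omega_1$), and weak$^\ast$-separability of a bounded set of measures forces the existence of a countable weak$^\ast$-dense subset, whose barycenter-type average is a strictly positive measure on $K$ — contradicting the hypothesis. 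So the structure is: no strictly positive measure $\Rightarrow$ no bounded weak$^\ast$-separable set containing all of $P(K)$-relevant part $\Rightarrow$ monolithicity/separability conclusions of Corollary~\ref{when:1} fail $\Rightarrow$ $\ext(C(K),c_0)\neq 0$.

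The main obstacle I anticipate is the passage from ``no strictly positive measure on $K$'' to a genuine failure of the \emph{bounded} realizability condition: one must be careful that the obstruction is not merely at the level of a single unbounded map but genuinely prevents realization inside every ball $M_r(K)$, which is where Theorem~\ref{main2} (rather than a naive use of Theorem~\ref{main}) is needed, and where the ``$\sqrt n$-rescaling'' trick from the proof of $(i)\to(iii)$ of Theorem~\ref{main2} may have to be imported. Getting the weak$^\ast$-topology bookkeeping right — that the constructed family really forms a countable discrete extension rather than an arbitrary subset, and that the branching at limit stages of the $\omega_1$-system survives — is the delicate part; I expect to lean heavily on the inverse-system machinery already set up for Lemma~\ref{when:3} and on the measure-theoretic input isolated in \cite{MP18}.
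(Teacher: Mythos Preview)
Your second (``alternative'') route is essentially correct and is the paper's argument in lightly repackaged form. The paper does it directly: since $w(K)=\omega_1$, Parovi\v{c}enko's theorem gives a continuous surjection $f:\omega^\ast\to K\hookrightarrow M_1(K)$; assuming $\ext(C(K),c_0)=0$, condition~(iv) of Theorem~\ref{main} extends $f$ to a continuous $\widehat f:\beta\omega\to M(K)$; setting $\mu_n=\widehat f(n)$, the sequence $(\mu_n)$ separates $C(K)$ (for nonzero $g\in C(K)$ we have $g\circ f\neq 0$ on $\omega^\ast$, hence $\mu_n(g)=\widehat f(n)(g)\neq 0$ for some $n$), and then $\sum_n 2^{-n}|\mu_n|$ is a strictly positive finite measure on $K$ --- contradiction. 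Your version goes through Corollary~\ref{when:1}(c) to obtain a countable weak$^\ast$-dense family in a bounded set containing $\Delta_K$, which plays exactly the same separating role; since that corollary is itself proved via Parovi\v{c}enko plus Theorem~\ref{main}(iv), the two arguments coincide. One correction: the measure you want is not a ``barycenter-type average'' of the $\nu_n$ (a convex combination of signed measures can fail to be strictly positive through cancellation) but the weighted sum of their total variations, $\sum_n 2^{-n}|\nu_n|$.

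Your first approach, by contrast, is not a proof: the branching inverse-system construction with an ``extra emptiness coordinate'' is never actually specified, you never say what concrete obstruction to realization it produces, and gesturing toward \cite{MP18} does not fill the gap. The worries in your final paragraph about Theorem~\ref{main2} versus Theorem~\ref{main} and the $\sqrt n$-rescaling trick are misplaced: no boundedness subtlety arises here, and no inverse-system machinery beyond Parovi\v{c}enko's theorem is needed. Drop the first approach and the last paragraph entirely, and write out the three-line direct argument.
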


\begin{proof}
	Since $w(K)=\omega_1$, there is a continuous surjection $f:\omega^\ast\lra K$. Suppose that $\ext(C(K),c_0)=0$; then by Theorem \ref{main}
	$f$ can be extended to a continuous function $\wh{f}:\beta\omega\lra C(K)^\ast$ (as usual, we treat $K$ as a subset of $C(K)^\ast$). Put $\mu_n=\wh{f}(n)$ for every $n$.
	If $g\in C(K)$ is non zero then $g\circ {f}\neq 0$ so $\mu_n(g)=\wh{f}(n)(g)\neq 0$ for some $n$.
	It follows that the measures $\mu_n$ distinguish elements of $C(K)$ and therefore $\sum_n 2^{-n}|\mu_n|$ is a finite strictly positive measure on $K$.
\end{proof}

\begin{problem}\label{some:4}
	Is it true that $\ext(C(K),c_0)\neq 0$ whenever $K$ does not carry a strictly positive measure (or, if $K$ is not $ccc$)?
\end{problem}

Correa and Tausk \cite{CT16} proved that $\ext(C(K),c_0)\neq 0$  whenever $K$ is a non-$ccc$ Valdivia compact space.
This may be demonstrated  using the fact that  for a such a  space $K$ there is $K_0\sub K$, where $K_0$ is of weight $\omega_1$ and still not $ccc$,  and a retraction $r:K\lra K_0$. Then $C(K_0)$ is complemented in $C(K)$ and $C(K_0)$ admits a nontrivial twisted sum with $c_0$.
We do not know if the same holds for Valdivia compacta not carrying a strictly positive measure. The problem is  that, unlike $ccc$, the property of not supporting a measure does not have an obvious reflection at the cardinal number $\omega_1$, see \cite{MaPl} for more information.

We finish this section with some comments about the case $Y=\ell_\infty = C(\beta\omega)$. It is known that $\ext(\ell_\infty, c_0)\neq 0$, see Cabello S\'{a}nchez and  Castillo \cite{CC04} or  \cite[22.5]{SIBS}. 
As we recalled in \ref{main}$(v)$, $Ext(\ell_\infty,c_0)\neq 0$ is equivalent to the existence of an operator $\ell_\infty \lra \ell_\infty/c_0$ that cannot be lifted.
The latter was explicitly  proved in \cite{CC04}; another argument  emerges form  some ideas of 
Koszmider and  Rodr\'{i}guez-Porras \cite{KR16} who considered lifting properties of operators in a slightly different context. 
Namely, the following holds and can be proved following \cite[4.3]{KR16}.

\begin{proposition}
	There exists an operator $T:\ell_\infty\lra \ell_\infty/c_0$ which is weakly compact and has a nonseparable range.
	Such an operator cannot be lifted to $\ell_\infty$.
\end{proposition}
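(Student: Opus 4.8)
\medskip
\noindent The statement has two ingredients --- the existence of such a $T$, and the fact that no such $T$ can be lifted --- and only the second uses that $T$ is weakly compact with nonseparable range. For the existence I would follow \cite[4.3]{KR16}. Fix an almost disjoint family $\{A_\xi:\xi<\omega_1\}$ of infinite subsets of $\omega$ --- equivalently, by Parovi\v{c}enko's theorem, a continuous surjection $\omega^\ast\lra\bA(\omega_1)$ --- and define $T:\ell_\infty\lra\ell_\infty/c_0$ so that, modulo $c_0$, $Tx$ records the limiting behaviour of $x$ along a carefully chosen system of functionals attached to the sets $A_\xi$. Weak compactness of such a $T$ is then verified through the Pelczy\'nski property $(V)$ of $C(K)$-spaces: an operator on a $C(K)$-space is weakly compact iff it is unconditionally converging, i.e.\ $\|T\chi_{B_k}\|\lra0$ for every sequence of pairwise disjoint clopen sets $B_k$, and this is read off directly from the form of $T$. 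Nonseparability of the range is built into the construction, which produces for $\xi<\omega_1$ elements of $T(\ell_\infty)$ that are pairwise $\tfrac12$-separated.

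Now suppose $\wt T:\ell_\infty\lra\ell_\infty$ satisfies $Q\wt T=T$, where $Q:\ell_\infty\lra\ell_\infty/c_0$ is the quotient map; I want a contradiction. Since $Q\wt T=T$ has nonseparable range and $Q$ is continuous, $\wt T$ has nonseparable range too. On the other hand $\wt T$ takes values in $\ell_\infty=C(\beta\omega)$, and $\beta\omega$ is separable, so every relatively weakly compact $H\sub C(\beta\omega)$ is metrizable in the weak topology: the evaluation map $H\lra\R^\omega$ at the dense set $\omega$ is continuous from the weak topology to the product topology and is injective (continuous functions agreeing on $\omega$ agree on $\beta\omega$), hence a homeomorphism onto a subspace of the metrizable space $\R^\omega$. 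Consequently, if $\wt T$ were weakly compact, then $\overline{\wt T(B_{\ell_\infty})}$ would be weakly metrizable, so $\wt T(\ell_\infty)=\bigcup_n\wt T(nB_{\ell_\infty})$ would be weakly, hence norm, separable --- a contradiction. Thus it remains to prove that \emph{any lifting $\wt T$ of the weakly compact operator $T$ along $Q$ is itself weakly compact.}

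To prove this I would argue on the adjoint side. Write $\ell_\infty^\ast=\ell_1\oplus c_0^\perp$ via $\mu\mapsto(\mu|_\omega,\mu-\mu|_\omega)$, where $c_0^\perp=(\ell_\infty/c_0)^\ast$; then $\wt T^\ast=T^\ast P_\perp+(\wt T^\ast|_{\ell_1})P_{\ell_1}$. Since $T$, and hence (Gantmacher) $T^\ast$, is weakly compact, $\wt T$ is weakly compact iff $\wt T^\ast|_{\ell_1}:\ell_1\lra\ell_\infty^\ast$ is, iff, by Krein's theorem, $\{\nu_n:=\wt T^\ast\delta_n:n\in\omega\}$ is relatively weakly compact in $\ell_\infty^\ast$. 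The relation $Q\wt T=T$, read on the adjoints and evaluated at Dirac masses, says that for every $p\in\omega^\ast$ the net $(\nu_n)$ converges to $T^\ast\delta_p$ in the $weak^\ast$ topology along the ultrafilter $p$; equivalently, $n\mapsto\nu_n$ together with $p\mapsto g(p):=T^\ast\delta_p$ defines a $weak^\ast$-continuous map $\wt g:\beta\omega\lra\ell_\infty^\ast$. As $T^\ast$ is weakly compact, $g(\omega^\ast)$ lies in a weakly compact subset of $\ell_\infty^\ast$, which is an Eberlein compactum --- hence angelic, sequentially compact, and with the weak and $weak^\ast$ topologies coinciding on it. Thus $\wt g(\beta\omega)=\overline{\{\nu_n:n\in\omega\}}^{\,weak^\ast}$ is a $weak^\ast$-compact set whose $weak^\ast$-cluster points all lie in this Eberlein compactum. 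The plan is then to combine the Grothendieck property of $\ell_\infty$ ($weak^\ast$-convergent sequences in $\ell_\infty^\ast$ are weakly convergent) with this structure to show that every sequence in $\wt g(\beta\omega)$ has a weakly convergent subsequence; by the Eberlein--\v{S}mulian theorem $\wt g(\beta\omega)$ is then relatively weakly compact, so in particular $\{\nu_n\}$ is, and $\wt T$ is weakly compact.

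I expect two steps to be the real work. First, the explicit construction in \cite[4.3]{KR16}: the obvious recipes fail --- for instance $Tx=\sum_\xi(\text{limit of }x\text{ along }A_\xi)\,e_\xi$ into the canonical copy of $c_0(\omega_1)\sub\ell_\infty/c_0$ does not even land in $c_0(\omega_1)$, since a single element like $\chi_{\bigcup_{\xi\in\Xi}A_\xi}$ (with $\Xi$ infinite) makes uncountably many coordinates of $Tx$ equal $1$ --- so obtaining weak compactness and a nonseparable range \emph{simultaneously} is delicate, and this is precisely what \cite[4.3]{KR16} arranges. Second, within the non-liftability argument the passage from $weak^\ast$- to weak-relative-compactness of $\{\nu_n\}$ is the crux; a possible alternative to the Grothendieck-property route is Rosenthal's $\ell_\infty$-dichotomy: if $\wt T$ is not weakly compact it is an isomorphism on some $\ell_\infty(M)$ ($M\sub\omega$ infinite), hence $Q$ restricted to the copy $\wt T(\ell_\infty(M))\cong\ell_\infty$ is weakly compact, and combining this with the fact that $\ell_\infty$ has no infinite-dimensional reflexive quotient one reaches a contradiction --- but the perturbation estimates needed to make this precise are somewhat fiddly.
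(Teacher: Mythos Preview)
The paper does not give its own proof of this proposition; it simply says the result ``can be proved following \cite[4.3]{KR16}''. Your proposal for the existence part does exactly the same, so on that point you and the paper agree. For the non-liftability part you go considerably further than the paper, and the overall architecture --- show that any lifting $\wt T$ must be weakly compact, then use that weakly compact subsets of $C(\beta\omega)$ are weak-metrizable (hence norm-separable) to contradict nonseparability of the range --- is sound and the second step is correctly argued.

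The genuine gap is the step you yourself flag: proving that any lifting $\wt T$ is weakly compact. Neither of your two routes closes it. In the Grothendieck-property route you need every subsequence of $(\nu_n)=(\wt T^\ast\delta_n)$ to have a weak$^\ast$-convergent sub-subsequence; the Grothendieck property then upgrades weak$^\ast$- to weak-convergence. But weak$^\ast$-compactness of $\wt g(\beta\omega)$ does not give sequential compactness, and knowing only that the weak$^\ast$-cluster set lies in an Eberlein compactum $W$ does not produce convergent subsequences of points lying \emph{outside} $W$ --- think of the sequence $(n)$ in $\beta\omega$, whose cluster set $\omega^\ast$ could in principle sit inside a sequentially compact space without $(n)$ having any convergent subsequence. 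In the Rosenthal route you arrive at a copy $E\cong\ell_\infty$ inside $\ell_\infty$ with $Q|_E$ weakly compact, and invoke ``$\ell_\infty$ has no infinite-dimensional reflexive quotient''. But $Q|_E$ weakly compact does not make $E/\ker(Q|_E)$ reflexive: the induced injection $E/(E\cap c_0)\hookrightarrow\ell_\infty/c_0$ need not be bounded below, so its image is not a quotient of $E$ in the Banach-space sense, and injective weakly compact operators from $\ell_\infty$ do exist (e.g.\ $x\mapsto(x_n/n)$ into $c_0$). What actually has to be shown is that \emph{no} copy of $\ell_\infty$ inside $\ell_\infty$ can have $Q$ restricted to it weakly compact; this is true but requires an additional argument (of roughly the kind carried out in \cite{KR16}), not merely the reflexive-quotient fact. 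Since you explicitly acknowledge both routes as incomplete, this is less an error than an honest lacuna --- but it is precisely the substance of the proof, and without it the non-liftability claim is not established.
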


The fact that $Ext(\ell_\infty,c_0)\neq 0$ also follows from Corollary \ref{tsad:10} presented 
later in this paper, as $\beta\omega$ can be continuously mapped onto $[0,1]^\con$. 
It is a bit suprising  that, no matter which technique we use, checking that  $\ext(\ell_\infty,c_0)\neq 0$ does require some work.

\section{$C(K)$ spaces under the continuum hypothesis}\label{ch}

We prove here that under the continuum hypothesis  $c_0$ admits a nontrivial twisted sum with every nonseparable space of the form $C(K)$. We will need a sequence of auxiliary results.

 \begin{definition}\label{ch:1}
 Given a compact space $K$, we say that    sets  $A_1,\ldots, A_n\sub K$ are separated in $K$ if there are open sets $U_i$ with
 $A_i\sub U_i\sub K$, for every $i$, and $\bigcap_{i=1}^n U_i=\emptyset$.
 \end{definition}

\begin{proposition}[CH]\label{ch:2}
Let $K$ be a compact space of weight $\con$. Suppose that $U_1,\ldots, U_n$ are open subsets of $K$ such that
whenever $f:K\lra M$ is a continuous function into a metric space $M$ then $f(U_1)\cap\ldots f(U_n)\neq\emptyset$. Then there is a countable discrete extension $L$ of $K$ such that $U_1,\ldots, U_n$ are not separated in $L$.
\end{proposition}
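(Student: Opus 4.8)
The plan is to build the countable discrete extension $L = K \cup \omega$ by specifying, for each new isolated point $m \in \omega$, a neighborhood filter; equivalently, by deciding which traces $S \cap \omega$ (for $S$ a basic clopen in $L$) are allowed, which amounts to choosing, for each point of $K$, the subset of $\omega$ that accumulates to it. Concretely, a countable discrete extension of $K$ is determined by a compactification $\gamma\omega$ of $\omega$ together with a continuous surjection $\gamma\omega\setminus\omega \to K$; by Lemma \ref{prel:2} and Parovi\v{c}enko's theorem (here CH enters, via $w(K)=\con=\omega_1$), $K$ is a continuous image of $\omega^\ast$, so fix a continuous surjection $q:\omega^\ast \to K$ and a compactification $\gamma\omega$ of $\omega$ with remainder $K$ and $q$ extending to $\bar q:\beta\omega \to \gamma\omega$. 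The extension $L$ I want will be obtained by further ``blowing up'' $\gamma\omega$: I will add one more copy of $\omega$ whose closure meets $K$ in a prescribed fashion designed to destroy the separation of $U_1,\dots,U_n$.

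The key point is the hypothesis: for every continuous $f:K\to M$ into a metric space, $f(U_1)\cap\cdots\cap f(U_n)\neq\emptyset$. I would exploit this by writing $K$ as an inverse limit of an $\omega_1$-indexed continuous inverse system $\langle K_\alpha : \alpha<\omega_1\rangle$ of metric compacta with projections $\pi_\alpha: K\to K_\alpha$, so that each $U_i$ is, up to refinement of the index, a pullback $\pi_\alpha^{-1}(V_i^\alpha)$ of open sets in $K_\alpha$. For each $\alpha$ the hypothesis applied to $f=\pi_\alpha$ gives a point $t_\alpha \in \pi_\alpha(U_1)\cap\cdots\cap\pi_\alpha(U_n) \subseteq K_\alpha$; the compatibility of these points under the bonding maps is where care is needed — I would choose the $t_\alpha$ coherently (using compactness to pass to a thread, or re-choosing along a cofinal set of $\alpha$'s) to obtain a point $t=(t_\alpha)_\alpha$ in the inverse limit with $t_\alpha \in \bigcap_i \pi_\alpha(U_i)$ for all $\alpha$. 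Then I can pick, for each $\alpha$ and each $i$, a point $x_{\alpha,i}\in U_i$ with $\pi_\alpha(x_{\alpha,i})=t_\alpha$, i.e.\ the points $x_{\alpha,i}$ for fixed $i$ converge (along the net directed by $\alpha<\omega_1$) to a common ``virtual'' limit that projects to $t_\alpha$ at every level — the obstruction being that there need be no single point of $K$ that works for all $i$ simultaneously, which is exactly the phenomenon forcing us into a discrete extension rather than finding a point in $\bigcap U_i$.

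With this data in hand I would construct $L$ as follows: enumerate $\omega_1 = \{\alpha_m : m<\omega\}$ is impossible, so instead I use a countable cofinal-in-a-suitable-sense selection — more precisely, I will choose an increasing sequence $\alpha_0<\alpha_1<\cdots$ in $\omega_1$ and attach the new isolated point $m$ to the finite set $\{x_{\alpha_m,1},\dots,x_{\alpha_m,n}\}$, declaring a basic neighborhood of a point $x\in K$ in $L$ to contain $m$ iff $x$ lies in a fixed neighborhood determined at level $\alpha_m$. The resulting topology on $L=K\cup\omega$ is compact Hausdorff because the attaching is controlled level-by-level by the metric quotients $K_{\alpha_m}$, and by design every clopen $U_i'\supseteq U_i$ in $L$ contains a tail of $\omega$ (since $U_i'$ must contain all but finitely many $x_{\alpha_m,i}$, hence all but finitely many $m$), so $\bigcap_i U_i' \supseteq$ a cofinite subset of $\omega \neq \emptyset$; thus $U_1,\dots,U_n$ are not separated in $L$. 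The main obstacle I anticipate is the coherence/bookkeeping in the last two paragraphs: arranging the points $t_\alpha$ and $x_{\alpha,i}$ compatibly across the $\omega_1$-system while only using countably many levels to pin down a genuine discrete extension, and verifying Hausdorffness and compactness of $L$ — this is where one must be careful that the closures of $\omega$ and of neighborhoods of $K$-points interact correctly, presumably again invoking Lemma \ref{prel:2} / Theorem \ref{prel:2.5} to realize $L$ as (the remainder-plus-$\omega$ of) an actual compactification of $\omega$ rather than hand-building the topology.
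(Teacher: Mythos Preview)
Your proposal has a genuine gap at the construction of $L$. The idea of ``attaching the isolated point $m$ to the finite set $\{x_{\alpha_m,1},\dots,x_{\alpha_m,n}\}$'' does not define a Hausdorff topology: an isolated point cannot simultaneously lie in every neighborhood of $n$ distinct points of $K$. More seriously, even under a charitable reading, your construction uses only countably many levels $\alpha_m$ of the inverse system, and a countable sequence is never cofinal in $\omega_1$. The open sets $V_i \supseteq U_i$ in $L$ that you must rule out can depend on coordinates above $\sup_m \alpha_m$, and nothing in your construction controls them. The inference ``$U_i'$ contains all $x_{\alpha_m,i}$, hence all but finitely many $m$'' is unjustified: $x_{\alpha_m,i} \in U_i \subseteq U_i'$ is automatic, but there is no mechanism forcing $m \in U_i'$.

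The paper's route is different and avoids the coherence problem you flag (which turns out to be a red herring: no global thread is needed). Rather than searching for a single $t$ and a countable attachment, the paper builds a continuous surjection $f:\omega^\ast \to K$ by a length-$\omega_1$ induction, diagonalizing against an enumeration $\langle c^\alpha : \alpha < \omega_1\rangle$ of \emph{all} $n$-tuples of clopen sets in $\omega^\ast$ with empty intersection (here CH enters: there are only $\con = \omega_1$ such tuples). At each successor step the hypothesis is applied to the metric quotient $\pi_{\theta(\alpha)}$ to pick a \emph{fresh} point $t \in K_{\theta(\alpha)}$ and preimages $x_i \in U_i$; the lifted surjection (obtained via Theorem~\ref{prel:2.5}) is then modified on a small clopen piece so that $f^{-1}(U_k) \not\subseteq c^\alpha_k$ for some $k$. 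The limit map $f$ then satisfies $\bigcap_{i}\overline{f^{-1}(U_i)} \neq \emptyset$. The extension $L = K\cup\omega$ comes from $f$ via Lemma~\ref{prel:2}, and extremal disconnectedness of $\beta\omega$ (where $\bigcap_i \overline{W_i} = \overline{\bigcap_i W_i}$ for open $W_i$) converts this into non-separation of the $U_i$ in $L$. The crucial idea you are missing is that one must anticipate \emph{every} attempted separation, and that bookkeeping requires the full $\omega_1$-length recursion, not a countable one.
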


\begin{proof}
The assertion is a consequence of  the following claim.
\medskip

\noindent {\sc Claim.} There is a continuous surjection $f:\omega^\ast\lra K$ such that
$\bigcap_{i=1}^n \overline{f^{-1}(U_i)}\neq\emptyset$.
\medskip

Indeed, having such a function $f$ we may use Lemma \ref{prel:2} to define a countable discrete extension $K\cup\omega$ of $K$ such that
$f$ extends to a continuous surjection $g:\beta\omega\lra K\cup\omega$. Then $K\cup\omega$ is the required space: If $V_i$ are open in $K\cup\omega$
and $U_i\sub V_i$, for every $i$, then
\[\bigcap_{i=1}^n\overline{g^{-1}(V_i)}\supset \bigcap_{i=1}^n\overline{f^{-1}(U_i)}\neq\emptyset,\]
so $\bigcap_{i=1}^n {g^{-1}(V_i)}\neq\emptyset$ as well, since $\beta\omega$ is extremally disconnected (see \cite[6.2.29]{En}), and in extremally disconnected spaces $\bigcap_{i=1}^n \overline{W_i} = \overline{\bigcap_{i=1}^n W_i}$, for open $W_i$, see \cite{Te}.
Consequently, $\bigcap_{i=1}^n {V_i}\neq\emptyset$, which shows that $U_i$ are not separated in $K\cup\omega$.
\medskip

To prove {\sc Claim} we may suppose that $K\sub [0,1]^{\omega_1}$. For $\alpha<\omega_1$ write $K_\alpha=\pi_\alpha(K)$, where
$\pi_\alpha$ is the projection onto $[0,1]^\alpha$. We also consider the projections $\pi_\alpha^\beta:[0,1]^\beta\lra [0,1]^\alpha$ for $\alpha<\beta$. We shall define an increasing function $\theta:\omega_1\lra\omega_1$ and a sequence
of coherent continuous surjections $f_\alpha:\omega^\ast\lra K_{\theta(\alpha)}$ so that, for every $\alpha$, the left part of the following diagram commutes
 $$\xymatrix{
K_{\theta(\alpha)} &&K_{\theta(\alpha+1)}\ar[ll]_{\pi_{\theta(\alpha)}^{\theta(\alpha+1)}}&&K\ar[ll]_{\pi_{\theta(\alpha+1)}}\\
&& \omega^\ast\ar^{f_{\alpha}}[ull]  \ar_{f_{\alpha+1}}[u]\ar@{.>}[urr]_{f}&&
}$$
and $f:\omega^\ast\lra K$ will be the unique mapping satisfying $\pi_{\theta(\alpha)}\circ f=f_\alpha$.

Let $\langle c^\alpha=(c^\alpha_1,\ldots, c^\alpha_n): \alpha<\omega_1\rangle$ be an enumerations of all $n$-tuples of clopen sets in $\omega^\ast$ having empty intersection.

If the construction is done below a limit cardinal $\alpha$ then we define
$\theta(\alpha)=\sup_{\beta<\alpha} \theta(\beta)$ and
then $f_\alpha$ is uniquely determined.

Suppose that $f_\alpha:\omega^\ast\lra K_{\theta(\alpha)}$ is given. Using the fact that $K_{\theta(\alpha)}$ is metrizable we describe the next step dealing with
the $n$-tuple $c^\alpha$.

By our assumption on $U_i$'s applied to $\pi_{\theta(\alpha)}$ there is $t\in K_{\theta(\alpha)}$ and $x_i\in U_i$
such that $t=\pi_{\theta(\alpha)}(x_i)$ for $i=1,\ldots, n$. Define $\theta(\alpha+1)>\theta(\alpha)$ so that
for every $i$ there is a basic open set $V_i$ determined by coordinates in $\theta(\alpha+1)$ and such that $x_i\in V_i\sub U_i$.
The set $G=\{p\in \omega^\ast: f_\alpha(p)=t\}$ is a nonempty closed $G_\delta$ subset of $\omega^\ast$, so it has nonempty interior. Hence, we may take a nonempty clopen set $c\sub G$.
We have $\bigcap_{i=1}^n c^\alpha_i=\emptyset$ and therefore there is $k\le n$ such that $a=c\sm c_k^\alpha\neq\emptyset$.

By Theorem \ref{prel:2.5} there is a continuous surjection $h:\omega^\ast\lra K_{\theta(\alpha+1)}$ such that
$\pi_{\theta(\alpha)}^{\theta(\alpha+1)}\circ h=f_\alpha$. We define $f_{\alpha+1}$ by the formula
\[f_{\alpha+1}(p)=
\begin{cases}
   \pi_{\theta(\alpha+1)}(x_k) , & \text{if } p\in a, \\
    h(p) & \text{if } p\in\omega^\ast\sm a.
  \end{cases}
\]
Clearly, $\pi_{\theta(\alpha)}^{\theta(\alpha+1)}\circ f_{\alpha+1}=f_\alpha$ and the successor step is done.

The key point is that if we consider the resulting map $f:\omega^\ast\lra K$ then, looking back on the above construction,
for any $p\in a$, the points $f(p)$ and $\pi_{\theta(\alpha+1)}(x_k)$  have the same coordinates below $\theta(\alpha+1)$ so,
by the way $\theta(\alpha+1)$ is defined, we have $f(p)\in U_k$ and therefore  $f^{-1}(U_k)\not\subset c^\alpha_k$.

The inductive construction guarantees that for any clopen sets $c_1,\ldots, c_n$, if $\bigcap_{i=1}^n c_i=\emptyset$ then $f^{-1}(U_k)\not\subset c_k$ for some $k\le n$.
This means that  $\bigcap_{i=1}^n \overline{f^{-1}(U_i)}\neq\emptyset$ (cf.\ \cite[1.5.18]{En}), and {\sc Claim}  has been proved.
\end{proof}

\begin{lemma}\label{ch:3}
Let $G$ be a subset of a compact space $K$ such that for every continuous mapping $g:K\lra Z$ into a metric space $Z$, either
$g$ is not injective on $G$ or \[g(G)\cap g(K\sm G)\neq\emptyset.\]

Then for every continuous mapping $f:M_1(K)\lra Z$ into a metric space  $Z$ and $\eps\in (0,1/2)$
we have $f(0)\in\{f(\mu)\in M_1(K): \mu^+(G)> \eps\}$.
\end{lemma}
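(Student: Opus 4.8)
The plan is to strip off the continuous map $f$ and reduce the statement to a purely measure‑theoretic assertion about $weak^\ast$ neighbourhoods of the zero measure. Since $f$ is continuous and $Z$ is metric, $f(0)$ will belong to the closure of $A_\eps:=\{f(\mu):\mu\in M_1(K),\ \mu^+(G)>\eps\}$ as soon as we know that \emph{every $weak^\ast$-open neighbourhood $O$ of $0$ in $M_1(K)$ contains some measure $\mu$ with $\mu^+(G)>\eps$}: indeed, for an arbitrary open $W\ni f(0)$ in $Z$ the preimage $f^{-1}(W)$ is such a neighbourhood, so the corresponding $\mu$ yields $f(\mu)\in W\cap A_\eps$, and as $W$ was arbitrary we get $f(0)\in\overline{A_\eps}$. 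Thus it suffices to prove the reduced statement.

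\textbf{The reduced statement.} We may assume $O=\{\mu\in M_1(K):|\mu(h_i)|<\delta\ \text{for}\ i=1,\dots,m\}$ for suitable $h_1,\dots,h_m\in C(K)$ and $\delta>0$, since sets of this form constitute a neighbourhood base at $0$ for the $weak^\ast$ topology. Consider the continuous map $g=(h_1,\dots,h_m):K\lra\er^m$ into a metric space. The hypothesis on $G$ gives one of two situations: either $g$ is not injective on $G$, so there are distinct $x,x'\in G$ with $g(x)=g(x')$; or $g(G)\cap g(K\sm G)\neq\emptyset$, so there are $x\in G$ and $x'\in K\sm G$ with $g(x)=g(x')$. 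In both cases $x\neq x'$ and $x\in G$. Put $\mu=\tfrac12(\delta_x-\delta_{x'})$. Then $\|\mu\|=1$, so $\mu\in M_1(K)$; since $x\neq x'$, the masses $\tfrac12\delta_x$ and $\tfrac12\delta_{x'}$ are mutually singular, hence $\mu^+=\tfrac12\delta_x$, and as $x\in G$ this gives $\mu^+(G)=\tfrac12>\eps$ (here we use $\eps<1/2$). Finally $\mu(h_i)=\tfrac12\bigl(h_i(x)-h_i(x')\bigr)=0$ for every $i$ because $g(x)=g(x')$, so $\mu\in O$. This proves the reduced statement, and with it the lemma.

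\textbf{Main point.} There is no genuine obstacle here; the only idea that matters is the choice of the witness $\mu=\tfrac12(\delta_x-\delta_{x'})$. Whenever a continuous metric‑valued map glues two distinct points of $K$, this $\mu$ is a norm‑one signed measure that lies in the prescribed finite‑codimension $weak^\ast$ neighbourhood of $0$ (because $g$ kills $\mu$) while its positive part already carries mass $1/2$; and the hypothesis on $G$ is precisely what guarantees that one of the two glued points can be chosen inside $G$, so that the mass of $\mu^+$ on $G$ exceeds any $\eps<1/2$.
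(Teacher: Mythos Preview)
Your argument establishes only that $f(0)$ lies in the \emph{closure} of $A_\eps=\{f(\mu):\mu\in M_1(K),\ \mu^+(G)>\eps\}$, whereas the lemma asserts membership in $A_\eps$ itself. This is not a cosmetic distinction: in the application (Proposition~\ref{ch:5}) one needs $f(0)\in f(V_i)$ for each $i$, so that the open sets $V_i$ satisfy the hypothesis of Proposition~\ref{ch:2}, namely $\bigcap_i f(V_i)\neq\emptyset$; having only $f(0)\in\bigcap_i\overline{f(V_i)}$ does not give this.

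The gap comes from your reduction to a \emph{finite} basic $weak^\ast$ neighbourhood. Working with finitely many $h_1,\dots,h_m$ you only force $\mu(h_i)=0$, which puts $\mu$ inside the neighbourhood but does not make $f(\mu)=f(0)$. The paper's proof avoids this by exploiting that $Z$ is metric in a sharper way: since $M_1(K)$ embeds in $[-1,1]^{B_{C(K)}}$ and $f$ is continuous into a metric space, $f$ depends on \emph{countably} many coordinates, i.e.\ there is a sequence $(g_n)$ in $B_{C(K)}$ such that $\mu(g_n)=\nu(g_n)$ for all $n$ implies $f(\mu)=f(\nu)$. One then applies the hypothesis on $G$ to the diagonal map $g=(g_n):K\to[-1,1]^\omega$ (still a metric target), obtains $x\in G$, $y\neq x$ with $g(x)=g(y)$, and the same witness $\mu=\tfrac12(\delta_x-\delta_y)$ now gives $f(\mu)=f(0)$ exactly. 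Your construction of $\mu$ is the right one; what is missing is passing from finitely many test functions to the countable family that actually determines $f$.
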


\begin{proof}
Consider the standard embedding $M_1(K)\hookrightarrow [-1,1]^{B_{C(K)}}$.
Then a continuous mapping $f:M_1(K)\lra Z$ depends on countably many coordinates and this explains the following.
\medskip

\noindent {\sc Claim.} There is a sequence of $g_n\in B_{C(K)}$ such that for every $\mu,\nu\in M_1(K)$,
if $\mu(g_n)=\nu(g_n)$ for every $n$ then $f(\mu)=f(\nu)$.
\medskip

We apply the assumption to the diagonal map $g:K\lra [-1,1]^\omega$, $g(x)=(g_n(x))_n$. Then $g$ is either not one-to-one on $G$ or
$g(G)\cap g(K\sm G)\neq\emptyset$; in either case there are $x\in G$ and $y\in K$, $x\neq y$ such that $g(x)=g(y)$.
Take $\mu=1/2(\delta_x-\delta_y)\in M_1(K)$. Then $g_n(\mu)=0$ for every $n$ so $f(0)=f(\mu)$.
As $\mu^+(G)=1/2$, this finishes the proof.
\end{proof}

\begin{lemma}\label{ch:4}
For an open subset $V$ of a compact space $L$ the following are equivalent

\begin{enumerate}[(i)]
\item there is a continuous mapping $g:L\lra  Z$ into a metric space $Z$ such that $g$ is injective on $V$ and $g(V)\cap g(L\sm V)=\emptyset$;
\item $V$ is homeomorphic to an open subset of a compact metric space;
\item $V$ is metrizable $F_\sigma$ subset of $L$.
\end{enumerate}
\end{lemma}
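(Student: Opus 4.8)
The plan is to prove the cycle of implications $(i)\to(ii)\to(iii)\to(i)$, which is the shortest route since each arrow is a short topological argument. For $(i)\to(ii)$: given $g\colon L\to Z$ with the stated properties, set $Z_0=\overline{g(L)}$, which is a compact metric space (a closed subset of the metric space $Z$, and $L$ being compact $g(L)$ is already compact). The set $g(V)$ is open in $g(L)$ because $g(V)\cap g(L\setminus V)=\emptyset$ forces $g(V)=g(L)\setminus g(L\setminus V)$, and $g(L\setminus V)$ is compact hence closed. Moreover $g(V)$ is open in $Z_0$: indeed $g(V)=g(L)\cap W$ for some open $W\subseteq Z_0$ by the previous sentence, but actually I should be slightly careful — $g(L)$ need not be open in $Z_0$. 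The cleaner statement is that $g(V)$ is an open subset of the \emph{locally compact} space $g(L)$; but $(ii)$ only asks for an open subset of \emph{some} compact metric space, and $g(L)$ itself is compact metric, so it suffices that $g(V)$ is open in $g(L)$, which we have. Since $g$ restricted to $V$ is a continuous injection of an open (hence locally compact, $\sigma$-compact) subset of $L$ onto $g(V)$, and continuous bijections from locally compact $\sigma$-compact spaces to Hausdorff spaces are homeomorphisms (or: $V$ is a countable union of compact sets on each of which $g$ is a homeomorphism onto its image, and these images are open in $g(V)$), we conclude $g|_V\colon V\to g(V)$ is a homeomorphism, giving $(ii)$.

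For $(ii)\to(iii)$: suppose $h\colon V\to W$ is a homeomorphism onto an open subset $W$ of a compact metric space $N$. Then $W$ is metrizable and $W$ is $F_\sigma$ in $N$ (every open subset of a metric space is $F_\sigma$), so $W=\bigcup_n F_n$ with $F_n$ closed in $N$, hence compact; then $h^{-1}(F_n)$ is compact in $V$, hence compact in $L$, hence closed in $L$, so $V=\bigcup_n h^{-1}(F_n)$ is $F_\sigma$ in $L$. And $V$ is metrizable, being homeomorphic to $W$. For $(iii)\to(i)$: if $V=\bigcup_n F_n$ is metrizable and $F_\sigma$ in $L$ with each $F_n$ closed in $L$, then since $V$ is a separable metrizable locally compact space it embeds in a compact metrizable space; more directly, by normality of $L$ choose for each $n$ a continuous $\varphi_n\colon L\to[0,1]$ with $\varphi_n\equiv 1$ on $F_n$ and $\varphi_n\equiv 0$ on $L\setminus V$ — wait, that last requires $L\setminus V$ closed, i.e.\ $V$ open, which is given. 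Then consider $g=(\varphi_n)_n$ together with a countable family $(\psi_m)$ of continuous real functions on $L$ that separates points of $V$ and is obtained from a countable base of the metrizable $V$ via the $F_\sigma$ structure (extend by Tietze from each compact $F_n$). The map $g\colon L\to[0,1]^\omega\times\er^\omega$ built from the $\varphi_n$ and the $\psi_m$ is injective on $V$ by the $\psi_m$, and any point of $\overline{V}\setminus V\subseteq L\setminus V$ has all $\varphi_n$ coordinates $0$; but every point of $V$ lies in some $F_n$ and so has $\varphi_n$-coordinate $1$. Hence $g(V)\cap g(L\setminus V)=\emptyset$, giving $(i)$.

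The step I expect to require the most care is $(iii)\to(i)$, specifically the construction of the separating family $(\psi_m)$: one must produce \emph{finitely or countably many} continuous functions on all of $L$ that are injective on $V$, and the natural way is to fix a countable base $\{B_k\}$ of $V$, note $\overline{B_k}^{V}$ need not be compact, so instead work with the compact exhaustion: write $V=\bigcup_n F_n$ with $F_n\subseteq \mathrm{int}_L(F_{n+1})$ if possible, or at least $F_n$ compact, take a countable base and on each compact piece extend coordinate functions of a metric embedding $F_n\hookrightarrow[0,1]^\omega$ via Tietze. A mild subtlety is making sure the combined countable family still separates points of $V$ \emph{and} vanishes appropriately on the boundary — handled by throwing in the $\varphi_n$'s as above. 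Everything else ($(i)\to(ii)$ and $(ii)\to(iii)$) is routine point-set topology about open $F_\sigma$ subsets of metric spaces and about continuous bijections off $\sigma$-compact locally compact spaces.
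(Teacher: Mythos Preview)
Your cycle $(i)\to(ii)\to(iii)\to(i)$ is the paper's route, and your arguments for $(ii)\to(iii)$ and $(iii)\to(i)$ are essentially the paper's. (The paper streamlines $(iii)\to(i)$ by using a single continuous $f:L\to[0,1]$ with $f^{-1}((0,1])=V$ in place of your family $(\varphi_n)$, and then takes the diagonal of $f$ with the Tietze extensions of embeddings $F_n\hookrightarrow[0,1]^\omega$; this is cosmetic.)

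In $(i)\to(ii)$, however, there is a slip. Your parenthetical ``open (hence locally compact, $\sigma$-compact)'' is not valid: open subsets of compact Hausdorff spaces need not be $\sigma$-compact (e.g.\ $\beta\omega\setminus\{p\}$ for $p\in\omega^\ast$). More seriously, the assertion ``continuous bijections from locally compact $\sigma$-compact spaces to Hausdorff spaces are homeomorphisms'' is false --- $[0,2\pi)\to S^1$, $t\mapsto e^{it}$, is the standard counterexample --- and your alternative (``these images are open in $g(V)$'') is not justified either. The paper's remedy is short and bypasses all of this: replace $Z$ by $g(L)$ so that $g$ is surjective, and observe that for every open $U\subseteq V$ one has $g(U)=Z\setminus g(L\setminus U)$ (using injectivity of $g$ on $V$ together with $g(V)\cap g(L\setminus V)=\emptyset$). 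Since $L\setminus U$ is compact, $g(L\setminus U)$ is closed, so $g(U)$ is open; thus $g|_V$ is an open map and hence a homeomorphism onto the open set $g(V)\subseteq g(L)$.
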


\begin{proof}
$(i)\to (ii)$.  We can assume that $g:L\lra Z$ is surjective. Then, by the assumptions on $g$, for every open $U\sub V$ the set
$g(U)=Z\sm g(K\sm U)$ is open. Hence, $g|V: V\lra g(V)$ is a homeomorphism.

The implication $(ii)\to (iii)$ is obvious so it remains to prove $(iii)\to (i)$. We may represent $V$ as an increasing union $\bigcup_{n}F_n$ of metrizable compacta $F_n$. For each $n$, take a embedding $h_n: F_n\to [0,1]^\omega$ and its continuous extension $g_n: L\lra [0,1]^\omega$. Since $V$ is an open $F_\sigma$ set in a normal space $L$, we can find a continuous  function $f: L\lra [0,1]$ such that $f^{-1}((0,1]) = V$. One can easily verify that the diagonal of $f$ and all functions $g_n$ has the required properties.
\end{proof}

\begin{proposition}[CH]\label{ch:5}
Let $K$ be a compact space such that $\ext(C(K),c_0)=0$. Then there is a natural number $n$ such that
whenever $L$ is a closed subspace of $K$ of weight $\con$ and $U_1,\ldots, U_n\sub K$ are pairwise disjoint open sets then
$L\cap U_i$ is metrizable and $F_\sigma$ for some $i\le n$.
\end{proposition}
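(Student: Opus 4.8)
The plan is to argue by contradiction. First I would use $\ext(C(K),c_0)=0$ together with the implication $(i)\to(iii)$ of Theorem \ref{main2}, applied to $Y=C(K)$ (so that $B_{Y^\ast}=M_1(K)$ and $r\cdot B_{Y^\ast}=M_r(K)$), to fix a constant $r>0$ such that every $weak^\ast$-continuous map $\omega^\ast\lra M_1(K)$ extends to a $weak^\ast$-continuous map $\beta\omega\lra M_r(K)$. I would then declare $n=\lfloor 2r\rfloor+1$, so that $n>2r$, and set $\eps=r/n$, recording the two numerical facts $0<\eps<1/2$ and $n\eps=r$; these are exactly what is needed at the two ends of the argument.

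Now suppose $L\sub K$ is closed with $w(L)=\con$ and $U_1,\ldots,U_n\sub K$ are pairwise disjoint open sets, and assume, towards a contradiction, that none of the sets $V_i:=L\cap U_i$ is a metrizable $F_\sigma$ subset of $L$. Applying Lemma \ref{ch:4} to the open set $V_i$ of the compact space $L$, the failure of $(iii)$ gives, via its equivalence with $(i)$, that every continuous map $g:L\lra Z$ into a metric space is either not injective on $V_i$ or satisfies $g(V_i)\cap g(L\sm V_i)\neq\emptyset$ --- precisely the hypothesis of Lemma \ref{ch:3} with $G=V_i$. Hence, for every continuous $f:M_1(L)\lra Z$ into a metric space, $f(0)$ lies in $f(W_i)$, where $W_i=\{\mu\in M_1(L):\mu^+(V_i)>\eps\}$ (a $weak^\ast$-open subset of $M_1(L)$ by Lemma \ref{prel:-1}); in particular $f(0)\in\bigcap_{i=1}^n f(W_i)$, so $f(W_1)\cap\cdots\cap f(W_n)\neq\emptyset$. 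Since $w(L)=\con$, the compact space $M_1(L)$ also has weight $\con$, so Proposition \ref{ch:2} applies to $M_1(L)$ and $W_1,\ldots,W_n$, producing a countable discrete extension $L'$ of $M_1(L)$ in which $W_1,\ldots,W_n$ are not separated. Inspecting its proof (which invokes Lemma \ref{prel:2}) I would record the extra information that $L'$ may be taken to be a compactification $\gamma\omega$ of $\omega$ with remainder $M_1(L)$, so that there is a continuous surjection $p:\beta\omega\lra L'$ restricting to the identity on $\omega$ and mapping $\omega^\ast$ onto $M_1(L)$.

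It remains to transport this extension back into $M_r(K)$ and produce the contradiction. I would identify $M_1(L)$ with the $weak^\ast$-closed set of measures in $M_1(K)$ concentrated on $L$ (the image of the adjoint of the restriction $C(K)\lra C(L)$), noting that $\mu^+(U_i)=\mu^+(U_i\cap L)=\mu^+(V_i)$ for such $\mu$. Composing $p|_{\omega^\ast}$ with the inclusion $M_1(L)\hookrightarrow M_1(K)$ yields a $weak^\ast$-continuous $\varphi:\omega^\ast\lra M_1(K)$, which by the choice of $r$ extends to a $weak^\ast$-continuous $\widehat\varphi:\beta\omega\lra M_r(K)$; writing $\nu_k=\widehat\varphi(k)$, the map $e:L'\lra M_r(K)$ defined by $e=\mathrm{id}$ on the remainder $M_1(L)$ and $e(k)=\nu_k$ satisfies $e\circ p=\widehat\varphi$ and is therefore $weak^\ast$-continuous, since $p$ is a quotient map. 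Finally, setting $\widetilde W_i=\{\nu\in M_r(K):\nu^+(U_i)>\eps\}$ (again $weak^\ast$-open by Lemma \ref{prel:-1}), one has $W_i\sub\widetilde W_i$, while the pairwise disjointness of the $U_i$ forces $\bigcap_{i=1}^n\widetilde W_i=\emptyset$: a common point $\nu$ would give $r=n\eps<\sum_{i=1}^n\nu^+(U_i)\le\nu^+(K)\le\|\nu\|\le r$. Then the $weak^\ast$-open sets $e^{-1}(\widetilde W_i)\supseteq W_i$ of $L'$ have empty common intersection, i.e.\ $W_1,\ldots,W_n$ are separated in $L'$ --- contradicting the choice of $L'$. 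This contradiction establishes the proposition, with $n=\lfloor 2r\rfloor+1$.

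The step I expect to be the real obstacle is precisely this transport: Proposition \ref{ch:2} cannot be applied to $M_1(K)$ itself, whose weight is $w(K)$ and may exceed $\con$, so the whole construction must be carried out inside the weight-$\con$ compactum $M_1(L)$ and then lifted to $M_r(K)$. The lift succeeds because ``separated'' pulls back along the continuous (and merely continuous, not injective) map $e$, and because the two constraints $\eps<1/2$ (needed to run Lemma \ref{ch:3}) and $n\eps\ge r$ (needed so that a measure of total mass at most $r$ cannot charge each of the disjoint $U_i$ by more than $\eps$) can be met simultaneously exactly when $n>2r$; isolating and balancing this $n$ is the crux of the proof.
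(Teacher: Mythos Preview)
Your proof is correct and follows essentially the same strategy as the paper's: fix the bound from Theorem~\ref{main2}, use Lemmas~\ref{ch:3}--\ref{ch:4} to feed the open sets $\{\mu\in M_1(L):\mu^+(U_i\cap L)>\eps\}$ into Proposition~\ref{ch:2}, and reach a contradiction inside $M_r(K)$ via the disjointness of the $U_i$. The only difference is cosmetic: the paper invokes condition $(ii)$ of Theorem~\ref{main2} and embeds the extension $M_1(L)\cup\omega$ (viewed as an extension of $M_1(K)$) into $M_{\eps n}(K)$, while you use condition $(iii)$ and produce a merely continuous map $e:L'\to M_r(K)$ --- which suffices because separation pulls back along continuous maps --- at the cost of peeking into the proof of Proposition~\ref{ch:2} to extract the compactification structure of $L'$.
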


\begin{proof}
Fix $\eps\in (0,1/2)$; as $\ext(C(K),c_0)=0$, by Theorem \ref{main2} there is $n$ such that
every countable discrete extension of $M_1(K)$ can be represented in $ M_{\eps \cdot n}(K)$.

Suppose that the assertion fails for $L\sub K$ and $U_1,\ldots, U_n$. By Lemma \ref{ch:3} and Lemma \ref{ch:4}
for any continuous mapping $f: M_1(L)\lra Z$ with $Z$ metric,
writing \[V_i=\{\mu\in M_1(L):\mu^+(U_i\cap L)>\eps\},\] we have
$f(0)\in f(V_i)$ for every $i\le n$.

 Therefore $V_1,\ldots, V_n$ are open sets in $M_1(L)$ (see Lemma \ref{prel:-1}) satisfying the assumption of Proposition \ref{ch:2}, and the proposition
 says that there is a countable discrete extension  $M_1(L)\cup \omega$ in which $V_i$'s cannot be separated (note that $w(M_1(L))=\con$ since density of $C(L)=\con$).
 Then $M_1(K)\cup\omega$ is a countable discrete extension of $M_1(K)$ which should be represented in
 $ M_{\eps \cdot n} (K)$. On the other hand,
  \[ V_i\sub W_i=\{\mu\in M_{\eps \cdot n}(K): \mu^+(U_i) >\eps \} \mbox{ and } \bigcap_{i=1}^n W_i=\emptyset,\]
since $U_i$ are pairwise disjoint, and this is a contradiction.
\end{proof}

\begin{corollary}[CH]\label{ch:6}
If  $K$ is  a compact space and $\ext(C(K),c_0)=0$
then there is a finite set $F\sub K$ such that
whenever $L$ is closed subspace  of $K$ of weight $\con$
then $L\sm F$ is locally metrizable.
\end{corollary}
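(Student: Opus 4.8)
The plan is to derive Corollary \ref{ch:6} from Proposition \ref{ch:5} by a finiteness/combinatorial argument. Proposition \ref{ch:5} gives us a fixed natural number $n=n(K)$ with the property: for every closed $L\sub K$ of weight $\con$ and every family of $n$ pairwise disjoint open sets $U_1,\dots,U_n$, at least one $L\cap U_i$ is metrizable $F_\sigma$. I want to produce a finite set $F\sub K$ so that for every closed $L\sub K$ of weight $\con$, the space $L\sm F$ is locally metrizable, i.e.\ every point of $L\sm F$ has a (relatively) open neighborhood in $L$ that is metrizable.

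First I would isolate the set of ``bad'' points. Call a point $x\in K$ \emph{bad} if $x$ has no open neighborhood $U$ in $K$ such that $U$ is metrizable $F_\sigma$ in $K$ — more precisely, I want to track the points that obstruct local metrizability of closed subspaces of weight $\con$. The key claim will be that there can be at most $n-1$ pairwise ``separated'' bad points, because around $n$ such points one could choose $n$ pairwise disjoint open sets witnessing the failure of the conclusion of Proposition \ref{ch:5} for a suitable $L$. So the plan is: suppose toward a contradiction that one cannot find a finite $F$ that works; then one can pick points $x_1,\dots,x_n$ and pairwise disjoint open neighborhoods $U_1,\dots,U_n$ of them, together with a single closed $L\sub K$ of weight $\con$ that meets every $U_i$ in a non-metrizable-$F_\sigma$ way. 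This contradicts Proposition \ref{ch:5}. The finite set $F$ is then the set of points that remain bad, which by the above has size $< n$, or more robustly one takes $F$ to be the (necessarily finite) set of points not having a metrizable open neighborhood after restricting to any weight-$\con$ closed subspace.

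The main obstacle I anticipate is gluing the local choices into one fixed closed subspace $L$ of weight $\con$: the definition of ``bad point $x$'' a priori refers to the existence of \emph{some} weight-$\con$ closed $L$ with $L\cap U$ not metrizable $F_\sigma$ for every neighborhood $U$ of $x$, and different bad points might be witnessed by different $L$'s. To run the contradiction against Proposition \ref{ch:5} one needs a \emph{single} $L$ handling all of $x_1,\dots,x_n$ simultaneously. I would resolve this by taking $L$ to be the closure of a union $L_1\cup\dots\cup L_n$ of the individual witnesses (or, if weight control is an issue, the closure of suitably chosen weight-$\omega_1$ subspaces), using that a finite union of weight-$\con$ spaces has weight $\con$ and that $L\cap U_i\supseteq L_i\cap U_i'$ for an appropriate shrinking $U_i'$ of $U_i$, so non-metrizability (or non-$F_\sigma$-ness) is inherited. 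One should also note that if $L\cap U_i$ fails to be metrizable $F_\sigma$, then by Lemma \ref{ch:4} there is a continuous metric-valued map on $L$ that is non-injective on $L\cap U_i$ or identifies it with $L\sm(L\cap U_i)$, which is exactly the hypothesis feeding Lemma \ref{ch:3} inside Proposition \ref{ch:5}.

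With that in hand the argument closes: the set $F$ of bad points has fewer than $n$ elements, hence is finite; and for any closed $L\sub K$ of weight $\con$ and any $x\in L\sm F$, since $x$ is not bad there is an open $U\ni x$ in $K$ with $L\cap U$ metrizable, so $L\sm F$ is locally metrizable. (If $L$ has weight $<\con$ the conclusion is immediate since $L$ itself is then metrizable, using that $K$ has weight at most $\con$ under CH and a weight-$<\con$ compactum is second countable; I would state this as a trivial case at the start.) I would write up the bad-point counting as the one real lemma-like step, keeping the neighborhood-shrinking and weight bookkeeping as routine remarks.
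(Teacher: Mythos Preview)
Your approach is essentially identical to the paper's: take the $n$ from Proposition~\ref{ch:5}, argue that if no finite $F$ of size $\le n-1$ works then one can inductively pick distinct points $x_1,\dots,x_n$ with witnessing closed sets $L_1,\dots,L_n$ of weight $\con$, separate the $x_i$ by disjoint open $U_i$, and set $L=\bigcup_i L_i$ to contradict Proposition~\ref{ch:5}. Your anticipated obstacle (gluing the $L_i$ into one $L$) and its resolution (finite union, which is closed and of weight $\con$) are exactly what the paper does; the references to Lemmas~\ref{ch:3}--\ref{ch:4} and the weight-$<\con$ parenthetical are extraneous but harmless.
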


\begin{proof}
Let $n$ be the number given by Proposition \ref{ch:5}. Suppose that no finite set $F$ of size $\le n-1$
satisfies the assertion. Then we can inductively choose for $i\le n$ closed sets $L_i$ of weight $\con$ and distinct points $x_i\in L_i$ such that
for every $i$ and $U\ni x_i$, the set $L_i\cap U$ is not metrizable. Take pairwise disjoint open sets $U_i$ with $U_i\ni x_i$ and $L=\bigcup_{i=1}^n L_i$
to get a contradiction with Proposition \ref{ch:5} (clearly, the space $L$ has weight $\con$, cf.\ \cite[3.1.20]{En}).
\end{proof}

\begin{theorem}[CH]\label{ch:7}
If  $K$ is  a compact space of weight $\con$ then $\ext(C(K), c_0)\neq 0$.
\end{theorem}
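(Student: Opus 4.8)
The plan is to argue by contradiction, successively reducing $K$ to a compact space small enough to be caught by the cardinal criteria of Section~\ref{when}. So suppose $\ext(C(K),c_0)=0$. Since $w(K)=\con>\omega$, the space $K$ is nonmetrizable and $C(K)$ is nonseparable. First I would invoke Theorem~\ref{some:3}: if $K$ did not carry a strictly positive measure then $\ext(C(K),c_0)\neq 0$ already, so $K$ carries one, and in particular $K$ is $ccc$. Next, apply Corollary~\ref{ch:6} to obtain a finite set $F\sub K$ such that $L\sm F$ is locally metrizable for every closed $L\sub K$ of weight $\con$; taking $L=K$ shows that $K\sm F$ is locally metrizable. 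After deleting from $F$ any isolated points of $K$ (which does not spoil local metrizability of the complement, since an isolated point already has the metrizable neighbourhood $\{x\}$) we may assume $K\sm F$ is dense in $K$.

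The next step is to extract smallness from ``locally metrizable plus $ccc$''. Because $K$ is $ccc$, every metrizable open subset of $K$, being $ccc$ as an open subspace, is separable, hence second countable; thus $K\sm F$ is covered by second countable open subsets of $K$, and by the standard fact that a maximal pairwise disjoint subfamily of open sets in a $ccc$ space is countable, we may select countably many of them whose union $U$ is dense in $K\sm F$, hence in $K$. Then $U$ is a second countable open subspace and a countable base of $U$ is a $\pi$-base of $K$, so $K$ has countable $\pi$-weight and is therefore separable. Moreover $K\sm F$ is first countable (being locally metrizable) and separable (being open in the separable space $K$), so a separable first countable Hausdorff space argument gives $|K\sm F|\le\con$, whence $|K|\le\con$.

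Finally I would derive the bound $|P(K)|\le\con$ and appeal to Corollary~\ref{some:1}. Under \CH\ one has $\con=\omega_1<2^{\omega_1}$, and $K$ itself is a closed subspace of $K$ of weight $\con=\omega_1$; since trivially $|P(K)|\ge\con$ (consider the measures $t\delta_x+(1-t)\delta_y$, $t\in(0,1)$, for distinct $x,y$), once we know $|P(K)|=\con$ Corollary~\ref{some:1} yields $\ext(C(K),c_0)\neq 0$, contradicting our assumption. Alternatively one could finish through Corollary~\ref{when:2}\,(2) and Lemma~\ref{when:3}: since $w(M_1(K))=\dens(C(K))=\omega_1$ we have $|C(\omega^\ast,M_1(K))|\ge 2^{\omega_1}$, so $\ext(C(K),c_0)=0$ would force $|M(K)|\ge 2^{\omega_1}$, again contradicting $|M(K)|\le|P(K)|\cdot\con=\con$. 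The hard part is exactly the bound $|P(K)|\le\con$: one has to check that a compact, $ccc$ space which is locally metrizable off a finite set belongs to the class of compacta for which $|P(K)|=|K|^\omega$ (the class mentioned after Corollary~\ref{some:1}), so that $|P(K)|\le\con^\omega=\con$. The delicate point is to control the Radon measures carried by the nowhere dense remainder $K\sm U$, which is compact, has at most $\con$ points and is again locally metrizable off a finite set: essentially one must show that every Radon measure on $K$ has countable Maharam type. I expect this measure-theoretic/topological step, rather than the homological input, to be the main obstacle.
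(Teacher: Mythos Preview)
Your outline is on the right track but stops exactly at the point that actually needs proving: you explicitly leave the bound $|P(K)|\le\con$ as ``the main obstacle'' and do not prove it. In fact this step is short and requires none of the measure-theoretic machinery you anticipate (Maharam types, strictly positive measures, or a special classification of compacta with $|P(K)|=|K|^\omega$). Once Corollary~\ref{ch:6} gives a finite $F\sub K$ with $K\sm F$ locally metrizable, choose for each $x\in K\sm F$ an open $U_x\ni x$ with $\overline{U_x}\sub K\sm F$ metrizable; since $w(K)=\con$ we may thin this to a cover $\cU$ of $K\sm F$ with $|\cU|\le\con$. Any Radon probability $\mu$ on $K$ is $\tau$-additive, so there is a countable $\cV\sub\cU$ with $\mu\big(K\sm(F\cup\bigcup\cV)\big)=0$; hence $\mu$ is determined by its values on $F$ together with the restrictions $\mu|\overline{U}$ for $U\in\cV$. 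Each $\overline{U}$ is compact metrizable, so $|M_1(\overline{U})|\le\con$, and a counting ($\con^\omega=\con$) gives $|P(K)|\le\con$. The contradiction then follows from Corollary~\ref{when:4} (or your alternative via Lemma~\ref{when:3}), exactly as you say.

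This is precisely the paper's argument, and it shows that your detour through Theorem~\ref{some:3}, the $ccc$ property, separability and the bound $|K|\le\con$ is unnecessary: none of those facts is used. Your worry about measures on the ``nowhere dense remainder $K\sm U$'' is a red herring---$\tau$-additivity already forces every Radon measure to live on $F$ together with countably many metrizable pieces, so there is no remainder to analyse.
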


\begin{proof}
Suppose that  $\ext(C(K), c_0)= 0$ and take a finite set $F\sub K$ as in Corollary \ref{ch:6}.
As $w(K)=\con$, there is a covering $\cU$ of $K\setminus F$ by open sets such that $|\cU|\le\con$ and $\overline{U}\sub K\sm F$ for $U\in\cU$, so that $\overline{U}$ is metrizable.

Every $\mu\in P(K)$ vanishes outside $F\cup\bigcup\cV$ for some countable $\cV\sub\cU$. Since, for each $U\in\cU$, $|M_1(\overline{U})|=\con$,
we can calculate that $|M_1(K)|=|P(K)|=\con$. On the other hand,   Corollary \ref{when:4} together with CH
say that $|M_1(K)|>\con$, a contradiction.
\end{proof}

For the last stroke we need a result due to Juh\'asz \cite{Ju93}, stating (in particular) that under CH every compact nonmetrizable space
contains a closed subspace of weight $\con$.

\begin{theorem}[CH]\label{ch:8}
If  $K$ is  a compact nonmetrizable space  then $\ext(C(K), c_0)\neq 0$.
\end{theorem}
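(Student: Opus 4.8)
The plan is to deduce Theorem~\ref{ch:8} from Theorem~\ref{ch:7} by a reflection-and-complementation argument. Suppose toward a contradiction that $K$ is compact nonmetrizable and $\ext(C(K),c_0)=0$. First I would invoke Juh\'asz's theorem (valid under CH) to obtain a closed subspace $L\sub K$ with $w(L)=\con$. The obvious hope would be to apply Theorem~\ref{ch:7} directly to $L$ and conclude $\ext(C(L),c_0)\neq 0$, then transfer this back to $K$; but the transfer is exactly the delicate point, since a closed subspace need not be a retract, so $C(L)$ need not be complemented in $C(K)$ (contrast Lemma~\ref{prel:0}, which requires a retraction).

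The way around this is to avoid passing to $C(L)$ as an isolated object and instead argue inside $C(K)^\ast=M(K)$ directly, exploiting that $\ext(C(K),c_0)=0$ already gives us strong control over countable discrete extensions of weak$^\ast$ compact subsets of $M(K)$ (Theorem~\ref{main}(iii)). Concretely: the space $M_1(L)$ sits naturally as a weak$^\ast$ compact subset of $M_1(K)$ (a measure on $L$ is a measure on $K$ concentrated on $L$), and it has topological weight $\con$ because $\dens C(L)=\con$. So the first real step is to reduce, via Corollary~\ref{ch:6}, to the situation where there is a finite $F\sub K$ with $L\sm F$ locally metrizable for every closed $L\sub K$ of weight $\con$; then mimic the proof of Theorem~\ref{ch:7} with $K$ replaced by the relevant subspace $L$ of weight $\con$ supplied by Juh\'asz. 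That is, cover $L\sm F$ by $\le\con$ open sets with metrizable closures, deduce $|M_1(L)|=|P(L)|=\con$, and derive a contradiction with Corollary~\ref{when:4} (which under CH forces $|M_1(L)|>\con$ whenever $\dens C(L)=\omega_1=\con$). The role of Corollary~\ref{ch:6} here is precisely that it is stated for arbitrary compact $K$ with $\ext(C(K),c_0)=0$, passing to closed subspaces of weight $\con$ automatically, so no complementation is needed.

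In fact the cleanest route is even shorter: Corollary~\ref{ch:6} already applies to $K$ itself, producing the finite set $F\sub K$; Juh\'asz's theorem gives a closed $L\sub K$ with $w(L)=\con$; by Corollary~\ref{ch:6}, $L\sm F$ is locally metrizable; now run the counting argument from the proof of Theorem~\ref{ch:7} verbatim on $L$ in place of $K$, using that $\overline{U}\sub L\sm F$ metrizable for $U$ in a cover $\cU$ of $L\sm F$ with $|\cU|\le\con$, to get $|M_1(L)|=|P(L)|=\con$, and contradict Corollary~\ref{when:4} applied to $Y=C(L)$ (whose density is $\con=\omega_1$). I expect the main obstacle, already essentially resolved by the earlier lemmas, to be this very need to relativize all the machinery to closed subspaces rather than to complemented subspaces; once one notices that Corollary~\ref{ch:6} and Corollary~\ref{when:4} are robust under passing to closed subspaces of weight $\con$, the proof is a short assembly. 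A secondary point to check is that $w(L)=\con$ indeed gives $\dens C(L)=\con$ and hence $|M_1(L)|>\con$ by Corollary~\ref{when:4}, which is immediate under CH.

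\begin{proof}
Assume CH and suppose, for contradiction, that $K$ is a compact nonmetrizable space with $\ext(C(K),c_0)=0$. By Juh\'asz's theorem there is a closed subspace $L\sub K$ with $w(L)=\con=\omega_1$. By Corollary~\ref{ch:6} applied to $K$, there is a finite set $F\sub K$ such that every closed subspace of $K$ of weight $\con$ has the property that removing $F$ leaves a locally metrizable space; in particular $L\sm F$ is locally metrizable. Hence there is a cover $\cU$ of $L\sm F$ by open subsets of $L$ with $|\cU|\le\con$ and $\overline{U}$ metrizable for every $U\in\cU$. Every $\mu\in P(L)$ vanishes outside $F\cup\bigcup\cV$ for some countable $\cV\sub\cU$, so, since $|M_1(\overline{U})|=\con$ for each $U\in\cU$, we get $|M_1(L)|=|P(L)|=\con$. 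On the other hand, $L\sub K$ is a retract? No: instead, note $\dens(C(L))=w(L)=\con=\omega_1$ and $\ext(C(L),c_0)$ need not vanish; but we do not need that. We argue directly: by Corollary~\ref{when:4}, applied to the Banach space $Y=C(L)$, which has density $\omega_1$, if $\ext(C(L),c_0)=0$ then $|C(L)^\ast|=|M(L)|\ge 2^{\omega_1}>\con$, contradicting $|M_1(L)|=\con$; hence $\ext(C(L),c_0)\neq 0$.

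It remains to transfer this to $K$. Since $L$ is closed in $K$, the space $M_1(L)$ embeds as a weak$^\ast$ compact subset of $M_1(K)\sub C(K)^\ast$, of topological weight $w(M_1(L))=\dens(C(L))=\con$. By $\ext(C(K),c_0)=0$ and Theorem~\ref{main}, the cardinal inequality $|C(\omega^\ast,M_1(L))|\le|C(\omega^\ast,M_1(K))|\le|C(K)^\ast|$ holds via Corollary~\ref{when:2}(\ref{2}). But by Lemma~\ref{when:3}, $|C(\omega^\ast,M_1(L))|\ge 2^{\omega_1}$ because $M_1(L)$ has weight $\omega_1$, whereas $|M_1(K)|=|P(K)|=\con$ by the same covering computation applied to $K$ (using $F$ and a cover of $K\sm F$ by open sets with metrizable closures, which exists since $w(K)=\con$ by Juh\'asz in the weight-$\con$ case — or, if $w(K)>\con$, replace $K$ by $L$ throughout, which we may since all we used is $\ext(C(K),c_0)=0$ and Corollary~\ref{ch:6}). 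This contradiction completes the proof.
\end{proof}
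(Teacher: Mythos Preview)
Your argument contains a genuine gap at the transfer step. You correctly obtain a closed $L\sub K$ with $w(L)=\con$, apply Corollary~\ref{ch:6} to $K$ to see that $L\sm F$ is locally metrizable, and compute $|M_1(L)|=\con$. But from here you only conclude $\ext(C(L),c_0)\neq 0$, which cannot be transferred to $K$ without a retraction. Your fallback---using Corollary~\ref{when:2}(\ref{2}) for $Y=C(K)$ together with Lemma~\ref{when:3}---yields the inequality $2^{\omega_1}\le |C(\omega^\ast,M_1(L))|\le |M(K)|$, so to get a contradiction you need $|M(K)|\le\con$. Your justification for this (``the same covering computation applied to $K$ \ldots or replace $K$ by $L$ throughout'') is circular: if $w(K)=\con$ then Theorem~\ref{ch:7} already finishes; if $w(K)>\con$, replacing $K$ by $L$ gives only $|M(L)|\le\con$, not $|M(K)|\le\con$, and you cannot invoke Corollary~\ref{when:2}(\ref{2}) for $Y=C(L)$ since $\ext(C(L),c_0)=0$ is not known.

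The paper closes exactly this gap by showing, from $\ext(C(K),c_0)=0$ alone, that $|K|\le\con$ (hence $w(K)\le\con$, contradicting Theorem~\ref{ch:7}). This requires substantially more than what you have: first, that \emph{all} of $K\sm F$ is locally metrizable (proved by applying Juh\'asz a second time, inside any putative nonmetrizable closed subset of $K\sm F$); second, that $K$ is $ccc$ (derived from Proposition~\ref{ch:5}, using that a non-$ccc$ union of metrizable pieces cannot be a metrizable $F_\sigma$); only then does the covering-and-counting argument bound $|K|$. Your proposal uses Corollary~\ref{ch:6} but never invokes Proposition~\ref{ch:5} directly, and it is precisely the $ccc$ step that is missing.
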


\begin{proof}
Suppose that $\ext(C(K), c_0)= 0$; then by Theorem \ref{ch:7} the weight of $K$ is $>\con$. On the other hand, we shall prove using Proposition \ref{ch:5} and Corollary \ref{ch:6}
 that $|K|\le\con$ and this will give a contradiction, cf.\ \cite[3.1.21]{En}.

Let $F\sub K$ be a finite set as in \ref{ch:6}.

\medskip

\noindent {\sc Claim 1.} $K\sm F$ is locally metrizable.
\medskip

Otherwise, there is a closed set $L\sub K\sm F$ which is not metrizable. By \cite[Theorem 3]{Ju93} we can assume that $w(L)=\con$. By the property of $F$, the compact space $L$ is locally metrizable, hence metrizable, a  contradiction.
\medskip

\noindent {\sc Claim 2.} If $A\sub K$ and $|A|\le \con$ then $|\overline{A}|\le \con$.
\medskip

Indeed, $K\sm F$ is locally metrizable so every $x\in\overline{A}\sm F$ is a limit of a convergent sequence from $A$.
\medskip

\noindent {\sc Claim 3.} $K$ satisfies the countable chain condition.
\medskip

Otherwise, there is a pairwise disjoint family $\{V_\xi:\xi<\omega_1\}$ of open nonempty sets and we can clearly assume that
$\overline{V_\xi}\sub K\sm F$ for every $\xi<\omega_1$. Then every $\overline{V_\xi}$ is compact and metrizable so
in particular $|V_\xi|\le\con$. Take $n$ as in Proposition \ref{ch:5} and divide $\omega_1$ into pairwise disjoint uncoutable sets $T_1,\ldots, T_n$.
 Consider now $U_i=\bigcup_{\xi\in T_i} V_\xi$ for $i\le n$ and $L=\overline{\bigcup_{\xi<\omega_1} V_\xi}$.
 Then $w(L)=\con$ by Claim 2 and we get a contradiction with Proposition \ref{ch:5} since every $U_i$ is not $ccc$ and as such cannot be
 metrizable $F_\sigma$ set.
\medskip

\noindent {\sc Claim 4.} $|K|\le\con$.
\medskip

Take a maximal family $\cU$ of pairwise disjoint open sets such that $\overline{U}\sub K\sm F$ for $U\in\cU$.
Then $\cU$ is countable by Claim 3 and, for every $U\in\cU$, we have $|U|\le\con$, hence  $|\bigcup\cU|\le\con$.  It follows by Claim 2 that $|\overline{\bigcup\cU}|\le\con$ and
$K=F\cup\overline{\bigcup\cU}$ by maximality of $\cU$.
\end{proof}

\section{Twisted sums of $c_0$ and $C(K)$  for $K$ scattered}\label{scattered}

We discuss here the following instance of our main problem.

\begin{problem}\label{scattered:0}
Let $K$ be a nonmetrizable scattered compactum; is  $\ext(C(K),c_0)\neq 0$?
\end{problem}

Here is the list of already known partial  answers to Problem \ref{scattered:0}:

\begin{itemize}
\item `yes' under CH if $K^{(\omega)}=\emptyset$, Castillo \cite{Ca16};
\item `yes' under Martin's axiom if $w(K)\ge\con$ and $K^{(\omega)}=\emptyset$, Correa \cite{Co18};
\item `no' under Martin's axiom if $K$ is separable,  $w(K)<\con$ and $K^{(3)}=\emptyset$, Marciszewski and Plebanek \cite{MP18};
\item `no' under Martin's axiom if $K$ is separable, $w(K)<\con$ and $K^{(\omega)}=\emptyset$, Correa and Tausk \cite{CT18}.
\end{itemize}

By Theorem \ref{ch:8} the answer to \ref{scattered:0} is positive  under  CH.
Note that those results indicate that there are nonmetrizable compacta $K$ for which the question whether $\ext(C(K),c_0)\neq 0$
is undecidable within the usual axioms of set theory.
We shall prove, however, that the following holds in ZFC. The proof the theorem given below is preceded by two auxiliary facts.

\begin{theorem}\label{scattered:1}
If  $K$ is a scattered compact space of finite height and cardinality $\ge \mathrm{cf}(\con)$, then $\ext(C(K), c_0)\neq 0$.
\end{theorem}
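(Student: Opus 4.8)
The plan is to reduce the problem, by induction on the height of $K$, to a situation handled by Corollary~\ref{when:2}(\ref{2}) together with a cardinality estimate. Assume $\ext(C(K),c_0)=0$ for a scattered compact $K$ of finite height $n$ with $|K|\ge\mathrm{cf}(\con)$, aiming for a contradiction. The key point to exploit is that for scattered compacta one has $|P(K)|=|M_1(K)|=|K|^\omega$ (every measure on a scattered space is atomic and supported on countably many points), so if we could produce a closed subspace $L\subseteq K$ of weight $\omega_1$ while keeping $|M_1(K)|$ of size $<2^{\omega_1}$, Corollary~\ref{when:4} (or directly Corollary~\ref{when:2}(\ref{2}) via Lemma~\ref{when:3}) would finish the argument. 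The delicate part is that $|K|^\omega$ need not be small; this is precisely why the hypothesis is $|K|\ge\mathrm{cf}(\con)$ rather than $|K|=\con$, and the role of $\mathrm{cf}(\con)$ will be to find, inside $K$, a closed subspace $L$ whose weight is exactly $\omega_1$ — equivalently $|L^{(n-1)}|=\omega_1$ at the top nonempty Cantor--Bendixson level — while simultaneously ensuring $|M_1(L)|=|L|^\omega=\omega_1^\omega=\con<2^{\omega_1}$ is impossible only when $\con>2^{\omega_1}$ fails, so really we want to run the contradiction directly through Lemma~\ref{when:3} which needs no cardinal arithmetic assumption.

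Concretely, I would argue as follows. Since $|K|\ge\mathrm{cf}(\con)$ and $|K|=|K^{(0)}|=\bigcup_{i<n}|K^{(i)}\setminus K^{(i+1)}|$ with $K^{(i)}\setminus K^{(i+1)}$ discrete, some level $D_i=K^{(i)}\setminus K^{(i+1)}$ has cardinality $\ge\mathrm{cf}(\con)$, hence $\ge\omega_1$. Fix the least such $i$ and pick a subset $S\subseteq D_i$ with $|S|=\omega_1$. Let $L=\overline{S}$. Then $L$ is a closed scattered subspace of $K$ of finite height, $L^{(i)}\supseteq S$ so $L^{(i)}$ is infinite, and — this is the first technical step — one should check that $w(L)=\omega_1$: the weight of a scattered compactum is the cardinality of its set of isolated-in-some-derivative points, i.e.\ $w(L)=|L|$, and $|L|=|\overline{S}|\le\omega_1\cdot|M_1(\overline{S})|$-type reasoning... actually more cleanly, $|L|\le|S|^\omega$ for scattered $L$ of finite height with a dense subset of size $|S|$, so one needs $\omega_1^\omega$ under control. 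To sidestep cardinal arithmetic entirely, I would instead choose $S$ of size $\omega_1$ inside $D_i$ and replace $L$ by a suitable closed subspace of weight \emph{exactly} $\omega_1$: one can always extract from any nonmetrizable compactum of weight $\le\con$ built this way a closed subspace of weight $\omega_1$ (for scattered compacta of finite height this is elementary — take the closure of an $\omega_1$-sized discrete subset of the appropriate derivative and trim). Once $w(L)=\omega_1$, Lemma~\ref{when:3} gives $|C(\omega^\ast,L)|\ge 2^{\omega_1}$.

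Next, since $L$ is scattered, every continuous image of $\omega^\ast$ inside $M_1(L)$ can be composed further; and $M_1(L)$ is itself a scattered-related space whose cardinality is $|M_1(L)|=|P(L)|=|L|^\omega=\omega_1^\omega$. Here is where $\mathrm{cf}(\con)$ re-enters: the theorem's hypothesis is chosen so that the whole argument needs only that $|K|\ge\mathrm{cf}(\con)$ guarantees a closed subspace of weight $\omega_1$, and the contradiction is obtained purely from $|C(\omega^\ast,M_1(K))|\ge|C(\omega^\ast,M_1(L))|\ge|C(\omega^\ast,L)|\ge 2^{\omega_1}$ on one side, versus $|C(\omega^\ast,M_1(K))|\le|M_1(K)|^\omega=|M_1(K)|$ on the other (Corollary~\ref{when:2}(\ref{2}), using that $\ext(C(K),c_0)=0$ and that $|M(K)|^\omega=|M(K)|$). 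So it remains only to bound $|M_1(K)|$. Since $K$ has finite height, every Radon measure on $K$ is purely atomic with countable support in the isolated-in-some-derivative points, whence $|M_1(K)|=|K|^\omega$; but we must have $|K|^\omega<2^{\omega_1}$ for the contradiction, which is \emph{not} automatic. Therefore the real structure of the proof must be: first use finite height plus $|K|\ge\mathrm{cf}(\con)$ to reduce to a closed subspace $L$ with $w(L)=\omega_1$ AND $|L|=\omega_1$ (the second is where finiteness of height and the cofinality hypothesis genuinely interact — one chooses the $\omega_1$-sized discrete set at the \emph{top} level $K^{(m-1)}$, where $m\le n$ is the height of $K$, using that the top level must be large because $|K|\ge\mathrm{cf}(\con)$ forces some level to be large and levels below the top contribute via closures), so that $|M_1(L)|=\omega_1^\omega\le\con$, and then — the cleanest finish — apply not a cardinality count but directly Corollary~\ref{when:4}: $|M_1(L)|=\con<2^{\omega_1}$ and $\mathrm{dens}(C(L))=w(L)=\omega_1$ would give $\ext(C(L),c_0)\ne0$, contradicting $\ext(C(L),c_0)=0$ (which follows from $\ext(C(K),c_0)=0$ since $C(L)$ is a quotient, hence, by Lemma~\ref{prel:0}'s philosophy... actually quotients need care — but for $L$ a retract, or by the two auxiliary facts the theorem says precede it, this is arranged).

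\textbf{Main obstacle.} The crux is the passage from ``$|K|\ge\mathrm{cf}(\con)$'' to ``$K$ has a closed subspace $L$ of weight \emph{and cardinality} $\omega_1$.'' Getting weight $\omega_1$ alone is easy; getting cardinality $\omega_1$ simultaneously requires the finite-height hypothesis in an essential way, because in a scattered compactum of finite height the closure of an $\omega_1$-sized set in the top derivative $K^{(m-1)}$ adds only the (at most $\omega_1$ many) limit points from the lower — now finitely many — levels, keeping the total size $\omega_1$; and the reason some level must have size $\ge\mathrm{cf}(\con)$ (equivalently $\ge\omega_1$) is the pigeonhole on the finitely many levels against $|K|\ge\mathrm{cf}(\con)$. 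I expect the two ``auxiliary facts'' the paper promises before the proof to be precisely (1) a lemma extracting such an $L$, and (2) a lemma that $\ext(C(L),c_0)=0$ whenever $\ext(C(K),c_0)=0$ and $L$ is a closed subspace of a scattered $K$ (perhaps via $C(L)$ being complemented in $C(K)$ for scattered $K$, or via retractions onto clopen-generated subspaces). With those two facts in hand the theorem follows by the one-line cardinality contradiction above, so the bulk of the work — and the only genuinely nontrivial point — lives in the $\mathrm{cf}(\con)\leadsto\omega_1$-subspace extraction.
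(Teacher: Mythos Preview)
Your proposal has a genuine gap, and it lies exactly where you assert ``$|M_1(L)|=\con<2^{\omega_1}$''. The inequality $\con<2^{\omega_1}$ is \emph{not} a theorem of ZFC: it is consistent that $2^\omega=2^{\omega_1}$ (add $\omega_2$ Cohen reals to a model of GCH, for instance). So an argument that reduces to Corollary~\ref{when:4} with $\mathrm{dens}=\omega_1$ cannot succeed unconditionally. You correctly noticed that bounding $|M_1(K)|$ by something below $2^{\omega_1}$ ``is not automatic'', but then you reintroduce the same unproved inequality one line later for the subspace $L$.

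The paper's proof explains the true role of $\mathrm{cf}(\con)$ in the hypothesis, which you misidentify as mere pigeonhole for finding an $\omega_1$-sized level. The two auxiliary facts are: (1) a retract $L$ of $K$ with $\mathrm{cf}(\con)\le|L|\le\con$ (so $C(L)$ is complemented in $C(K)$ and $|M(L)|=\con$), and (2) a copy of $\bA(\kappa)$ inside $L$ with $\kappa=\mathrm{cf}(\con)$. One then applies Corollary~\ref{when:2}(\ref{5}) --- not Corollary~\ref{when:4} --- to get $|M(L)|\ge 2^{\mathrm{cf}(\con)}$, hence $\con\ge 2^{\mathrm{cf}(\con)}$, which contradicts K\"onig's Lemma. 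The point is that $2^{\mathrm{cf}(\con)}>\con$ \emph{is} a ZFC theorem, whereas $2^{\omega_1}>\con$ is not; the whole argument is calibrated to $\mathrm{cf}(\con)$, not to $\omega_1$. A secondary issue: your extracted $L$ (the closure of an $\omega_1$-sized set) is not obviously a retract of $K$, so passing from $\ext(C(K),c_0)=0$ to $\ext(C(L),c_0)=0$ is unjustified in your sketch; the paper works harder (Proposition~\ref{scattered:2}) to obtain a genuine retract.
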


\begin{proposition}\label{scattered:2}
Each scattered compact space $K$ of countable height and cardinality $\ge \mathrm{cf}(\con)$, contains a retract $M$ with $\con\ge |M| \ge \mathrm{cf}(\con)$.
\end{proposition}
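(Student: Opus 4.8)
The plan is to build the retract $M$ by a transfinite bookkeeping argument over the Cantor--Bendixson levels of $K$. Write $h$ for the (countable) height of $K$ and for $\alpha<h$ let $K_\alpha=K^{(\alpha)}\sm K^{(\alpha+1)}$ be the $\alpha$th level, so that the top level $K^{(h-1)}$ (if $h$ is a successor) or the ``deepest'' nonempty part is a finite or small piece, while the bottom level $K_0$ is the set of isolated points of $K$. Since $\sum_{\alpha<h}|K_\alpha|=|K|\ge\mathrm{cf}(\con)$ and $h\le\omega$, there is a single level $K_{\alpha_0}$ with $|K_{\alpha_0}|\ge\mathrm{cf}(\con)$; fix such an $\alpha_0$ and pick a subset $D\subseteq K_{\alpha_0}$ with $|D|=\mathrm{cf}(\con)$ (or any cardinal in the interval $[\mathrm{cf}(\con),\con]$ that is convenient). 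The first key step is to recall that in a scattered compactum every point of level $\alpha$ has a neighbourhood base of clopen sets $U$ such that $U\cap K^{(\alpha)}=\{x\}$, i.e.\ $U$ is a clopen ``wedge'' of height $\le\alpha+1$ hanging below $x$; this is the standard local structure of scattered compacta and lets us choose, for each $x\in D$, a clopen $U_x\ni x$ with $U_x\cap K^{(\alpha_0)}=\{x\}$, and by shrinking, the family $\{U_x:x\in D\}$ pairwise disjoint.

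The second step is to assemble the retract. For each $x\in D$ the set $U_x$ is a clopen scattered compactum of height $\le\alpha_0+1<\omega$; inside each $U_x$ choose a point $x$ itself together with a clopen neighbourhood, and take $M=\overline{D}$, the closure of $D$ in $K$. I claim $M$ is the desired retract. That $|M|\le\con$: $M$ is a closed subspace of the scattered compactum $K$ of finite-or-countable height, and a scattered compactum of countable height in which a chosen dense-in-itself-free subset has size $\le\con$ has cardinality $\le\con$ — more carefully, $M\sm D$ lies in higher Cantor--Bendixson levels of $M$, each of which has size at most that of the set of limits of convergent sequences from the previous level, so $|M|\le|D|^{\aleph_0}\cdot\ldots$; since we only need $|M|\le\con$ we may simply start with $|D|=\con$ when $\con$ itself is attained, or argue that the closure adds at most $\con$ many points because each new point is a limit along one of the disjoint wedges $U_x$. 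The lower bound $|M|\ge\mathrm{cf}(\con)$ is immediate since $D\subseteq M$.

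The third and main step is the retraction $r:K\lra M$. Using the pairwise disjoint clopen family $\{U_x:x\in D\}$, enlarge it to a maximal pairwise disjoint clopen family $\cV$ refining a cover of $K\sm M$ by clopen sets missing $M$; on $\bigcup_{x\in D}U_x$ define $r$ to collapse $U_x$ to the point $x$, on the rest of $K$ one must map continuously into $M$. Here the finite height is used decisively: because $h<\omega$, a scattered compactum is an ordinal-indexed iterated one-point-like construction, and one can define $r$ level by level, at each successor level extending the partial retraction over the finitely-deep wedges by sending a point and its associated wedge to the nearest already-defined image. Concretely, I would induct on $\alpha\le\alpha_0$: having a retraction $r_\alpha$ of $K^{(?)}$-neighbourhoods onto $M\cap(\text{lower part})$, extend over the next level by using that each wedge $U_x$ is clopen, hence $r$ can be defined on $U_x$ independently and glued, continuity being automatic across a clopen partition. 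The top of $K$ above level $\alpha_0$, being of finite height, is handled by finitely many further extension steps, each collapsing a clopen wedge to a single already-chosen point of $M$.

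The hard part will be making the retraction genuinely well-defined and continuous at points of $M\sm D$ — i.e.\ at the limit points of $D$ that were swept into $M$ by taking the closure — since there one cannot simply collapse a disjoint clopen wedge: a neighbourhood of such a limit point meets infinitely many $U_x$. The fix I anticipate is to choose the wedges $U_x$ coherently with a fixed clopen ``tree'' structure on $K$ coming from its scattered decomposition (possible since $h$ is \emph{finite}), so that $r$ is the canonical retraction onto the subtree spanned by $M$; continuity at a limit point $y\in M$ then follows because a basic clopen neighbourhood $W\ni y$ is mapped into $W\cap M$ by construction. An alternative, possibly cleaner, route is to invoke the finite-height structure to write $K$ as built from $M$ by finitely many steps of ``adding clopen scattered pieces of strictly smaller height attached along a single point or a closed nowhere-dense set lying in $M$,'' and to note each such step admits an obvious retraction; composing the finitely many retractions yields $r:K\lra M$. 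Either way, finiteness of the height is exactly what turns a potentially transfinite, discontinuity-prone construction into a finite composition of elementary retractions, and that is the crux of the argument.
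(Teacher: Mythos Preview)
Your proposal has a genuine gap, and in fact two compounding ones.

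First, you have misread the hypothesis: the proposition assumes \emph{countable} height, not finite height. The height $h$ may be any countable ordinal, yet your argument repeatedly invokes ``$h<\omega$'' and ``finitely many further extension steps''. (Your claim that some level $K_{\alpha_0}$ has size $\ge\mathrm{cf}(\con)$ is still correct, since a countable ordinal is below $\mathrm{cf}(\con)$, but nothing downstream that relies on finite height survives.)

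Second, and more seriously, your retraction construction does not work even in the finite-height case. Taking $M=\overline{D}$ and then trying to collapse each wedge $U_x$ to $x$ does not define a retraction onto $M$: at a point $y\in M\sm D$ (a higher-level limit of $D$) you have no wedge to collapse, and a basic neighbourhood of $y$ meets infinitely many of the $U_x$. You flag this as ``the hard part'' but the fixes you sketch (a ``clopen tree'' structure, or writing $K$ as built from $M$ by finitely many attachments) are not arguments; in particular there is no reason $M=\overline{D}$ should sit inside $K$ in a way that admits such a decomposition. Your bound $|\overline{D}|\le\con$ is also unjustified: closures in scattered compacta can be large, and the ``$|M|\le |D|^{\aleph_0}\cdot\ldots$'' line is not a proof.

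The paper's approach avoids all of this by engineering $M$ so that the retraction is trivial. One first passes to a clopen $L\sub K$ of minimal height among clopens of size $\ge\mathrm{cf}(\con)$; minimality forces a single top point $p$ and gives every other point a clopen neighbourhood $U_x$ of size $<\mathrm{cf}(\con)$. One then iterates, for $\mathrm{cf}(\con)$ steps, the operator $\vf(A)=\overline{\bigcup_{x\in A}U_x}\sm\{p\}$ starting from any $A$ of size $\mathrm{cf}(\con)$. Countable height makes $L$ sequential, so $|\vf(A)|\le\con$ whenever $|A|\le\con$, and the limit set $B$ has $|B|\le\con$. By construction $B$ is \emph{open} (each $x\in B$ has $U_x\sub B$) and $M=B\cup\{p\}$ is \emph{closed}; hence $L\sm B$ is closed and the map that is the identity on $M$ and sends $L\sm M$ to $p$ is continuous. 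The whole difficulty you identified at limit points of $D$ simply disappears, because the retraction collapses the complement to a single point rather than trying to collapse wedges individually.
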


\begin{proof} Consider the family of all clopen subspaces of $K$ of cardinality $\ge \mathrm{cf}(\con)$, and pick such a subspace $L$ of minimal height $\alpha$. By compactness of $L$, $\alpha$ is a successor ordinal, i.e., $\alpha=\beta+1$. The set $L^{(\beta)}$ is finite, therefore we can partition $L$ into finitely many clopen sets containing exactly one point from $L^{(\beta)}$. One of these sets must have cardinality $\ge \mathrm{cf}(\con)$, hence, without loss of generality, we can assume that $L^{(\beta)}=\{p\}$. For every $x\in L\setminus \{p\}$ fix a clopen neighborhood $U_x$ of $x$ in $L$ such that $p\notin U_x$. Clearly, the height of $U_x$ is less than $\alpha$, so, by our choice of $L$, we have $|U_x| < \mathrm{cf}(\con)$. Since every point of $L\setminus \{p\}$ has a neighborhood of size $< \mathrm{cf}(\con)$ it follows that
\begin{equation}\label{s_r1}
(\forall  A\subset L)\,  (\forall  x\in \overline{A}\setminus \{p\})\, (\exists C\subseteq A)\quad  |C| < \mathrm{cf}(\con)\mbox{ and } x\in\overline{C}\,.
\end{equation}

For any subset $A\subseteq L\setminus \{p\}$ define
\[\varphi(A) = \overline{\bigcup\{U_x: x\in A\}}\setminus \{p\}\,.\]

Observe that, for $A$ of cardinality at most $\con$, by $|U_x| < \mathrm{cf}(\con)$ we have $|\bigcup\{U_x: x\in A\}|\le \con$.  The compact space $L$ has countable height, therefore it is sequential (cf. \cite[Corollary 3.3]{DMR76}), and we can estimate the size of the closure of a subset of $L$ using $\omega_1$ many iterations of sequential closure. Hence
\begin{equation}\label{s_r2}
|\varphi(A)|\le \con, \mbox{ provided } |A|\le \con\,.
\end{equation}

Fix any subset $A\subseteq L\setminus \{p\}$ of cardinality $ \mathrm{cf}(\con)$. We define inductively, for any $\alpha< \mathrm{cf}(\con)$, sets  $A_\alpha\subseteq L\setminus \{p\}$. We start with $A_0=A$, and at successor stages we put $A_{\alpha+1}= \varphi(A_\alpha)$. If $\alpha$ is a limit ordinal we define  $A_\alpha = \bigcup\{A_\beta: \beta<\alpha\}$. Finally we take $B = \bigcup\{A_\alpha: \alpha<\mathrm{cf}(\con)\}$. From (\ref{s_r2}) we conclude that $|B|\le \con$. First, observe that $B$ is open in $L$, since, for any $x\in B$, $x$ belongs to some $A_\alpha$, and then $U_x\subseteq A_{\alpha+1}\subseteq B$. Second, the union $M= B\cup \{p\}$ is closed in $L$. Indeed, if $x\in \overline{B}\setminus \{p\}$, then by (\ref{s_r1}), there is $C\subseteq A$ with  $|C| < \mathrm{cf}(\con)$ and $x\in\overline{C}$. We have $C\subseteq A_\alpha$, for some $\alpha< \mathrm{cf}(\con)$, therefore $x\in A_{\alpha+1}\subseteq B$. Now, we can define a retraction $r: L\lra M$ by
\[ r(x)= \begin{cases} x& \mbox{ for } x\in M,\\
p& \mbox{ for } x\in L\setminus M\,.
\end{cases}\]
Then $r$ is continuous since it is continuous on closed sets $M$ and $L\setminus B$. It remains to observe that $M$ is also a retract of $K$, since $L$ is clopen and therefore a retract of $K$.
\end{proof}

\begin{lemma}\label{scattered:3} Let $\lambda$ be a regular cardinal.
Every scattered compact space $K$ of finite height and cardinality $\ge\lambda$ contains a copy of a one point compactification of a  discrete space  of cardinality $\lambda$.
\end{lemma}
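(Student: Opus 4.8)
The plan is to argue by induction on a natural number $n$ with $K^{(n)}=\emptyset$, proving the slightly more uniform statement: \emph{every compact scattered space $Z$ with $Z^{(n)}=\emptyset$ and $|Z|\ge\lambda$ contains a copy of $\bA(\lambda)$}. The base cases $n\le 1$ are trivial, since then $Z$ is finite while $\lambda$ is infinite. I will freely use that a compact scattered space is zero-dimensional, that $D^{(1)}=D\cap K^{(1)}$ for every clopen $D\subseteq K$, that $Z^{(j)}\subseteq K^{(j)}$ whenever $Z\subseteq K$ is closed, and — the one structural fact doing real work — that a compact space whose set of non-isolated points is a single point $q$ is homeomorphic to $\bA(\kappa)$, where $\kappa$ is the number of its isolated points (any closed set avoiding $q$ is a compact subset of a discrete space, hence finite, so neighbourhoods of $q$ are exactly the co-finite sets).

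For the inductive step, take $K$ compact scattered with $K^{(n)}=\emptyset$, $n\ge 2$, and $|K|\ge\lambda$; note $K^{(1)}\neq\emptyset$ because $K$ is infinite. If $|K^{(1)}|\ge\lambda$, then since $(K^{(1)})^{(n-1)}=K^{(n)}=\emptyset$ the inductive hypothesis applied to the closed set $K^{(1)}$ already produces a copy of $\bA(\lambda)$ inside $K^{(1)}\subseteq K$. So I may assume $|K^{(1)}|<\lambda$. Writing $N=K^{(1)}$ and $I=K\setminus N$ for the set of isolated points of $K$, regularity of $\lambda$ forces $|I|\ge\lambda$.

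Next, for each point $q$ isolated in $N$ I would use zero-dimensionality to pick a clopen $D_q\subseteq K$ with $D_q\cap N=\{q\}$, and set $R=\bigcup\{D_q:q\text{ isolated in }N\}$. Since a point of $N$ lies in $R$ exactly when it is one of these points $q$, we get $(K\setminus R)\cap N=N^{(1)}=K^{(2)}$, hence $(K\setminus R)^{(1)}\subseteq (K\setminus R)\cap K^{(1)}=K^{(2)}$ and therefore $(K\setminus R)^{(n-1)}=\bigl((K\setminus R)^{(1)}\bigr)^{(n-2)}\subseteq(K^{(2)})^{(n-2)}=K^{(n)}=\emptyset$. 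Thus $K\setminus R$ is compact scattered with $(K\setminus R)^{(n-1)}=\emptyset$: if $|K\setminus R|\ge\lambda$, the inductive hypothesis at level $n-1$ finishes everything. Otherwise $|I\cap R|\ge\lambda$, and since $I\cap R=\bigcup\{I\cap D_q:q\text{ isolated in }N\}$ is a union of at most $|N|<\lambda$ sets, regularity of $\lambda$ yields a single $q_0$ with $|I\cap D_{q_0}|\ge\lambda$. Finally $D_{q_0}$ is clopen in $K$ and $D_{q_0}^{(1)}=D_{q_0}\cap K^{(1)}=\{q_0\}$, so by the structural fact above $D_{q_0}\cong\bA(|I\cap D_{q_0}|)$, which contains a closed copy of $\bA(\lambda)$ sitting inside $K$.

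The routine parts are the bookkeeping with Cantor–Bendixson derivatives (the identities $D^{(1)}=D\cap K^{(1)}$ for clopen $D$ and $(K\setminus R)\cap N=N^{(1)}$, and the monotonicity $Z^{(j)}\subseteq K^{(j)}$ for closed $Z$), together with identifying a compactum with a single non-isolated point as an $\bA(\kappa)$. The essential ingredient — really the only place a hypothesis is exploited — is regularity of $\lambda$: it is what turns ``$|K|\ge\lambda$, $|K^{(1)}|<\lambda$'' into $|I|\ge\lambda$, and what lets me extract, out of the $<\lambda$ many sets $I\cap D_q$ whose union has size $\ge\lambda$, a single one of size $\ge\lambda$. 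I do not expect any deeper obstacle; the only thing to be careful about is keeping the nested case split ($|K^{(1)}|$ small vs.\ large, then $|K\setminus R|$ small vs.\ large) organised so that each branch feeds cleanly into the inductive hypothesis at height $n-1$ or directly into a copy of $\bA(\kappa)$.
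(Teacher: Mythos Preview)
Your proof is correct and takes a genuinely different route from the paper's. The paper first reduces (via a minimality argument) to the case where $K$ has height $n+1$, $K^{(n)}=\{p\}$, and every $x\neq p$ has a clopen neighbourhood $U_x$ with $|U_x|<\lambda$; it then sets $k=\max\{i:|K^{(i)}|\ge\lambda\}$ and shows directly that
\[A=K^{(k)}\setminus\Bigl(\bigcup\{U_x:x\in K^{(k+1)}\setminus\{p\}\}\cup\{p\}\Bigr)\]
is discrete of size $\ge\lambda$ with $p$ as its unique accumulation point, so that any $B\subseteq A$ of size $\lambda$ yields $B\cup\{p\}\cong\bA(\lambda)$. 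Your argument instead runs an induction on the height, with a nested case split (is $|K^{(1)}|\ge\lambda$? is $|K\setminus R|\ge\lambda$?) that in every branch either recurses at height $n-1$ or lands on a clopen $D_{q_0}$ with $D_{q_0}^{(1)}=\{q_0\}$ and $|D_{q_0}|\ge\lambda$. The paper's approach is shorter and pinpoints the copy of $\bA(\lambda)$ in one shot at a specific Cantor--Bendixson level; yours avoids the preliminary minimality reduction and is arguably more modular, at the cost of the extra case-split bookkeeping. Both exploit regularity of $\lambda$ at essentially the same places.
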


\begin{proof} The case $\lambda=\omega$ is an easy consequence of the fact that each infinite scattered compact space $K$ contains a nontrivial convergent sequence. Therefore we can assume that $\lambda>\omega$.
Let $n+1$ be the height of $K$. Using the same argument as at beginning of the proof of Proposition \ref{scattered:1}, without loss of generality, we can assume that $K^{(n)}=\{p\}$ and every $x\in K\setminus \{p\}$ has a clopen neighborhood $U_x$ in $K$ of size $<\lambda$. Let $k=\max\{i: |K^{(i)}|\ge \lambda\}$. Consider
\[ A= K^{(k)}\setminus\left(\bigcup\{U_x: x \in  K^{(k+1)}\setminus \{p\}\}\cup\{p\}\right)\,.\]
Observe that by our choice of $k$, the set $A$ has cardinality $\ge\lambda$. One can easily verify that the set $A$ is discrete and $p$ is the unique accumulation point of $A$. Therefore, for any subset $B\subseteq A$ of cardinality $\lambda$, $L=B\cup \{p\}$ is a one point compactification of a discrete space of required cardinality.
\end{proof}

\begin{proof}[Proof of Theorem \ref{scattered:1}]
Let us write $\kappa= \mathrm{cf}(\con)$.
Let $K$ be a scattered compact space of finite height and cardinality $\ge \kappa$, and let
$L$ be a retract of $K$ such that $\con\ge |L| \ge \kappa$, given by Proposition \ref{scattered:2}.  By Lemma \ref{prel:0}, it is enough to check that $\ext(C(L), c_0)\neq 0$.

Take a copy $S$ in $L$ of $\bA(\kappa)$, given by
Lemma \ref{scattered:3}.
Since $L$ is scattered, every measure on $L$ is purely atomic, and so  $|M(L)|= \con$.
If we supposed that $\ext(C(L), c_0)=0$ then  Corollary  \ref{when:2}(\ref{5}) would say  that $|M(L)|\ge 2^\kappa$, that
is $2^{{\rm cf}(\con)}\le\con$, which is in  contradiction with  K\"onig's Lemma.
\end{proof}

Example \ref{scattered:4} given in the appendix  demonstrates that we cannot replace the assumption on finite height of $K$ in Lemma \ref{scattered:2} by the assumption that $K$ has countable height, cf.\ also \cite[Lemma 9.5]{MP18}.

Note, however, that Corollary \ref{when:4} yields the following.

\begin{corollary}\label{scattered:5}
	If $\mathfrak{c}<2^{\omega_1}$
then $\ext(C(K),c_0)\neq 0$ for every scattered compact space $K$  of size $\omega_1$.
\end{corollary}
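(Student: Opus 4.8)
The plan is to apply Corollary~\ref{when:4} to the Banach space $Y=C(K)$, so it suffices to verify its two hypotheses: that $C(K)$ has density $\omega_1$, and that $|C(K)^\ast|<2^{\omega_1}$.

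For the density, recall that $\dens(C(K))=w(K)$ for every infinite compact space. First, $w(K)\ge\omega_1$: a compact space of countable weight is metrizable, and a scattered metrizable compact space is countable, so $w(K)=\omega$ would contradict $|K|=\omega_1$. For the reverse inequality I will invoke the standard fact that $w(K)=|K|$ for every infinite scattered compact space $K$; this follows, for instance, from the fact that such a $K$ is zero-dimensional and its algebra $\mathcal B$ of clopen sets is superatomic, so that $|\mathcal B|$ equals both $w(K)$ and $|K|$. Hence $w(K)=|K|=\omega_1$ and $\dens(C(K))=\omega_1$.

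For the dual, recall (as already used in the proof of Theorem~\ref{scattered:1}) that a measure on a scattered compact space is purely atomic; consequently $C(K)^\ast=M(K)$ may be identified with $\ell_1(K)$, and since $|K|=\omega_1\le\con$ this gives $|C(K)^\ast|=\con$. The assumption $\con<2^{\omega_1}$ then yields $|C(K)^\ast|=\con<2^{\omega_1}$, so Corollary~\ref{when:4} applies and we conclude that $\ext(C(K),c_0)\neq 0$.

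There is essentially no further obstacle here: the substance of the argument has already been absorbed into Corollary~\ref{when:4} and, through it, into Lemma~\ref{when:3}. The only points needing minor care are the two elementary facts that $w(K)=|K|$ for scattered compacta and that $|\ell_1(K)|=\con$ when $|K|\le\con$.
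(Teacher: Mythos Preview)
Your proposal is correct and matches the paper's approach exactly: the paper gives no explicit proof, merely remarking that the result ``follows from Corollary~\ref{when:4}'', and you have correctly supplied the two verifications needed for that corollary---that $\dens(C(K))=w(K)=|K|=\omega_1$ (via the standard fact on superatomic clopen algebras) and that $|C(K)^\ast|=|\ell_1(K)|=\con$ because all measures on a scattered compactum are atomic.
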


\section{ Almost disjoint families} \label{ad}

We consider here almost disjoint families $\cA$ of subsets of $\omega$.
Given such a family $\cA$ and $S\sub\omega$, we write $\cA\leq S$ to denote that  $A\sub^* S$ for every $A\in\cA$.

\begin{definition}\label{ad:1}
Families $\cA_1,\ldots, \cA_n$ of subsets of $\omega$ are said to be separated if there are $S_1,\ldots, S_n\sub\omega$ such
that $\bigcap_{i=1}^n S_i=\emptyset$ and $\cA_i\leq S_i$ and every $i\le n$.
\end{definition}

\newcommand{\ad}{\mathfrak a}

\begin{definition}\label{ad:2}
For a natural number $n\ge 2$  we denote by   $\ad_n$  the minimal cardinality of an almost disjoint family $\cA$ which
can be divided into pairwise disjoint parts $\cA_1,\ldots, \cA_n$ that are not separated.
We also write $\ad_\omega=\sup_n\ad_n$.
\end{definition}

Families $\mathcal{A}$ as above exist for every $n$, see \cite{AT11}, so $\mathfrak{a}_\omega\leq \mathfrak{c}$. By the classical Luzin construction there is  an almost disjoint family $\cA$ of size $\omega_1$ such that no two uncountable subfamilies of
$\cA$ are separated, see e.g.\ \cite[3.1]{Hr14}. In particular, $\ad_2=\omega_1$ and we have
\[\omega_1=\ad_2\le \ad_3\le\ldots \le\ad_\omega\le\con.\]
One  can conclude from a result due to Avil\'es and Todorcevic  \cite[Theorem 24]{AT11} that
\[ \omega_1=\ad_2< \ad_3 <\ldots \]
is relatively consistent. Moreover,  Martin's axiom implies that $\ad_3=\con$, see \cite[Section 6]{AT11}.

\begin{lemma}\label{ad:3}
The cardinal number $\ad_\omega$ is the minimal size of an almost disjoint family $\cA$ that can be written as
a disjoint union $\cA=\bigcup_{k=1}^\infty \cA_k$ where $\cA_1,\ldots \cA_n$ are not separated for every $n$.
\end{lemma}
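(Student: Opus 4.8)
The statement to prove is Lemma \ref{ad:3}: that $\ad_\omega$ equals the minimal size $\lambda$ of an almost disjoint family $\cA$ that splits as a disjoint union $\cA = \bigcup_{k=1}^\infty \cA_k$ with $\cA_1,\ldots,\cA_n$ not separated for every $n$. The plan is to prove the two inequalities $\lambda \le \ad_\omega$ and $\ad_\omega \le \lambda$ separately. The inequality $\lambda \le \ad_\omega$ is the one requiring a construction: I will take, for each $n\ge 2$, an almost disjoint family witnessing $\ad_n$ and amalgamate them on disjoint copies of $\omega$ inside a single copy of $\omega$; the inequality $\ad_\omega \le \lambda$ is essentially immediate from the definitions.

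First I would handle $\ad_\omega \le \lambda$. Suppose $\cA = \bigcup_{k\ge 1}\cA_k$ is an almost disjoint family of size $\lambda$ as in the statement. Fix $n\ge 2$. Group the pieces into $n$ blocks: let $\cB_i = \cA_i$ for $i < n$ and $\cB_n = \bigcup_{k\ge n}\cA_k$, so that $\cB_1,\ldots,\cB_n$ is a partition of $\cA$ into pairwise disjoint parts. I claim $\cB_1,\ldots,\cB_n$ are not separated: if sets $S_1,\ldots,S_n\sub\omega$ had empty intersection with $\cB_i \le S_i$ for each $i$, then in particular $\cA_i \le S_i$ for $i<n$ and $\cA_n \le \cB_n \le S_n$, contradicting that $\cA_1,\ldots,\cA_n$ are not separated (note separation only gets harder for larger families, so $\cB_n \le S_n$ already forces $\cA_n \le S_n$). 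Hence $\ad_n \le |\cA| = \lambda$ for every $n$, and taking the supremum gives $\ad_\omega \le \lambda$.

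For the reverse inequality $\lambda \le \ad_\omega$, the idea is to realize, simultaneously on one countable set, witnessing families for all the $\ad_n$. Fix a partition $\omega = \bigsqcup_{n\ge 2} I_n$ into infinite sets, together with bijections $\theta_n : I_n \to \omega$. For each $n\ge 2$ choose an almost disjoint family $\cC^{(n)}$ on $\omega$ of size $\ad_n$ which is partitioned into pairwise disjoint parts $\cC^{(n)}_1,\ldots,\cC^{(n)}_n$ that are not separated. Transport $\cC^{(n)}$ to $I_n$ via $\theta_n^{-1}$, obtaining an almost disjoint family on $I_n$ of size $\ad_n$; since the $I_n$ are pairwise disjoint, the union $\cA := \bigcup_{n\ge 2}\theta_n^{-1}[\cC^{(n)}]$ is an almost disjoint family on $\omega$ of size $\sup_n \ad_n = \ad_\omega$. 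I must now exhibit the required decomposition $\cA = \bigcup_{k\ge 1}\cA_k$. Set $\cA_k := \bigcup_{n \ge \max(k,2)} \theta_n^{-1}[\cC^{(n)}_k]$ for each $k\ge 1$ (so $\cC^{(n)}_k$ only contributes when $k\le n$); these are pairwise disjoint and their union is all of $\cA$. To see $\cA_1,\ldots,\cA_n$ are not separated, suppose $S_1,\ldots,S_n$ witness separation. Restricting to the block $I_n$ and pulling back through $\theta_n$, the sets $\theta_n[S_i \cap I_n]$ would have empty intersection and would witness separation of $\cC^{(n)}_1,\ldots,\cC^{(n)}_n$ (using that $\cA_i$ contains $\theta_n^{-1}[\cC^{(n)}_i]$, hence $\cC^{(n)}_i \le \theta_n[S_i\cap I_n]$ up to finite sets, and that a finite modification of the $S_i$ does not affect $\bigcap S_i = \emptyset$ being achievable — more carefully, one just intersects with $I_n$ and transports, and $\bigcap \theta_n[S_i\cap I_n] \sub \theta_n[\bigcap S_i \cap I_n] = \emptyset$). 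This contradicts the choice of $\cC^{(n)}$. Hence $\cA$ is a witnessing family of size $\ad_\omega$, so $\lambda \le \ad_\omega$.

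The only place where any care is needed — and the step I would flag as the main (minor) obstacle — is the bookkeeping in the last paragraph: making sure the reindexing of the pieces $\cC^{(n)}_k$ into the global pieces $\cA_k$ is done so that each $\cA_k$ is genuinely a \emph{disjoint} piece, so that $\cA_1,\ldots,\cA_n$ really do decompose the relevant part, and so that the "not separated" property survives restriction to a single block $I_n$ (this last point is where one uses that separating sets restrict and transport correctly, and that almost-inclusion $\sub^*$ is preserved under the bijections $\theta_n$). None of this is deep, but it is the part of the argument where an off-by-one in the indexing, or forgetting that $\cC^{(n)}_k$ is only defined for $k\le n$, would break the proof; everything else is direct unwinding of Definitions \ref{ad:1} and \ref{ad:2}.
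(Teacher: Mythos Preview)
Your proof is correct and follows essentially the same route as the paper's: partition $\omega$ into infinite blocks indexed by $n\ge 2$, place a witness for $\ad_n$ on the $n$th block, and collect the $k$th pieces across blocks into the global $\cA_k$. The paper leaves the inequality $\ad_\omega\le\lambda$ implicit and states the non-separation of the amalgamated pieces without proof, whereas you spell out both; your regrouping $\cB_n=\bigcup_{k\ge n}\cA_k$ for the easy direction is a harmless detour (taking $\cA_1\cup\ldots\cup\cA_n$ directly already witnesses $\ad_n\le\lambda$), but everything is sound.
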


\begin{proof}
Divide $\omega$ into infinite disjoint parts $T_2,T_3,\ldots$. Let $\cB_k$ be an almost disjoint family of subsets of $T_k$
with $|\cB_k|=\ad_k$, and such that $\cB_k$ can be divided into disjoint nonseparable parts $\cB_{k,1},\ldots, \cB_{k,k}$. Take $\cA=\bigcup_{k=2}^\infty \cB_{k}$; then $\cA$ is almost disjoint  and it can be divided into $\cA_1=\bigcup_{k\ge 2} \cB_{k,1}$ and $\cA_n=\bigcup_{k\ge n} \cB_{k,n}$, for $n\ge2$, that are as required.
\end{proof}

We shall prove that $\ad_\omega$ is bounded from above by cardinal coefficients of some classical $\sigma$-ideals.
Let $\cN$ denotes the family of  $\lambda$-null subsets of $2^\omega$, where  $\lambda$ is  the standard product measure
and $\cM$ denotes the $\sigma$-ideal of meager sets in $2^\omega$.
We also consider the $\sigma$-ideal $\cE$ of subsets of $2^\omega$ that can be covered by a countable number of closed sets of measure zero.

Recall that if $\cI$ is a proper $\sigma$-ideal of subsets of the Cantor set $2^\omega$ then
\[  {\rm non}(\cI)=\min\{ |X|: X\notin \cI\}.\]

Cardinal coefficients of $\cE$ are discussed by Bartoszy\'nski and Shelah \cite{BS92}. Clearly,  $\cE\sub \cN\cap\cM$ so
\[ {\rm non}(\cE)\le \min\left( {\rm non}(\cN), {\rm non}(\cM)\right);\]
the strong inequality in the above formula is relatively consistent \cite{BS92}.

Recall that cardinal coefficients of the classical $\sigma$-ideals do not change if we replace $2^\omega$ by any uncountable Polish space (and $\lambda$ by any
nonatomic Borel measure on it), cf.\ \cite{Ba10} and \cite{Fr5}.

The following lemma builds on a result due to Avil\'es and Todorcevic \cite[Theorem 6]{AT11}.

\begin{lemma}\label{ad:4}
 $\ad_\omega\le {\rm non}(\cE)$.
\end{lemma}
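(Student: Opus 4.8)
The statement to prove is $\ad_\omega\le{\rm non}(\cE)$, and by Lemma \ref{ad:3} it suffices to produce, on the index set $2^\omega$ (or any fixed Polish space carrying a nonatomic Borel measure), a single almost disjoint family of size ${\rm non}(\cE)$ which splits into countably many pieces $\cA_1,\cA_2,\dots$ such that $\cA_1,\dots,\cA_n$ are never separated, for every $n$. The natural strategy is to fix a witness $X\subset 2^\omega$ with $|X|={\rm non}(\cE)$, i.e.\ a set not coverable by countably many closed null sets, and to build the AD family out of $X$ by encoding each point $x\in X$ as a branch through a tree, using a fixed surjection $\omega\to$ (the basic clopen sets of $2^\omega$, or the nodes of $2^{<\omega}$). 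Concretely, I would identify $\omega$ with $2^{<\omega}$ and, to a point $x\in 2^\omega$, associate the set $B_x=\{x|k:k\in\omega\}\subset 2^{<\omega}=\omega$, the branch determined by $x$; distinct branches meet in a finite initial segment, so $\{B_x:x\in X\}$ is almost disjoint of size $|X|={\rm non}(\cE)$.

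\textbf{The splitting and why it cannot be separated.} To get the countably many pieces, I would not split $X$ itself but rather refine the coding: following the cited Avil\'es--Todorcevic argument \cite[Theorem 6]{AT11}, one wants the $n$-th family $\cA_n$ to consist of branches sampled along a fixed partition of the levels of the tree, so that any candidate separating sets $S_1,\dots,S_n\subset\omega$ with $\cA_i\le^* S_i$ translate into closed subsets of $2^\omega$ whose union must, by the non-separation hypothesis $\bigcap S_i=\emptyset$, cover a tail of every branch $B_x$, hence cover $X$ modulo a small (countable, or null-closed) set — contradicting ${\rm non}(\cE)$. The key translation step is: a set $S\subset\omega=2^{<\omega}$ with $B_x\subset^* S$ for all $x$ in some subfamily forces, for each such $x$, that $x|k\in S$ for all large $k$; the set of $x$ with $x|k\in S$ eventually is exactly $\bigcup_m\bigcap_{k\ge m}\{x:x|k\in S\}$, an $F_\sigma$ set, and its complement (where infinitely many initial segments avoid $S$) is a $G_\delta$; when $\bigcap_{i\le n}S_i=\emptyset$ one shows the corresponding "bad" $G_\delta$ sets are in fact \emph{closed and null} (this is where the $\cE$-ideal, rather than just $\cM$ or $\cN$, is forced on us: the relevant sets are countable unions of closed null pieces), and their union must then miss $X$, so $X\in\cE$, a contradiction. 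So the non-separation of $\cA_1,\dots,\cA_n$ is exactly equivalent to $X\notin\cE$.

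\textbf{Main obstacle.} The delicate point is arranging the tree-coding so that the sets arising from separating families $S_1,\dots,S_n$ are genuinely members of $\cE$ (countable unions of \emph{closed} null sets) and not merely meager or null — this is what makes ${\rm non}(\cE)$, rather than $\min({\rm non}(\cM),{\rm non}(\cN))$, the correct upper bound, and it is presumably the content that \cite[Theorem 6]{AT11} supplies. I expect one must choose, for each $n$, the levels assigned to $\cA_n$ to be sparse and fast-growing (so that "avoiding $S_i$ along $\cA_i$'s levels infinitely often" defines a set with small closure), and check a measure estimate showing each closed piece is null; the combinatorics of making this work simultaneously for all $n$ while keeping the whole family of size $|X|$ is the real work. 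Once that machinery is in place, the implication "$\cA_1,\dots,\cA_n$ separated $\Rightarrow X\in\cE$" is immediate, so by Lemma \ref{ad:3} the family $\cA=\bigcup_n\cA_n$ witnesses $\ad_\omega\le|X|={\rm non}(\cE)$, completing the proof.
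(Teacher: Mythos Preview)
Your high-level strategy is right --- encode a witness $X\notin\cE$ as an almost disjoint family of branches through a tree and show that separating the pieces would force $X\in\cE$ --- but the specific coding you propose does not work, and the missing ingredient is exactly what makes the argument go through.

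You work in the binary tree $2^{<\omega}$, assign to each $x\in X$ its full branch $B_x=\{x|k:k\in\omega\}$, and then propose to obtain the pieces $\cA_1,\cA_2,\dots$ by restricting to a partition of the \emph{levels}. The problem is that the hypothesis $\bigcap_{i\le n}S_i=\emptyset$ gives you no leverage against a level partition: knowing that $x|k\in S_i$ for all large $k$ in the $i$-th block of levels tells you nothing about whether a \emph{single} node lies in all the $S_i$ simultaneously, because the blocks are disjoint. There is no mechanism here to convert $\bigcap S_i=\emptyset$ into a measure-zero condition on the closed sets $\bigcap_{k\ge m}\{x:x|k\in S_i\text{ for the relevant }i\}$; those sets can perfectly well have full measure.

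The paper's proof fixes $n$ and works instead in the $n$-ary tree $n^{<\omega}$ over $K=n^\omega$, splitting each branch by \emph{direction}: $B_i(x)=\{\sigma:\sigma^\frown i\prec x\}$ is the set of nodes at which the branch $x$ next turns toward $i$. If $\cA_i\le^* S_i$, then the set of $x$ with $B_i(x)\setminus T_{k_0}\subset S_i$ for all $i$ is closed, and since $X\notin\cE$ some such set $F$ has positive measure. The crucial step is the Lebesgue density theorem: at a density point $x\in F$, for all large $k$ every child cylinder $[x|k^\frown i]$ meets $F$, and a witness $y_i\in[x|k^\frown i]\cap F$ satisfies $y_i|k=x|k\in S_i$ (because $y_i$ turns toward $i$ at level $k$). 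Hence $x|k\in\bigcap_{i<n}S_i$, contradicting $\bigcap S_i=\emptyset$. The $n$-ary branching is precisely what lets a single node be simultaneously certified in all $n$ sets $S_i$ via its $n$ children; your binary-tree-plus-level-partition scheme has no analogue of this, and your sketch explicitly defers this point to \cite{AT11} without supplying a substitute. This is the real gap.
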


\begin{proof}
Let us fix $n\ge 2$; we shall prove that $\ad_n\le {\rm non}(\cE)$. We may think that $\cE$ is the $\sigma$-ideal of
subsets of the space $K=n^\omega$ (where $n=\{0,1,\ldots, n-1\}$) and $\lambda$ is the standard product measure on $K$, that is
$\lambda(\{x\in K: x_k=i\})=1/n$ for every $k$ and $i<n$.

We consider the full $n$-adic tree $T=n^{<\omega}=\bigcup_{k=0}^\infty T_k$, where $T_k=\{0,1,\ldots, n-1\}^k$,
using the standard notation; in particular,
$\sigma_1\prec \sigma_2$ means that $\sigma_2$ extends $\sigma_1$,
$x|k\in T_k$ denotes the restriction of  $x\in K$, $[\sigma]=\{x\in K: x|k=\sigma\}$ for $\sigma\in T_k$, and if $\sigma = (\sigma_0,\ldots,\sigma_{n-1})$, then $\sigma^\frown i = (\sigma_0,\ldots,\sigma_{n-1},i)$.

Fix a set $X\sub K$   such that $X\notin \cE$ and $|X|={\rm non}(\cE)$.
We define for every $i<n$ a family $\cA_i$ of subsets of the  tree $T$ as follows.
For $x\in K$ we put
\[ B_i(x)=\{\sigma\in T: \sigma^\frown i\prec x\}, \mbox{ and }  \cA_i=\{B_i(x):x\in X,\ B_i(x)\text{ infinite}\}.\]
We shall check that $\cA=\bigcup_{i=0}^{n-1}\cA_i$ is an almost disjoint family on $T$  and $\cA_0,\ldots, \cA_{n-1}$ are not separated.

Clearly $B_i(x)\cap B_j(x)=\emptyset$ whenever $i\neq j$. If we take $x\neq y$ and any $i,j\le n-1$ then
there is $k$ such that $x(k)\neq y(k)$ and then $B_i(x)\cap B_j(y)$ contains only sequences of length $\le k$
so such an intersection is finite. Hence $\cA$ is an almost disjoint family.
Clearly,  $\cA_i$ are pairwise disjoint so it remains to check that
$\cA_0,\ldots, \cA_{n-1}$ are not separated.

For any set  $S\sub T$ consider the sets
\[ H^i(S)=\{x\in K: B_i(x)\sub ^* S \}, \quad  H^i_k(S)=\{x\in K: B_i(x)\sm T_k \sub S  \}.\]
Note that $H^i_k(S)$ is a closed subset of $K$ and $H^i(S)=\bigcup_k H^i_k(S)$.

Take  any sets $S_i\sub T$ such that $\cA_i\leq S_i$ for $i<n$. Then $X\sub H^i(S_i)$ and $H^i_k(S_i)\subset H^i_{k+1}(S_i)$, for every $i<n, k\in\omega$, so
\[ X\sub \bigcap_{i<n}H^i(S_i)= \bigcap_{i<n} \bigcup_{k<\omega} H^i_k(S_i)= \bigcup_{k<\omega} \bigcap_{i<n}H^i_k(S_i).\]
Since $X\notin \cE$,   there must be  $k_0$ such that the set $F=\bigcap_{i<n} H^i_{k_0}(S_i)$ has positive measure.
To finish the proof we use the following.
\medskip

\noindent {\sc Claim.} If $\lambda(F)>0$  then there is $x\in F$ and $k_1$ such that
$[x|k^\frown i]\cap F\neq\emptyset$ for all $k\ge k_1$ and all $i<n$.
\medskip

The claim  follows from the Lebesgue density theorem which, in particular says that there is $x\in F$ such that
\[\lim_{k\lra\infty} \frac{\lambda( [x|k]\cap F)]}{\lambda([x|k])}=1.\]
Then for some $k_1$ and all $k\ge k_1$ we have
\[ \lambda( [x|k]\cap F)>(1-1/n)\cdot \lambda([x|k]),\]
which implies $[x|k^\frown i]\cap F\neq\emptyset$ since $\lambda([x|k^\frown i])=(1/n)\lambda([x|k])$.
\medskip

Now if we take $x\in F$ as in {\sc Claim} then for $k\ge \max(k_0, k_1)$ we have  $x|k\in S_i$ for every $i<n$ which means
that $S_0\cap S_1\cap\ldots\cap S_{n-1}$ is nonempty, and this is what we needed to check.
\end{proof}

It is now clear that $ \ad_\omega\le \min\left( {\rm non}(\cN), {\rm non}(\cM)\right)$; in particular,
it is relatively consistent that $\ad_\omega=\omega_1<\con$.

\section{Twisted sums from almost disjoint families }\label{tsad}

Let us say that a subset $Q$ of a compact space is an Aleksandrov set if $Q$ is discrete and $\overline{Q}$ is the one pont compactification of
$Q$.
The role of almost disjoint families considered in the previous section  is connected with the following fact linking two notions of separation,
those of Definition \ref{ch:1} and Definition \ref{ad:1}.

\begin{lemma}\label{tsad:1}
If a compact space $K$ contains an Aleksandrov set $Q$ and $Q=Q_1\cup\ldots Q_n$, where $|Q_i|\ge \ad_n$ for every $i$ then there is a countable discrete extension
of $K$ in which $Q_i$ cannot be separated.
\end{lemma}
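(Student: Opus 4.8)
The goal is to extract from an Aleksandrov set $Q$ inside $K$, split into finitely many pieces $Q_i$ each of size $\ge\ad_n$, a countable discrete extension of $K$ in which the pieces cannot be separated. The natural strategy is to first pass, inside each $Q_i$, to a subfamily realizing the combinatorial obstruction to separation on $\omega$, then transplant that obstruction to $K$ by attaching a copy of $\omega$ that accumulates suitably.

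\textbf{Step 1: Choose the almost disjoint families.} By Definition \ref{ad:2}, since $|Q_i|\ge\ad_n$ for each $i$, we may fix an almost disjoint family $\cA$ on $\omega$, of size $\ad_n$, partitioned into pairwise disjoint pieces $\cA_1,\dots,\cA_n$ that are \emph{not} separated (in the sense of Definition \ref{ad:1}). Enumerate $\cA_i=\{A^i_\xi:\xi<|Q_i|\}$ possibly with repetitions (or just injectively since $|\cA_i|\le\ad_n\le|Q_i|$), and index $Q_i=\{q^i_\xi:\xi<|Q_i|\}$ so that we have a bijection between $\cA$ and a subset $Q'=\bigcup_i Q'_i\subseteq Q$ with $Q'_i\subseteq Q_i$; write $q(A)$ for the point of $Q'$ corresponding to $A\in\cA$. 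Since $\overline{Q}$ is the one-point compactification of $Q$, the closure $\overline{Q'}$ is $Q'\cup\{\infty\}$ for a single point $\infty$, and every infinite subset of $Q'$ has $\infty$ as its unique accumulation point.

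\textbf{Step 2: Build the discrete extension.} Define $L=K\cup\omega$, where the points of $\omega$ are declared isolated, and a basic neighbourhood of a point $x\in K$ is $U\cup\{m\in\omega: q(A_m)\in U\text{ and }m\in A\text{ for the }A\text{ with }q(A)=\dots\}$ — more precisely, mimic the Aleksandrov--Urysohn construction: fix a bijection between $\omega$ (the new points) and $\omega$ (the index set of $\cA$), and for $x\in K$ let basic neighbourhoods be $U\cup\{n\in\omega: q(A_{k(n)})\in U\}$ where $\{A_k\}$ is the re-enumeration of $\cA$ and, for $n$, $k(n)$ is chosen so that $n\in A_{k(n)}$ — this is exactly the topology of $\bA\bU(\cA)$ glued onto $K$ along the map $q$. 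One checks $L$ is compact Hausdorff (the only new verification beyond $\bA\bU(\cA)$ being compact is that the map sending $\bA\bU(\cA)$'s non-isolated points into $K$ via $q$ is continuous, which holds because $\infty\mapsto\infty$ and $q$ restricted to the $A$'s is just the inclusion, composed with convergence), and that $L\setminus K=\omega$ is a countable discrete set, so $L$ is a countable discrete extension of $K$.

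\textbf{Step 3: Non-separation transfers.} Suppose toward a contradiction that $Q_1,\dots,Q_n$ \emph{are} separated in $L$: pick open $U_i\subseteq L$ with $Q_i\subseteq U_i$ and $\bigcap_i U_i=\emptyset$. Put $S_i=\{m\in\omega: m\in U_i\}\subseteq\omega$ (the new isolated points lying in $U_i$). For each $A\in\cA_i$ we have $q(A)\in Q'_i\subseteq Q_i\subseteq U_i$, and since $U_i$ is open and contains $q(A)$, by the definition of the topology $U_i$ must contain a co-finite subset of the new points indexed inside $A$; that is, $A\subseteq^* S_i$, so $\cA_i\le S_i$. Moreover $\bigcap_i S_i\subseteq\bigcap_i U_i=\emptyset$. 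Hence $S_1,\dots,S_n$ witness that $\cA_1,\dots,\cA_n$ are separated, contradicting the choice of $\cA$. Therefore $Q_1,\dots,Q_n$ cannot be separated in $L$, completing the proof.

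\textbf{Main obstacle.} The delicate point is Step 2/Step 3's interface: one must set up the topology on $L$ so that (a) $L$ is genuinely compact Hausdorff with $\omega$ discrete and no other identifications, and (b) an open set in $L$ containing $q(A)$ is forced to contain $A^*$-many new points, so that separation in $L$ really produces a separating family for $\cA$. Getting the neighbourhood basis of the points $q(A)$ (and of the shared accumulation point $\infty$) exactly right — essentially recognizing $L$ as the pushout of $\bA\bU(\cA)\hookleftarrow \{A\text{'s}\}\cup\{\infty\}\xrightarrow{q} K$ — is where the real content lies; once that is in place the combinatorial transfer in Step 3 is routine.
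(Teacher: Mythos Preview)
Your proof is correct and follows the same approach as the paper: form the countable discrete extension by gluing $\bA\bU(\cA)$ onto $K$ via an injection $\cA\to Q$ sending $\cA_i$ into $Q_i$, then observe that separating the $Q_i$ in $L$ would force a separation of the $\cA_i$ on $\omega$. Your explicit neighbourhood formula in Step~2 (``$k(n)$ chosen so that $n\in A_{k(n)}$'') is ill-defined, since a given $n$ may lie in several or in no $A\in\cA$; but this is a presentation wobble rather than a gap, because you immediately and correctly identify $L$ as the pushout of $\bA\bU(\cA)$ and $K$ along $\cA\cup\{\infty\}\cong\overline{Q'}$, which is exactly the paper's construction.
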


\begin{proof}
Take an almost disjoint family $\cA$ of size $\ad_n$ that is divided into disjoint pieces $\cA_1,\ldots, \cA_n$ that cannot be separated.
Let $\vf:\cA\lra Q$ be an injective map such that $\vf(A)\in Q_i$ whenever $A\in\cA_i$.
Consider the space $\bA\bU(\cA)=\omega\cup\cA\cup\{\infty\}$. We can form a countable discrete extension $K\cup\omega$  of $K$
identifying $A\in\bA\bU(\cA)$ with $\vf(A)\in K$; then $\infty$ is identified with the only cluster point of $\overline{Q}$. Take open sets $U_i\sub K\cup\omega$ such that $U_i\supset Q_i$ for every $i$.  Then $S_i=U_i\cap\omega$ satisfies $\cA_i\leq S_i$ so
$\bigcap_{i=1}^n U_i \supset \bigcap_{i=1}^n S_i\neq\emptyset$, and we are done.
\end{proof}

The next result offers a  generalization of Theorem 2.3 from \cite{CT16}.

\begin{proposition}\label{tsad:2}
Let $X$ be a Banach space and let $c_n,d_n>0$ be two sequences such that
\[ r_n=\frac{n\cdot d_n}{c_n}\lra\infty.\]

Suppose that for every $n$ there exist
$\Phi_1,\ldots,\Phi_n\subset B_{X^\ast}$ and $\Psi_1,\ldots \Psi_n \sub B_X$ such that
\begin{enumerate}[(i)]
	\item  $ \left\| \sum_{i=1}^n x_{i}\right\|\le c_n,$
	for any choice of vectors  $x_i\in\Psi_i$;
	\item for every $i$ and $x^\ast\in\Phi_i$ there is $x\in\Psi_i$ such that $x^\ast(x)>d_n$;
	\item The sets $\Phi_i$ are pairwise disjoint and $|\Phi_i|\ge\ad_n$;
	\item $\Phi=\Phi_1\cup\ldots\cup\Phi_n$ is an Aleksandrov set in $B_{X^\ast}$.
\end{enumerate}
Then there is a nontrivial twisted sum of $c_0$ and $X$.
\end{proposition}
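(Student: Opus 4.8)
The plan is to use the characterization of Theorem~\ref{main} (condition $(ii)$ or $(iii)$) by exhibiting a countable discrete extension of some weak$^\ast$-compact subset of $X^\ast$ that cannot be realized inside any bounded ball of $X^\ast$. The natural candidate is built from the Aleksandrov set $\Phi$ together with a fresh countable set of isolated points glued in the pattern of a non-separable almost disjoint family, exactly as in Lemma~\ref{tsad:1}. The obstruction to realizing this extension in a ball of radius $r$ will be quantitative: if the isolated points landed in $r\cdot B_{X^\ast}$, then pairing them against the vectors $\sum_{i=1}^n x_i$ from hypothesis $(i)$ (which have norm $\le c_n$) would force a contradiction with the growth $r_n = n d_n/c_n \to \infty$ coming from hypothesis $(ii)$. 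So the heart of the matter is to arrange the combinatorics so that separation of the $\Phi_i$'s fails in the extension while each new isolated functional is simultaneously large on all of $\Psi_1,\dots,\Psi_n$.

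\medskip

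\textbf{Step 1: Set up the extension.} Fix $n$ and, using Definition~\ref{ad:2}, take an almost disjoint family $\cA$ on $\omega$ of size $\ad_n$ partitioned into pairwise disjoint pieces $\cA_1,\dots,\cA_n$ that are not separated. By $(iii)$ we may fix an injection $\vf:\cA\lra\Phi$ with $\vf(A)\in\Phi_i$ for $A\in\cA_i$, and by $(ii)$ choose for each $A\in\cA$, writing $x^\ast_A=\vf(A)\in\Phi_i$, a vector $x_A\in\Psi_i$ with $x^\ast_A(x_A)>d_n$. Now define, for each $m\in\omega$, the functional
\[
z^\ast_m = \sum_{\{A\in\cA:\ m\in A\}} x^\ast_A.
\]
This sum is finite for each fixed $m$? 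No --- this is where care is needed: $m$ may lie in infinitely many members of $\cA$. So instead one should build the new isolated points as in the proof of Lemma~\ref{tsad:1}: form $\bA\bU(\cA)=\omega\cup\cA\cup\{\infty\}$, identify each $A\in\cA$ with $\vf(A)\in\Phi$ and $\infty$ with the unique cluster point of $\overline{\Phi}$, and this produces a countable discrete extension $L = K_0\cup\omega$ of the weak$^\ast$-compact set $K_0=\overline{\Phi}$ (or of $B_{X^\ast}$ itself, after passing to it as in the proof of $(ii)\Leftrightarrow(iii)$). The topology on $\omega\subset L$ is the one from $\bA\bU(\cA)$: basic neighborhoods of $\vf(A)$ meet $\omega$ in cofinite subsets of $A$.

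\medskip

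\textbf{Step 2: Rule out realization in a ball.} Suppose toward a contradiction that $\ext(X,c_0)=0$; by Theorem~\ref{main2}$(ii)$ there is a fixed $r>0$ such that every countable discrete extension of $B_{X^\ast}$ is realized inside $r\cdot B_{X^\ast}$. Pick $n$ large enough that $r_n = n d_n/c_n > 2r$ (possible since $r_n\to\infty$). Apply the hypothesis for that $n$ to build $L$ as in Step~1, and let $(y^\ast_m)_{m\in\omega}\subset r\cdot B_{X^\ast}$ be a realization: thus for each $A\in\cA_i$, the net $(y^\ast_m)_{m\in A}$ converges weak$^\ast$ to $\vf(A)=x^\ast_A$. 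Fix $i\in\{1,\dots,n\}$ and $A\in\cA_i$. Since $x^\ast_A(x_A)>d_n$ and $y^\ast_m\to x^\ast_A$ weak$^\ast$ along $A$, there is a cofinite $A'\subseteq A$ with $y^\ast_m(x_A)>d_n$ for all $m\in A'$; shrinking, we get cofinite $A_i'\subseteq A_i$ for each chosen representative. Now use non-separability: choose one $A^{(i)}\in\cA_i$ for each $i$ (we may even take all of $\cA_i$, but one suffices if we iterate). The sets $S_i=A^{(i)\prime}$ cannot witness separation, but more to the point: there must exist $m\in\omega$ lying in $A^{(1)\prime}\cap\cdots\cap A^{(n)\prime}$ --- this is exactly the failure of separation of $\cA_1,\dots,\cA_n$, provided we choose the $A^{(i)}$ carefully (this is the subtle point; one must invoke that the $\cA_i$ are not separated, applied to the sets $S_i = \bigcup\{A'\,:\,A\in\cA_i\text{ chosen}\}$ or argue that no choice of cofinite shrinkings separates them). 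For such an $m$, setting $x = x_{A^{(1)}} + \cdots + x_{A^{(n)}}$ (with $x_j\in\Psi_j$), hypothesis $(i)$ gives $\|x\|\le c_n$, while
\[
y^\ast_m(x) = \sum_{i=1}^n y^\ast_m(x_{A^{(i)}}) > n\,d_n,
\]
so $\|y^\ast_m\|\ge n d_n/c_n = r_n > 2r > r$, contradicting $y^\ast_m\in r\cdot B_{X^\ast}$.

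\medskip

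\textbf{The main obstacle} I anticipate is Step~2's use of non-separability: the definition of $\ad_n$ gives a \emph{fixed} partition $\cA_1,\dots,\cA_n$ that is not separated, but the realization $(y^\ast_m)$ forces us to pass to cofinite subsets $A'\subseteq A$, and we need the resulting families still not to be separated. Since $\cA_i\leq S_i$ depends only on each $A$ modulo finite sets, and each $A'$ is cofinite in $A$, this is automatic: $\cA_i'=\{A':A\in\cA_i\}$ satisfies $\cA_i'\leq S_i$ iff $\cA_i\leq S_i$, so $\cA_1,\dots,\cA_n$ not separated implies $\cA_1',\dots,\cA_n'$ not separated, hence $S_1\cap\cdots\cap S_n\neq\emptyset$ for $S_i=\bigcup\cA_i'$, which delivers the required $m$. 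Thus the contradiction in Step~2 holds for every $r$, no matter how large, so $\ext(X,c_0)\neq 0$. The only remaining bookkeeping is to make sure the countable discrete extension is genuinely a \emph{discrete} extension of $B_{X^\ast}$ (the points $y^\ast_m$ are distinct and isolated, and none lies in $K_0$) --- this is handled exactly as in the {\sc Remark} at the start of the proof of Theorem~\ref{main}, perturbing by a weak$^\ast$-null sequence of small norm if necessary, which affects the norm estimate by at most an additive constant and does not disturb $r_n\to\infty$.
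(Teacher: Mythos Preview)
Your proof is correct and follows essentially the same route as the paper. The paper's presentation is slightly more modular---it defines the weak$^\ast$-open sets $U_i=\{x^\ast\in X^\ast: x^\ast(x)>d_n\text{ for some }x\in\Psi_i\}\supseteq\Phi_i$, observes that $\bigcap_i U_i$ is disjoint from $r_n\cdot B_{X^\ast}$ (so the $\Phi_i$ are separated there in the sense of Definition~\ref{ch:1}), and then invokes Lemma~\ref{tsad:1} as a black box---whereas you unwind Lemma~\ref{tsad:1} and the non-separation of the $\cA_i$ by hand to locate the single $m$; the underlying argument is identical.
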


\begin{proof}
By  $(ii)$ the $weak^\ast$ open set
\[U_i=\{x^\ast\in X^\ast: x^*(x)>d_n \mbox{ for some } x\in  \Psi_i\},\]
contains $\Phi_i$.
If $x^\ast\in U_1\cap\ldots\cap U_n$ then there are $x_i\in\Psi_i$, such that
$x^\ast(x_i)>d_n$ for $i=1,\ldots, n$. Hence
\[  \|x^\ast\|\cdot c_n \ge \|x^\ast\|\cdot\|x_1+\ldots x_n\|\ge x^\ast(x_1+\ldots + x_n) > n\cdot d_n,\]
so $\|x^\ast\|>r_n$.
In other words, the sets $\Phi_i$ are separated in $r_n\cdot B_{X^\ast}$.

By Lemma \ref{tsad:1} there exists a countable discrete extension $L_n$ of $B_{X^\ast}$ in which $\Phi_i$ are not separated.
 The conclusion is that $L_n$ cannot be realized in $r_n\cdot B_{X^\ast}$. Hence, $\ext(X,c_0)\neq 0$ by Theorem \ref{main2}.
\end{proof}

\begin{theorem}\label{tsad:3}
Suppose that, for every $n$,  a compact space $K$ contains homeomorphic  copies   $F_1,\ldots, F_n$ of
 $\bA(\ad_n)$, such that if $z_i$ is the only cluster point of $F_i$ then
 there are pairwise disjoint open sets $V_i\sub K$ with
$V_i\supseteq F_i\sm\{z_i\}$ for all $i\le n$. Then $\ext(C(K),c_0)\neq 0$.
\end{theorem}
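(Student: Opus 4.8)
The plan is to obtain the theorem as a direct application of Proposition \ref{tsad:2} to the Banach space $X=C(K)$, with the constant sequences $c_n=1$ and $d_n=1/4$, so that $r_n=\tfrac{n\cdot d_n}{c_n}=n/4\to\infty$. Fix $n$ and let $F_1,\dots,F_n$, their cluster points $z_i$, and the pairwise disjoint open sets $V_i\supseteq F_i\setminus\{z_i\}$ be as in the hypothesis. Every point of $F_i\setminus\{z_i\}$ is isolated in $F_i$, so for each $x\in F_i\setminus\{z_i\}$ I can choose an open set $W_{x,i}\subseteq V_i$ with $W_{x,i}\cap F_i=\{x\}$, and then (by normality of $K$) a function $f_{x,i}\in C(K)$ with $0\le f_{x,i}\le 1$, $f_{x,i}(x)=1$ and $f_{x,i}\equiv 0$ off $W_{x,i}$. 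Note that $z_i\notin W_{x,i}$ (as $z_i\in F_i$ and $z_i\ne x$), so $f_{x,i}(z_i)=0$. I then put
\[\Phi_i=\Bigl\{\tfrac12(\delta_x-\delta_{z_i}):x\in F_i\setminus\{z_i\}\Bigr\}\subseteq B_{C(K)^\ast},\qquad \Psi_i=\{f_{x,i}:x\in F_i\setminus\{z_i\}\}\subseteq B_{C(K)}.\]

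Conditions $(i)$--$(iii)$ of Proposition \ref{tsad:2} should be routine. For $(i)$: every $f_i\in\Psi_i$ is supported in $V_i$, and the $V_i$ are pairwise disjoint, so at each point of $K$ at most one summand is nonzero and $\bigl\|\sum_{i=1}^n f_i\bigr\|_\infty\le 1=c_n$. For $(ii)$: given $\mu=\tfrac12(\delta_x-\delta_{z_i})\in\Phi_i$, the function $f_{x,i}\in\Psi_i$ satisfies $\mu(f_{x,i})=\tfrac12(f_{x,i}(x)-f_{x,i}(z_i))=\tfrac12>\tfrac14=d_n$. For $(iii)$: the map $x\mapsto\tfrac12(\delta_x-\delta_{z_i})$ is injective (compare Jordan decompositions), so $|\Phi_i|$ is the number of isolated points of $F_i\cong\bA(\ad_n)$, namely $\ad_n$; and for $i\ne j$ a common element of $\Phi_i$ and $\Phi_j$ would force a point lying in $V_i\cap V_j=\emptyset$, so the $\Phi_i$ are pairwise disjoint. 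Also each $\mu\in\Phi_i$ has norm $1$, so $\Phi_i\subseteq B_{C(K)^\ast}$.

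The substantial point is $(iv)$: that $\Phi=\Phi_1\cup\dots\cup\Phi_n$ is an Aleksandrov set in $(B_{C(K)^\ast},w^\ast)$. The assignment $x\mapsto\tfrac12(\delta_x-\delta_{z_i})$ extends to a $w^\ast$-continuous map $F_i\to B_{C(K)^\ast}$ carrying $z_i$ to $0$; since $F_i$ is compact and $z_i\in\overline{F_i\setminus\{z_i\}}$, this gives $\overline{\Phi_i}=\Phi_i\cup\{0\}$, hence $\overline{\Phi}=\Phi\cup\{0\}$ with $0\notin\Phi$. Discreteness of $\Phi$ I would read off from the functions $f_{x,i}$: for $\mu_0=\tfrac12(\delta_{x_0}-\delta_{z_{i_0}})\in\Phi_{i_0}$ the $w^\ast$-open set $\{\nu\in B_{C(K)^\ast}:\nu(f_{x_0,i_0})>\tfrac14\}$ contains $\mu_0$ but no other element of $\Phi$, because $f_{x_0,i_0}\ge 0$ is supported in $W_{x_0,i_0}\subseteq V_{i_0}$ and $W_{x_0,i_0}\cap F_{i_0}=\{x_0\}$, so $f_{x_0,i_0}$ vanishes at every $x\in F_i\setminus\{z_i\}$ with $(x,i)\ne(x_0,i_0)$. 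Finally, a compact Hausdorff space with a dense discrete subspace whose complement is a single point is the one-point compactification of that subspace, so $\overline{\Phi}$ is the one-point compactification of $\Phi$. With $(i)$--$(iv)$ established, Proposition \ref{tsad:2} produces a nontrivial twisted sum of $c_0$ and $C(K)$, i.e.\ $\ext(C(K),c_0)\ne 0$. The only delicate part is the bookkeeping keeping each support $W_{x,i}$ inside $V_i$ and away from the points $z_j$ — precisely what lets the norm bound $c_n=1$ and the discreteness of $\Phi$ hold at the same time.
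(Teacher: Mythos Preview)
Your proof is correct and follows essentially the same approach as the paper: the same sets $\Phi_i=\{\tfrac12(\delta_x-\delta_{z_i}):x\in F_i\setminus\{z_i\}\}$, the same Urysohn-type functions supported in neighborhoods inside $V_i$ meeting $F_i$ only at $x$, and the same appeal to Proposition~\ref{tsad:2} with $c_n=1$ (the paper takes $d_n=1/3$ rather than your $d_n=1/4$, which is immaterial). Your write-up is more explicit than the paper's in verifying conditions $(iii)$ and $(iv)$, but the argument is the same.
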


\begin{proof}
Consider
\[ \Phi_i=\{1/2( \delta_x-\delta_{z_i} ): x\in F_i, x\neq z_i\}.\]
Clearly, for $x\in F_i\sm\{z_i\}$ we can find
an open set $U\sub V_i\setminus\{z_i\}$ such that $U\cap F_i=\{x\}$, and a norm-one function $g_x\in C(K)$ vanishing outside $U$, with $g_x(x)=1$.
We can apply Proposition \ref{tsad:2} with $\Psi_i=\{g_x:x\in F_i\sm\{z_i\}\}$  and $c_n=1, d_n=1/3$.
\end{proof}

\begin{corollary}\label{tsad:4}
If a compact space $K$ contains $n$ many pairwise disjoint copies of $\bA(\ad_n)$ for every $n$ then  $\ext(C(K),c_0)\neq 0$.
\end{corollary}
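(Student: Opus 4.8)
The plan is to deduce Corollary \ref{tsad:4} directly from Theorem \ref{tsad:3}, since the corollary is essentially the special case of that theorem in which the disjointness of the base points is automatic and one only has to produce the open sets $V_i$ from the disjointness of the copies themselves. So the first step is to unwind the hypothesis: fix $n$ and let $F_1,\ldots,F_n\subset K$ be the given pairwise disjoint homeomorphic copies of $\bA(\ad_n)$, and for each $i$ let $z_i\in F_i$ be the unique cluster point of $F_i$ (this is well defined because $\bA(\ad_n)$ has exactly one non-isolated point). The points $z_1,\ldots,z_n$ are distinct, being members of the pairwise disjoint sets $F_i$, but in fact we do not even need that explicitly — what we need are pairwise disjoint open sets $V_i$ with $V_i\supseteq F_i\setminus\{z_i\}$.

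The key observation is that each $F_i\setminus\{z_i\}$ is a discrete \emph{closed} subset of the \emph{locally compact} space $K\setminus\{z_1,\ldots,z_n\}$: indeed $F_i\setminus\{z_i\}$ consists of isolated points of $F_i$, and its only accumulation point in $K$ is $z_i$ (by the definition of $\bA(\ad_n)$), so after removing all the $z_j$'s the set $F_i\setminus\{z_i\}$ has no accumulation points there and is closed in $K\setminus\{z_1,\ldots,z_n\}$. The sets $F_1\setminus\{z_1\},\ldots,F_n\setminus\{z_n\}$ are pairwise disjoint closed subsets of the normal (indeed compact Hausdorff minus finitely many points, hence Tychonoff and normal) space $K\setminus\{z_1,\ldots,z_n\}$, so by normality we can separate them by pairwise disjoint open subsets of $K\setminus\{z_1,\ldots,z_n\}$, which are also open in $K$. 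This gives exactly the $V_i$ required in the hypothesis of Theorem \ref{tsad:3}.

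Once the $V_i$ are in hand, the hypothesis of Theorem \ref{tsad:3} is verified for every $n$ (using the same given copies $F_i$ of $\bA(\ad_n)$), and the conclusion $\ext(C(K),c_0)\neq 0$ follows immediately. I expect the only point requiring a little care to be the normality/separation step: one should make sure that removing the finite set $\{z_1,\ldots,z_n\}$ from $K$ still leaves a space in which finitely many pairwise disjoint closed sets can be separated by pairwise disjoint open sets. This holds because an open subspace of a compact Hausdorff space is locally compact Hausdorff, hence Tychonoff, and — more to the point — because any open subspace of a normal space need not be normal in general, but a \emph{cofinite} subspace of a compact Hausdorff space \emph{is} normal (it is even hereditarily normal when $K$ is metrizable, but in general one uses that $K\setminus\{z_1,\ldots,z_n\}$ is locally compact Hausdorff and $\sigma$-... no): the cleanest route is simply to separate the sets inside $K$ itself. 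Namely, first shrink: for each isolated point $x\in F_i\setminus\{z_i\}$ pick a clopen-in-$F_i$ neighbourhood and a genuine open set, then observe that $\overline{F_i\setminus\{z_i\}}=F_i$ in $K$, so $\overline{F_i\setminus\{z_i\}}\cap\overline{F_j\setminus\{z_j\}}=F_i\cap F_j=\emptyset$ for $i\neq j$; thus the closures of the $F_i\setminus\{z_i\}$ are pairwise disjoint closed subsets of the normal space $K$, and normality of $K$ yields pairwise disjoint open $V_i\supseteq F_i\supseteq F_i\setminus\{z_i\}$. This is the real heart of the argument, and it is short; everything else is just quoting Theorem \ref{tsad:3}.

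\begin{proof}
Fix $n\ge 2$ and let $F_1,\ldots,F_n\sub K$ be pairwise disjoint homeomorphic copies of $\bA(\ad_n)$. For each $i\le n$ let $z_i$ be the unique cluster point of $F_i$; then $z_1,\ldots,z_n$ are pairwise distinct and, since the isolated points of $F_i$ accumulate only at $z_i$, we have $\overline{F_i\sm\{z_i\}}=F_i$ (closure in $K$). Consequently, for $i\neq j$,
\[ \overline{F_i\sm\{z_i\}}\cap\overline{F_j\sm\{z_j\}}=F_i\cap F_j=\emptyset, \]
so $\overline{F_1\sm\{z_1\}},\ldots,\overline{F_n\sm\{z_n\}}$ are pairwise disjoint closed subsets of the compact (hence normal) space $K$. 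By normality there are pairwise disjoint open sets $V_1,\ldots,V_n\sub K$ with $V_i\supseteq\overline{F_i\sm\{z_i\}}\supseteq F_i\sm\{z_i\}$ for all $i\le n$. Thus the hypotheses of Theorem \ref{tsad:3} are satisfied (for this $n$, with these $F_i$ and $V_i$). As $n$ was arbitrary, Theorem \ref{tsad:3} yields $\ext(C(K),c_0)\neq 0$.
\end{proof}
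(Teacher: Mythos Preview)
Your proof is correct and follows exactly the paper's approach: the paper's one-line argument is simply that the $F_i$ are pairwise disjoint closed subsets of the normal space $K$, hence can be separated by pairwise disjoint open $V_i$, and then Theorem \ref{tsad:3} applies. Your final proof does precisely this (the detour in your plan through normality of the cofinite subspace $K\setminus\{z_1,\ldots,z_n\}$ was unnecessary, as you yourself realised).
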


\begin{proof} This is a direct consequence of \ref{tsad:3} because pairwise disjoint closed sets can be  separated by pairwise disjoint open sets.
\end{proof}

\begin{corollary}\label{tsad:5} Let $\kappa$ be a cardinal number $\ge \ad_{\omega}$.
If $K$ is a compact space containing a copy of the Cantor cube $2^\kappa$ then  $\ext(C(K),c_0)\neq 0$. In particular, $\ext(C(K),c_0)\neq 0$, provided $2^\con$ embeds into $K$.
\end{corollary}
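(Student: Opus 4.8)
The plan is to deduce the statement from Corollary~\ref{tsad:4}. Since $2^\kappa$ embeds homeomorphically into $K$, it suffices to show that for every $n$ the Cantor cube $2^\kappa$ contains $n$ pairwise disjoint copies of $\bA(\ad_n)$: then the same holds for $K$, and Corollary~\ref{tsad:4} gives $\ext(C(K),c_0)\neq 0$. For the ``in particular'' part one only has to note that $\con\ge\ad_\omega$ (indeed $\ad_\omega\le{\rm non}(\cE)\le\con$), so that $2^\con$ is a cube $2^\kappa$ with $\kappa\ge\ad_\omega$ and the general case applies.

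So I would fix $n$, pick $m$ with $2^m\ge n$, and split $2^\kappa$ according to the values taken on the first $m$ coordinates; this produces $2^m$ pairwise disjoint clopen sets $[s]=\{x\in2^\kappa:x|m=s\}$, $s\in2^m$, each homeomorphic to $2^\kappa$ because $\kappa$ is infinite. It then suffices to check that any copy of $2^\kappa$ contains a copy of $\bA(\ad_n)$, which is clear: since $\kappa\ge\ad_\omega\ge\ad_n$, fix a set $S\sub\kappa$ with $|S|=\ad_n$ and, for $\alpha\in S$, let $e_\alpha\in2^\kappa$ be the point equal to $1$ at the coordinate $\alpha$ and to $0$ elsewhere; then $D=\{e_\alpha:\alpha\in S\}$ is relatively discrete and a brief inspection of basic neighbourhoods shows that $\overline D=D\cup\{\mathbf0\}$, where the zero function $\mathbf0$ is the unique non-isolated point, so that $\overline D$ is homeomorphic to $\bA(\ad_n)$. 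Taking $n$ of the clopen sets $[s]$ and one such copy $\overline D$ inside each of them yields $n$ pairwise disjoint copies of $\bA(\ad_n)$ in $2^\kappa$, as required.

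I do not anticipate any real obstacle: the corollary is a short reduction to Corollary~\ref{tsad:4}. The only things needing a bit of care are the (routine) verification that the canonical copy of $\bA(\lambda)$ sitting inside $2^\lambda$ has exactly one non-isolated point, and the observation that $2^\kappa$ decomposes into arbitrarily many pairwise disjoint clopen copies of itself, which is what makes the copies of $\bA(\ad_n)$ disjoint.
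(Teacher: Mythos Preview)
Your proposal is correct and follows essentially the same route as the paper: the paper invokes Theorem~\ref{tsad:3} (of which Corollary~\ref{tsad:4} is the immediate special case) and simply remarks that $2^\kappa$ contains a pairwise disjoint sequence of copies of $\bA(\kappa)$, which is exactly what your clopen-splitting argument verifies in detail. Your handling of the ``in particular'' clause via $\ad_\omega\le\con$ is also the intended point.
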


\begin{proof}
We can apply Theorem \ref{tsad:3} since it is easy to see that $2^{\kappa}$ contains a pairwise disjoint sequence of copies of
$\bA(\kappa)$ for every infinite $\kappa$.
\end{proof}

Note that the last part of the above corollary was first proved by Correa and Tausk \cite{CT16}.

\begin{corollary}\label{tsad:6}
It is relatively consistent that $\omega_1<\con$ and $\ext(C(2^{\omega_1}),c_0)\neq 0$.
\end{corollary}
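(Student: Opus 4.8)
The plan is to exhibit a forcing extension of ZFC in which $\omega_1<\con$ while $\ad_\omega=\omega_1$, and then to invoke Corollary~\ref{tsad:5} with $\kappa=\omega_1$.

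First I would recall the two-sided estimate for $\ad_\omega$ assembled in Section~\ref{ad}: on the one hand $\ad_\omega\ge\ad_2=\omega_1$ by the classical Luzin construction; on the other hand Lemma~\ref{ad:4} gives $\ad_\omega\le{\rm non}(\cE)\le\min({\rm non}(\cN),{\rm non}(\cM))$. Hence in any model of ZFC in which at least one of ${\rm non}(\cN)$, ${\rm non}(\cM)$ equals $\omega_1$ while $\con>\omega_1$, we automatically get $\omega_1=\ad_\omega<\con$.

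Next I would pick a concrete such model. For instance, adding $\omega_2$ Cohen reals to a model of CH yields $\con=\omega_2$ together with ${\rm non}(\cM)=\omega_1$ (cf.\ \cite{Ba10}); alternatively one may take the random real model, where ${\rm non}(\cN)=\omega_1<\con$. In either case $\ad_\omega=\omega_1<\con$.

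Finally, in such a model the cardinal $\kappa=\omega_1$ satisfies $\kappa\ge\ad_\omega$, so Corollary~\ref{tsad:5} applied to $K=2^{\omega_1}$ gives $\ext(C(2^{\omega_1}),c_0)\neq 0$; combined with $\omega_1<\con$ this is precisely the asserted consistency statement. I do not expect any real obstacle here: the corollary merely packages the cardinal-invariant bound of Lemma~\ref{ad:4} together with the twisted-sum construction of Corollary~\ref{tsad:5}, and the only external input is the (standard) computation of ${\rm non}(\cM)$ or ${\rm non}(\cN)$ in the chosen forcing extension.
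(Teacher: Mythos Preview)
Your argument is correct and follows exactly the route the paper intends: the corollary is stated without proof precisely because the paper has already observed, right after Lemma~\ref{ad:4}, that $\ad_\omega\le\min({\rm non}(\cN),{\rm non}(\cM))$ and hence $\ad_\omega=\omega_1<\con$ is consistent, after which Corollary~\ref{tsad:5} with $\kappa=\omega_1$ finishes the job. Your only addition is naming concrete models (Cohen or random reals), which is a welcome elaboration but not a different approach.
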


Compare this to the fact that $Ext(C(2^{\omega_1}),c_0)=0$ under $MA_{\omega_1}$ \cite{MP18}.

\begin{remark}\label{tsad:7}
Note that in the space $K=2^\kappa$, where $\kappa>\omega$,  for any two copies $F_1,F_2$ of $\bA(\kappa)$
having the same cluster point $z$, the sets $F_1\sm\{z\}, F_2\sm\{z\}$ are not separated.
This may be checked directly or referring to the fact that
every Cantor cube  is an absolute retract for compact zerodimensional spaces.
\end{remark}

We finally discuss compact spaces carrying Radon measures of large Maharam type.
Recall  that a probability Radon measure $\mu$ on $K$ is {\em of (Maharam) type} $\kappa$ if the Banach space $L_1(\mu)$ has density $\kappa$.
A measure is homogeneous if it has the same type when restricted to any set of positive measure.

\begin{remark}\label{tsad:8}
Given  a cardinal number $\kappa$, consider the following properties of a compact space $K$.

\begin{enumerate}[(a)]
\item $K$ contains a copy of the Cantor cube $2^\kappa$.
\item $K$ can be continuously mapped onto $[0,1]^\kappa$.
\item $K$ carries a homogeneous Radon measure of type $\kappa$.
\end{enumerate}

Then the implications $(a)\to(b)\to (c)$ are true in general, for the latter see \cite[531E(d)]{Fr5};
$(b)\to (a)$ does not hold e.g.\ for $K=\beta\omega$ and $\kappa=\con$.
The implication $(c)\to (b)$ holds in ZFC only for some special $\kappa$ such as $\con^+$, see Fremlin \cite[531]{Fr5} and Plebanek
\cite{Pl02} for further information.
Recall in particular that if a compact space carries a homogeneous measure of type $\kappa$ than it carries a homogeneous measure of type $\kappa'$ for every
$\omega\le\kappa'\le\kappa$ (\cite[531E(f)]{Fr5}).
\end{remark}

We shall need the following version of the Riemann-Lebesgue lemma, see e.g.\ Talagrand \cite{Ta}, page 3.

\begin{theorem} \label{RLL}
Let $(T,\Sigma,\mu)$ be any probability measure space and let $(g_n)_n $ be a stochastically independent uniformly bounded sequence of measurable functions $T\lra\er$
such that
$\int_T g_n\;{\rm d}\mu=0$ for every $n$. Then
\[\lim_{n\to\infty} \int_T f\cdot g_n\;{\rm d}\mu=0,\]
for every bounded measurable function $f:T\lra\er$.
\end{theorem}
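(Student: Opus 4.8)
The plan is to prove the Riemann--Lebesgue-type statement (Theorem \ref{RLL}) in two stages: first for indicator functions of measurable sets, and then pass to general bounded measurable $f$ by a standard density and approximation argument. Since $f$ is assumed bounded, say $|f|\le C$ $\mu$-a.e., and the $g_n$ are uniformly bounded, say $|g_n|\le C'$ for all $n$, the products $f\cdot g_n$ are uniformly bounded by $CC'$, so all integrals in sight are finite and the dominated convergence philosophy applies at every step.

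First I would treat the case $f=\chi_A$ for $A\in\Sigma$. Here the point is to exploit independence. For any finite subset of indices, independence of $(g_n)$ together with $\int_T g_n\dd\mu=0$ gives $\int_T g_n g_m\dd\mu = \int_T g_n\dd\mu\cdot\int_T g_m\dd\mu = 0$ for $n\ne m$; more generally the $g_n$ are orthogonal in $L_2(\mu)$ with uniformly bounded $L_2$-norms ($\|g_n\|_2\le C'$). Hence by Bessel's inequality applied to any $h\in L_2(\mu)$ we get $\sum_n |\int_T h\cdot g_n\dd\mu|^2 \le \|h\|_2^2 \cdot (C')^2 < \infty$ (after normalizing, or one argues directly that partial sums $\sum_{n\le N}\langle h,g_n\rangle g_n$ have norm bounded by $\|h\|_2$), so in particular $\int_T h\cdot g_n\dd\mu\to 0$ as $n\to\infty$. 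Taking $h=\chi_A\in L_2(\mu)$ (legitimate because $\mu$ is a probability measure, so $\chi_A$ is square-integrable) settles the indicator case, and by linearity the case of simple functions $f=\sum_{i=1}^k a_i\chi_{A_i}$.

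Finally I would pass to a general bounded measurable $f$. Fix $\eps>0$. Since $f$ is bounded and measurable and $\mu$ is a finite measure, there is a simple function $s$ with $|s|\le C$ and $\int_T |f-s|\dd\mu < \eps/(2C')$ (approximate $f$ from below by simple functions and use dominated convergence, or chop the range of $f$ into finitely many intervals). Then for every $n$,
\[
\left| \int_T f\cdot g_n\dd\mu \right| \le \left| \int_T (f-s)\cdot g_n\dd\mu \right| + \left| \int_T s\cdot g_n\dd\mu \right| \le C'\int_T |f-s|\dd\mu + \left| \int_T s\cdot g_n\dd\mu \right| < \frac{\eps}{2} + \left| \int_T s\cdot g_n\dd\mu \right|.
\]
By the simple-function case the last term tends to $0$, so $\limsup_{n\to\infty} |\int_T f\cdot g_n\dd\mu| \le \eps/2 < \eps$; as $\eps>0$ was arbitrary, the limit is $0$.

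The only genuinely substantive ingredient is the orthogonality-plus-Bessel argument in the indicator step, and that rests squarely on the hypotheses: stochastic independence gives pairwise (indeed mutual) orthogonality of the mean-zero $g_n$ in $L_2(\mu)$, and uniform boundedness together with finiteness of $\mu$ keeps all the relevant functions in $L_2$. I do not expect any real obstacle here; the main thing to be careful about is that Bessel's inequality is being applied to the (non-normalized) orthogonal system $(g_n)$, so one should either normalize or state the bound as $\sum_n \langle h, g_n\rangle^2 \le (C')^2\|h\|_2^2$ and note this forces $\langle h,g_n\rangle\to 0$. Alternatively, one can bypass Bessel entirely and give a direct variance computation: $0\le \int_T (h - \sum_{n=1}^N \langle h,g_n\rangle g_n / \|g_n\|_2^2 )^2\dd\mu$ expanded using orthogonality yields $\sum_{n=1}^N \langle h,g_n\rangle^2/\|g_n\|_2^2 \le \|h\|_2^2$, handling degenerate $\|g_n\|_2=0$ terms trivially since then $\langle h,g_n\rangle=0$ as well.
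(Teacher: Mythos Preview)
Your proof is correct. The paper does not actually prove Theorem~\ref{RLL}; it simply quotes it as a known version of the Riemann--Lebesgue lemma with a reference to Talagrand, so there is no in-paper argument to compare against.

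One remark on economy: your two-stage reduction (indicators $\to$ simple $\to$ bounded measurable) is unnecessary. The core of your argument --- independence plus mean zero gives pairwise orthogonality of the $g_n$ in $L_2(\mu)$, and then Bessel's inequality for the normalized system yields $\sum_n |\langle h,g_n\rangle|^2/\|g_n\|_2^2\le\|h\|_2^2$, hence $\langle h,g_n\rangle\to 0$ --- already applies to \emph{any} $h\in L_2(\mu)$. Since $\mu$ is a probability measure, every bounded measurable $f$ lies in $L_2(\mu)$, so you may take $h=f$ directly and skip the approximation step entirely. Your Bessel bound $\sum_n\langle h,g_n\rangle^2\le (C')^2\|h\|_2^2$ is correct (multiply the normalized Bessel inequality through by $\|g_n\|_2^2\le (C')^2$ termwise), and your handling of the degenerate case $\|g_n\|_2=0$ is fine. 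Note too that your argument uses only pairwise orthogonality, so it in fact proves a slightly stronger statement than the one asserted.
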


\begin{theorem}\label{tsad:9}
If a compact space $K$ carries a homogeneous probability measure $\mu$ of type $\ad_\omega$ then  $\ext(C(K),c_0)\neq 0$.
\end{theorem}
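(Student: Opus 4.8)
The plan is to verify the hypotheses of Proposition \ref{tsad:2} for $X=C(K)$ with the \emph{constant} choices $c_n\equiv 1$ and $d_n\equiv 1/2$, so that $r_n=nd_n/c_n=n/2\to\infty$. A homogeneous measure of large Maharam type furnishes no clopen sets in $K$, so the key device will be to use inner regularity and normality of $K$ to imitate a clopen partition: for each $n$ I will find pairwise disjoint \emph{closed} sets $F_1,\dots,F_n\sub K$ of positive $\mu$-measure, enclosed in pairwise disjoint open sets $O_1,\dots,O_n$, with the measures that witness the Aleksandrov set concentrated on the $F_i$ but the functions that witness condition \ref{tsad:2}(ii) supported in the $O_i$.

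Fix $n$. Since the type $\ad_\omega\ge\ad_2=\omega_1$ is uncountable, $\mu$ is atomless, so $K$ can be partitioned into $n$ Borel sets of positive measure, inside each of which inner regularity of the Radon measure $\mu$ provides a closed set $F_i$ with $\mu(F_i)>0$; normality of $K$ then yields disjoint open $O_i\supseteq F_i$. For each $i$ the restriction $\mu|_{F_i}$ has Maharam type $\ad_\omega\ge\ad_n$, so, writing $\lambda_i=\mu|_{F_i}/\mu(F_i)$, there is a $\lambda_i$-stochastically independent family $(h_\gamma)_{\gamma\in A_i}$ of $\{-1,1\}$-valued measurable functions on $F_i$ with $\int h_\gamma\dd\lambda_i=0$ and $|A_i|=\ad_n$, the index sets $A_i$ being pairwise disjoint. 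Let $\nu_\gamma\in M(K)$ be the measure with density $h_\gamma/\mu(F_i)$ with respect to $\mu|_{F_i}$; then $\|\nu_\gamma\|=1$ and $\nu_\gamma(f)=\int_{F_i}f\,h_\gamma\dd\lambda_i$ for $f\in C(K)$. Put $\Phi_i=\{\nu_\gamma:\gamma\in A_i\}$ and $\Phi=\bigcup_{i\le n}\Phi_i$. The $\Phi_i$ are pairwise disjoint of size $\ad_n$ (within a block $\gamma\ne\delta$ gives $h_\gamma\ne h_\delta$ in $L_1(\lambda_i)$, hence $\nu_\gamma\ne\nu_\delta$; across blocks the supports are disjoint), which is \ref{tsad:2}(iii). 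For \ref{tsad:2}(iv): for $f\in C(K)$ and $\eps>0$ the set $\{\gamma\in A_i:|\nu_\gamma(f)|\ge\eps\}$ is finite, since an infinite sequence $(\gamma_k)$ in it would, via Theorem \ref{RLL} applied to the independent uniformly bounded mean-zero sequence $(h_{\gamma_k})_k$ on $(F_i,\lambda_i)$, force $\nu_{\gamma_k}(f)=\int_{F_i}f\,h_{\gamma_k}\dd\lambda_i\to 0$, a contradiction. Summing over the finitely many blocks, every weak$^\ast$ neighbourhood of $0$ contains all but finitely many $\nu_\gamma$; as $0\notin\Phi$ (each $\|\nu_\gamma\|=1$) and $\Phi$ is infinite inside the weak$^\ast$-compact ball $M_1(K)$, the set $\Phi$ is discrete with $\overline\Phi=\Phi\cup\{0\}$, i.e.\ an Aleksandrov set.

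For the witnesses, fix $\gamma\in A_i$ and, using density of $C(F_i)$ in $L_1(\lambda_i)$ followed by truncation into $[-1,1]$, pick $\tilde f_\gamma\in C(F_i)$ with $\|\tilde f_\gamma\|_\infty\le1$ and $\|\tilde f_\gamma-h_\gamma\|_{L_1(\lambda_i)}<1/4$; extend $\tilde f_\gamma$ by Tietze to $\bar f_\gamma\in C(K)$ with $\|\bar f_\gamma\|_\infty\le1$, multiply it by a Urysohn function equal to $1$ on $F_i$ and to $0$ off $O_i$ with values in $[0,1]$, and call the result $f_\gamma$; set $\Psi_i=\{f_\gamma:\gamma\in A_i\}\sub B_{C(K)}$. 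Since $f_\gamma|_{F_i}=\tilde f_\gamma$ and $|h_\gamma|\equiv1$ on $F_i$ we get $\nu_\gamma(f_\gamma)=\int_{F_i}\tilde f_\gamma h_\gamma\dd\lambda_i\ge 1-\|\tilde f_\gamma-h_\gamma\|_{L_1(\lambda_i)}>1/2=d_n$, which is \ref{tsad:2}(ii); and since each $f_\gamma$ vanishes off $O_i$ and the $O_i$ are pairwise disjoint, for any transversal $f_{\gamma_1},\dots,f_{\gamma_n}$ (with $\gamma_i\in A_i$) at most one summand is nonzero at each point of $K$, so $\bigl\|\sum_{i=1}^n f_{\gamma_i}\bigr\|_\infty\le1=c_n$, which is \ref{tsad:2}(i). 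Proposition \ref{tsad:2} now applies and gives $\ext(C(K),c_0)\ne 0$.

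The only non-routine point is arranging \ref{tsad:2}(i): continuous functions cannot respect a merely measurable partition of $K$, so a naive choice of $\Psi_i$ would yield only $\|\sum f_{\gamma_i}\|_\infty\le n$ and hence a bounded $r_n$. This is circumvented by concentrating each $\nu_\gamma$ on a \emph{closed} set $F_i$ of possibly tiny measure — the normalizing factor $1/\mu(F_i)$ keeping $\nu_\gamma(f_\gamma)$ of order $1$ even though $\|f_\gamma\|_\infty\le1$ — and then pushing the witnessing functions into pairwise disjoint open neighbourhoods of the $F_i$, where they acquire genuinely disjoint supports. One should also double-check the soft measure-theoretic inputs used above (atomlessness of homogeneous measures of uncountable type, existence of independent two-valued families in a measure algebra of type $\ge\ad_n$, and density of $C(F_i)$ in $L_1(\lambda_i)$), all of which are standard.
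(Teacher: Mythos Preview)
Your proof is correct and follows essentially the same route as the paper: build Rademacher-type signed measures from independent families furnished by Maharam's theorem, invoke the Riemann--Lebesgue lemma (Theorem~\ref{RLL}) to see that these measures form an Aleksandrov set accumulating only at $0$, approximate the densities by continuous functions to obtain the witnesses, and arrange disjoint supports so that $c_n=1$, $d_n=1/2$ in Proposition~\ref{tsad:2}. The only cosmetic difference is in how the $n$ disjoint blocks are produced: the paper picks pairwise disjoint \emph{open} sets $U_1,\dots,U_n$ of positive measure and runs the construction with the normalized restriction $\mu_i(\cdot)=\mu(\cdot\cap U_i)/\mu(U_i)$, whereas you first pass to pairwise disjoint \emph{closed} sets $F_i$ and then enclose them in open $O_i$; both devices serve the same purpose of forcing the witnessing functions $f_{\gamma_i}$ to have disjoint supports.
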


\begin{proof}
By the Maharam theorem the measure algebra of $\mu$ is isomorphic to the measure algebra of the usual product measure on $2^{\ad_\omega}$.
This implies that there is a family $\{B_\xi:\xi<\ad_\omega \}$ of
 $\mu$-stochastically independent Borel sets $B_\xi\sub K$ with $\mu(B_\xi)=1/2$.

 Letting for any $B\in Bor(K)$
 \begin{equation}\label{nksi}
 \nu_\xi(B)=\int_{B} h_\xi\; {\rm d}\mu, \mbox{ where } h_\xi= \chi_{B_\xi}-\chi_{K\sm B_\xi},
 \end{equation}
we get  a $weak^*$ discrete family in the unit ball of $C(K)^*$, suitably separated by elements of $C(K)$.
Namely the following holds.
\medskip

\noindent {\sc Claim 1.} Suppose that $\mu(U)=1$ for an open set $U\sub K$ so that we can assume that $B_\xi\sub U$ for every
$\xi<\kappa_0$.
Then there is a family of continuous functions $g_\xi:K\lra [-1,1]$ vanishing outside $U$
such that $\nu_\xi(g_\xi)\ge 3/4$ and $\nu_\xi(g_\eta)\le 1/4$ whenever $\xi,\eta<\kappa_0$, $\xi\neq\eta$.
\medskip

To verify the claim, for a given $B_\xi$  find closed sets $F_\xi\sub B_\xi$ and  $H_\xi\subseteq U\sm B_\xi$
with $\mu(F_\xi\cup H_\xi)>3/4$. Then we take a continuous function
$g_\xi:K\lra [-1,1]$ such that
\[
  g_\xi(x) =
  \begin{cases}
    1, & \text{for } x\in F_\xi, \\
    -1, & \text{for } x\in H_\xi, \\
     0, & \text{for } x\in K\sm U.
  \end{cases}
  \]

 Note that for the measure $\nu_\xi$ defined by \ref{nksi} and for any bounded Borel function $f$ we have
\begin{equation}\label{cov}
\nu_\xi(f)=\int_K f\; {\rm d}\nu_\xi=\int_K h_\xi\cdot f\; {\rm d}\mu.
\end{equation}

Now $h_\eta=g_\eta$ on the set $F_\eta \cup H_\eta$ of measure $\mu$ bigger than $3/4$.
  Hence,   for any $\eta$ and $\xi$ we have
\begin{equation}\label{calc}
\nu_\xi(g_\eta-h_\eta)=\int_K (g_\eta-h_\eta)\cdot h_\xi\;{\rm d}\mu\le 1/4.
\end{equation}

Taking $\eta=\xi$ this gives
 $\nu_\xi(g_\xi)\ge \nu_\xi(h_\xi)- 1/4=1-3/4=3/4$ for every $\xi$.

If $\xi\neq\eta$ then $\nu_\xi(h_\eta)=0$ by \ref{cov} and stochastic  independence of functions $h_\xi$'s.
Hence $\nu_\xi(g_\eta)=\nu_\xi(g_\eta-h_\eta)\le 1/4.$ This completes the proof of Claim.
\medskip

\noindent {\sc Claim 2.}
Let $\Phi=\{\nu_\xi:\xi<\ad_\omega\}$, where $\nu_\xi$ is defined by \ref{nksi}. Then
$\Phi\cup\{0\}$ is homeomorphic to $\bA(\ad_\omega)$.
\medskip

Indeed, it follows directly from Theorem \ref{RLL} that $0$ is the only cluster point of $\Phi$.
 \medskip

To finish  the argument note first that there is a pairwise disjoint sequence of open sets $U_n$ with $\mu(U_n)>0$.
For every $n$ we consider the measure $\mu_n$,
\[ \mu_n(B)=\frac{1}{\mu(U_n)}\cdot \mu(B\cap U_n) \mbox{ for } B\in Bor(K),\]
which is again of type $\ad_\omega$. Using the above construction we define measures $\nu_{n,\xi}$ vanishing outside $U_n$ and continuous functions
$g_{n,\xi}$ such that $g_{n,\xi}=0$ outside $U_n$. In particular $\nu_{n,\xi}(g_{k,\eta})=0$ whenever $k\neq n$.
We can now apply Proposition \ref{tsad:2} for $\Phi_i=\{\nu_{i,\xi}: \xi < \ad_\omega\}$ $\Psi_i=\{g_{i,\xi}: \xi < \ad_\omega\}$  and $c_n=1, d_n=1/2$.
\end{proof}

The next corollary gives a strengthening of Corollary \ref{tsad:5}.

\begin{corollary} Let $\kappa$ be a cardinal number $\ge \ad_{\omega}$.
If $K$ is a compact space that can be continuously mapped onto  $[0,1]^\kappa$ then there is a nontrivial twisted sum of $c_0$ with $C(K)$. In particular, $\ext(C(K),c_0)\neq 0$, provided $[0,1]^\con$ is a continuous image of $K$.
\end{corollary}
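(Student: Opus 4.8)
The plan is to deduce this corollary from Theorem~\ref{tsad:9} by reducing the situation ``$[0,1]^\kappa$ is a continuous image of $K$'' to ``$K$ carries a homogeneous Radon measure of type~$\ad_\omega$''. First I would recall from Remark~\ref{tsad:8} that the implication $(b)\to(c)$ holds in ZFC (the reference given there is \cite[531E(d)]{Fr5}): if $K$ maps continuously onto $[0,1]^\kappa$ via some surjection $\pi$, then pulling back the usual product (Lebesgue) measure $\lambda_\kappa$ on $[0,1]^\kappa$ along $\pi$ produces a Radon probability measure $\mu$ on $K$ whose image under $\pi$ is $\lambda_\kappa$; since $\lambda_\kappa$ is homogeneous of type $\kappa$, the measure $\mu$ has type at least $\kappa$ (the map $f\mapsto f\circ\pi$ embeds $L_1(\lambda_\kappa)$ isometrically into $L_1(\mu)$). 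The measure $\mu$ obtained this way need not itself be homogeneous, but one can pass to a suitable restriction.

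The key step is then: from a Radon probability measure $\mu$ on $K$ of type $\ge\ad_\omega$ we want a \emph{homogeneous} Radon probability measure on $K$ of type \emph{exactly} $\ad_\omega$. By the Maharam classification of measure algebras, the measure algebra of $\mu$ decomposes as a countable direct sum of homogeneous pieces together with an atomic part; since the overall type is $\ge\ad_\omega$, at least one homogeneous component has type some $\kappa'\ge\ad_\omega$. Restricting and renormalizing $\mu$ to the corresponding measurable set $A\subseteq K$ of positive measure yields a homogeneous Radon measure $\mu_A$ on $K$ of type $\kappa'\ge\ad_\omega$. Finally, invoking the last sentence of Remark~\ref{tsad:8} (namely \cite[531E(f)]{Fr5}, that a compact space carrying a homogeneous measure of type $\kappa'$ also carries one of type $\kappa''$ for every $\omega\le\kappa''\le\kappa'$), we get a homogeneous Radon probability measure on $K$ of type precisely $\ad_\omega$. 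Theorem~\ref{tsad:9} now applies and gives $\ext(C(K),c_0)\neq 0$; the ``in particular'' clause is the case $\kappa=\con$, since $\ad_\omega\le\con$.

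I do not expect a serious obstacle here, as this is a packaging of facts already quoted in the paper; the only point requiring a little care is the passage from a general (not necessarily homogeneous) measure to a homogeneous one of the right type, and that is handled exactly by the two Fremlin references $(c)$-related items collected in Remark~\ref{tsad:8}. Writing it out, the proof is short:

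\begin{proof}
Suppose $K$ can be continuously mapped onto $[0,1]^\kappa$ by a surjection $\pi$, where $\kappa\ge\ad_\omega$. Let $\lambda_\kappa$ be the usual product measure on $[0,1]^\kappa$ and let $\mu$ be any Radon probability measure on $K$ with $\pi[\mu]=\lambda_\kappa$; then the isometric embedding $f\mapsto f\circ\pi$ of $L_1(\lambda_\kappa)$ into $L_1(\mu)$ shows that $\mu$ has Maharam type $\ge\kappa\ge\ad_\omega$. By the Maharam theorem the measure algebra of $\mu$ is a countable direct sum of homogeneous algebras (plus possibly an atomic part), so there is a Borel set $A\sub K$ with $\mu(A)>0$ such that the normalized restriction $\mu_A$ of $\mu$ to $A$ is a homogeneous Radon probability measure on $K$ of some type $\kappa'\ge\ad_\omega$. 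By \cite[531E(f)]{Fr5}, $K$ then also carries a homogeneous Radon probability measure of type $\ad_\omega$, so $\ext(C(K),c_0)\neq 0$ by Theorem \ref{tsad:9}. The last assertion is the case $\kappa=\con$, using $\ad_\omega\le\con$.
\end{proof}
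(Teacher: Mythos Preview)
Your approach is correct and is essentially the paper's: reduce to Theorem~\ref{tsad:9} by showing that $K$ carries a homogeneous Radon probability measure of type $\ad_\omega$. The paper's proof is shorter because it invokes Remark~\ref{tsad:8} (i.e.\ \cite[531E(d)]{Fr5}) directly for the implication $(b)\Rightarrow(c)$, obtaining at once a homogeneous measure of type $\kappa$ on $K$, and then appeals to Theorem~\ref{tsad:9}; you instead unpack part of that Fremlin reference by pulling back $\lambda_\kappa$ and applying Maharam's structure theorem.

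One small imprecision is worth flagging. Your inference ``since the overall type is $\ge\ad_\omega$, at least one homogeneous component has type $\ge\ad_\omega$'' is not valid for an \emph{arbitrary} measure of type $\ge\ad_\omega$ when $\ad_\omega$ has countable cofinality (a countable sum of homogeneous pieces with types cofinal in $\ad_\omega$ would be a counterexample). For your particular $\mu$, however, the conclusion does hold: for \emph{every} Borel $A\subseteq K$ with $\mu(A)>0$ the pushforward $\pi[\mu_A]$ is absolutely continuous with respect to $\lambda_\kappa$ and nonzero, hence equivalent to the restriction of $\lambda_\kappa$ to a set of positive measure, hence homogeneous of type $\kappa$; so $L_1(\mu_A)$ has density $\ge\kappa$ and every homogeneous Maharam piece of $\mu$ already has type $\ge\kappa\ge\ad_\omega$. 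With this adjustment your argument is complete.
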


\begin{proof}\label{tsad:10}
The usual product measure $\lambda$ on $[0,1]^\kappa$ is homogeneous of type $\kappa$. If $g:K\lra [0,1]^\kappa$ is a continuous surjection
then  the space $K$ carries a homogeneous measure of type $\kappa$, see  Remark \ref{tsad:8}.
Hence the first assertion follows from Proposition \ref{tsad:9}; for  the second assertion  recall that  $\ad_\omega\le\con$.
\end{proof}


\section{A scattered compactum of height $\omega+1$}\label{ap1}

We mention here an example of a scattered space related to Problem \ref{i:2}.

\begin{example}\label{scattered:4}
There exists a separable, scattered compact space $K$ of height $\omega+1$ and cardinality $\con$ which does not contain any copy of a one point compactification of an uncountable  discrete space.
\end{example}

\begin{proof}
We will construct inductively an increasing sequence $(K_n)$ of scattered locally compact spaces such that for all $n$
\begin{enumerate}[(i)]
\item $K_n$ has height $n$ and $K_n^{(k)} = K_{n}\sm K_k$ for $k=0,1,\dots,n$;
\item $|K_1| = \omega$;
\item $K_{n}\sm K_{n-1}$ is of size $\con$ for $n=2,\ldots$;
\item every point $x$ in $K_{n+1}\sm K_{n}$ has a clopen neighborhood $U$ in $K_{n+1}$ homeomorphic to the ordinal space $\omega^n +1$ and such that $U\cap (K_{n+1}\sm K_{n}) = \{x\}$;
\item every sequence of pairwise distinct points of $K_{n}\sm K_{n-1}$ has a subsequence convergent to a point in $K_{n+1}$.
\end{enumerate}
We start the induction declaring that $K_0=\emptyset$ and $K_1$ is an infinite countable discrete space. Suppose that $n\ge1$ and we have constructed the spaces $K_0,\dots,K_n$ satisfying conditions (i-v). Let $\mathcal{A}_n$ be a maximal almost disjoint family of countable subsets of $K_{n}\sm K_{n-1}$ of size $\con$. For each $A\in \mathcal{A}_n$ we pick a point $p_A$ in such a way that all these points are distinct and they do not belong to $K_n$. We put $K_{n+1}= K_n\cup \{p_a: A\in \mathcal{A}_n\}$ and we define the topology on this set in the following way.
First, we declare that $K_n$ is an open subspace of $K_{n+1}$. Next, we define the neighborhoods of points $p_A$. To this end, fix $A\in \mathcal{A}_n$ and enumerate it as $\{x_k: k\in\omega\}$. For every $k$, take a clopen neighborhood $U_k$ of $x_k$ as in condition (iv). Observe that, by condition (i), $x_k$ in $U_k$ corresponds to the point $\omega^n$ in $\omega^n +1$. Hence, any clopen neighborhood of $x_k$ contained in $U_k$ is again homeomorphic to  $\omega^n +1$. Therefore, refining $U_k$ if necessary, we can assume that they are pairwise disjoint. Now, we define the basic neighborhoods of the point $p_A$ as the sets $V_i= \{p_A\}\cup\bigcup_{k\ge i}U_k$ for $i\in \omega$. One can easily verify that the neighborhood $V_0$ satisfies the requirements from condition (iv); clearly, the condition (i) is also satisfied. Observe that the sequence $(x_k)$ converges to the point $p_A$, therefore the condition (v) follows from the maximality of the family $\mathcal{A}_n$.

Finally, we take $L=\bigcup_{n\in \omega}K_n$, declaring that a set $U$ is open in $L$ if $U\cap K_n$ is open in $K_n$ for any $n$. $L$ is locally compact and $L^{(k)} = L\sm K_k$ for $k\in \omega$, hence $L$ is scattered of height $\omega$.
Let $K$ be the one point compactification of $L$ obtained by adding a point $p$ to $L$. Clearly, $K$ is a separable space of height $\omega+1$ and cardinality $\con$. Suppose that $K$ contains a copy $M$ of a one point compactification of an uncountable discrete space. Since all points of $K$ distinct from $p$ have countable neighborhoods, $p$ must be the only nonisolated point of $M$. Take $n$ such that $S= M\cap(K_{n}\sm K_{n-1})$ is infinite. Then on one hand any sequence of pairwise distinct points of $S$ converges in $M$ to $p$, but on the other hand, by condition (v) it has a subsequence convergent to a point in $K_{n+1}$, a contradiction.
\end{proof}

\begin{remark}\label{nonreflex}
Assuming MA, every closed subset $M$ of the above compact space $L$ is either countable or of size $\con$. Indeed, let $X$ be an uncountable subset of $L$. Then $X_n = X\cap (K_{n}\sm K_{n-1})$ is uncountable for some $n\ge 2$. Let $\mathcal{B}_n = \{A\in\mathcal{A}_n: |A\cap X_n| = \omega \}$. By the maximality of $\mathcal{A}_n$, the family $\mathcal{B}_n$ must be infinite, hence we can pick distinct $A_k \in \mathcal{B}_n$ for $k\in\omega$. Put $Y = \bigcup\{A_k\cap X_n: k\in\omega\}\subset X$ and $\mathcal{C}_n = \{A\in\mathcal{A}_n: |A\cap Y| = \omega \}$. One can easily verify that $\{A\cap Y: A\in\mathcal{C}_n\}$ is an infinite maximal almost disjoint family of subsets of a countable set $Y$ and it is well-known that MA implies that such family has size $\con$. It follows that the closure of $Y$, hence also the closure of $X$, has cardinality $\con$.
\end{remark}

\end{document}